\newcommand\cyr{%
  \renewcommand\rmdefault{wncyr}%
  \renewcommand\sfdefault{wncyss}%
  \renewcommand\encodingdefault{OT2}%
  \normalfont
  \selectfont}
\DeclareTextFontCommand{\textcyr}{\cyr}
\font\co=lcircle10
\def\boxcross{\ \smash{\lower6.5pt\hbox{\rlap{\hskip4.5pt\vrule height13.5pt}}
                \raise0pt\hbox{\rlap{\hskip-2pt \vrule height.4pt depth0pt
                        width13.5pt}}}\hskip12.7pt}
\def\boxelbow{\ \hskip.1pt\smash{%
               \hbox{\co \hskip 5.5pt\rlap{\mathsurround=0pt\rlap{\mathsurround=0pt\char'006}\lower0.4pt\rlap{\char'004}}
                \lower6.5pt\rlap{\hskip-0.2pt\vrule height3pt}
                \raise3.5pt\rlap{\hskip-0.2pt\vrule height3.2pt}}
                \hbox{%
                  \rlap{\hskip-6.4pt \vrule height.4pt depth0pt
width2.5pt}%
                  \rlap{\hskip4.05pt \vrule height.4pt depth0pt
width3.1pt}}}
                \hskip8.7pt}
\newtheorem{Theorem}{Theorem}[section]
\newtheorem{thm}[Theorem]{Theorem}
\newtheorem*{Theorem*}{Theorem}
\newtheorem{Lemma}[Theorem]{Lemma}
\newtheorem{Proposition}[Theorem]{Proposition}
\theoremstyle{definition}
\theoremstyle{remark}
\newtheorem{example}[Theorem]{Example}
\newcommand{\pt}{\mathrm{pt}}
\newcommand{\Z}{\mathbb Z}
\newcommand{\weight}{{\rm wt}}
\newcommand{\fusing}{{\rm fusing}}
\newcommand{\rowspan}{{\rm rowspan}}
\newcommand{\ZZ}{\mathbb{Z}}
\newcommand{\fixit}[1]{\texttt{*** #1 ***}}
\newcommand\defn[1]{{\bf #1}}
\newcommand\iso{\cong}
\newcommand\integers{\ZZ}
\newcommand\calP{{\mathcal P}}
\newcommand\junk[1]{}
\def\lambOneOne{{{
    \begin{tikzpicture}[scale=.1]
    \draw (2,2) -- (0,2) -- (0,0) -- (1,0) -- (1,2);
    \draw (0,1) -- (1,1); 
    \end{tikzpicture}
    }}}
\def\lambOneNaught{{{
    \begin{tikzpicture}[scale=.1]
    \draw (0,1) -- (1,1) -- (1,2) -- (0,2) -- (0,0);
    \end{tikzpicture}
    }}}
\def\lambTwoOneNaughtNaught{{{
    \begin{tikzpicture}[scale=.1]
    \draw (3,4) -- (0,4) -- (0,0); 
    \draw (2,4) -- (2,3) -- (0,3); 
    \draw (1,4) -- (1,2) -- (0,2); 
    \end{tikzpicture}
    }}}
\def\lambTwoOneOneNaught{{{
    \begin{tikzpicture}[scale=.1]
    \draw (3,4) -- (0,4) -- (0,0); 
    \draw (2,4) -- (2,3) -- (0,3); 
    \draw (1,4) -- (1,1) -- (0,1); 
    \draw (1,2) -- (0,2); 
    \end{tikzpicture}
    }}}
\def\lambOneOneOneNaught{{{
    \begin{tikzpicture}[scale=.1]
    \draw (3,4) -- (0,4) -- (0,0); 
    \draw (1,4) -- (1,1) -- (0,1); 
    \draw (1,3) -- (0,3); 
    \draw (1,2) -- (0,2); 
    \end{tikzpicture}
    }}}
\def\lambOneOneNaughtNaught{{{
    \begin{tikzpicture}[scale=.1]
    \draw (3,4) -- (0,4) -- (0,0); 
    \draw (1,4) -- (1,2) -- (0,2); 
    \draw (1,3) -- (0,3); 
    \end{tikzpicture}
    }}}
\newcommand\complexes{{\mathbb C}}
\newcommand\comment[1]{{\bf *** #1 *** }}
\newcommand\Gr[2]{Gr_{#1}(\complexes^{#2})}
\begin{document}

\title{A $K_T$-deformation of the ring of symmetric functions}
\author{Allen Knutson}
\address{Department of Mathematics, Cornell University, Ithaca, NY 14853 USA}
\email{allenk@math.cornell.edu}
\author{Mathias Lederer}
\email{mathias.lederer@uibk.ac.at}
\address{Department of Mathematics \\ 
  University of Innsbruck \\ 
  Technikerstrasse 13 \\
  A-6020 Innsbruck \\ 
  Austria}
\thanks{The first author was supported by NSF grant DMS-0902296.  
  The second author was supported by Marie Curie International
  Outgoing Fellowship 2009-254577 of the EU Seventh Framework
  Program.}  
\date{March 9, 2015}

\begin{abstract}
  The ring of symmetric functions (or a biRees algebra thereof)
  can be implemented in the homology of $\coprod_{a,b} \Gr{a}{a+b}$,
  the multiplicative structure being defined from the ``direct sum'' map. 
  There is a natural circle action (simultaneously on all
  Grassmannians) under which each direct sum map is equivariant. 
  Upon replacing usual homology by equivariant $K$-homology, we obtain
  a $2$-parameter deformation of the ring of symmetric functions.
  
  This ring has a module basis given by Schubert classes $[X^\lambda]$. 
  Geometric considerations show that multiplication of Schubert classes 
  has positive coefficients, in an appropriate sense.
  In this paper we give manifestly positive formul\ae\ for these 
  coefficients: they count numbers of ``DS pipe dreams'' with prescribed 
  edge labelings. \junk{  
  The notion of DS pipe dreams is a generalization of puzzles computing Littlewood-Richardson rules
  due to \fixit{you guys}. 
  Both are tilings of certain regions with prescribed edge labels using certain tiles. 
  However, the original puzzles only used three different types of tiles; 
  DS pipe dreams use no less than 82 of them. 
}
\end{abstract}

\maketitle

{\footnotesize \tableofcontents}

\section{Introduction, and statement of results}

\subsection{The homology ring}

Consider the \defn{direct sum} map on Grassmannians
$$ (V,W) \mapsto V\oplus W, \qquad 
\Gr{a}{a+b} \times \Gr{c}{c+d} \to \Gr{a+c}{a+b+d+c} $$
inducing the map on homology
$$ H_*(\Gr{a}{a+b}) \otimes H_*(\Gr{c}{c+d}) \to H_*(\Gr{a+c}{a+b+d+c}). $$
We can put these maps on homology together to make a trigraded
commutative associative ring:
$$ R^H := \bigoplus_{a,b,*} H_*(\Gr{a}{a+b}) $$
The following is well-known, if not usually stated exactly this way:

\begin{Theorem*}
  Let $Symm$ be the (singly-graded) ring of symmetric functions, 
  with basis of Schur functions indexed by partitions. Let $Symm_{a,b}$ be
  the subspace linearly spanned by the partitions that fit inside a rectangle
  of height $a$ and width $b$. Then the biRees algebra 
  $\bigoplus_{a,b} Symm_{a,b} s^a t^b \leq Symm[s,t]$ is isomorphic to $R^H$,
  where the isomorphism takes each Schur function to the Schubert class
  indexed by the same partition.

  Equivalently, 
  $R^H \bigg/ \left\langle [\Gr{0}{0+1}]-1, [\Gr{1}{1+0}]-1\right\rangle 
  \iso Symm$,
  where the quotient lets one forget the ambient box containing
  the partition.
\end{Theorem*}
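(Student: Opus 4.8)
The plan is to establish the first (biRees) form of the statement, from which the quotient form follows by a short computation. Let $\Phi$ denote the candidate map $\bigoplus_{a,b}Symm_{a,b}\,s^at^b\to R^H$ sending $s_\lambda\,s^at^b$ (for $\lambda\subseteq a\times b$) to the Schubert homology class $[X^\lambda]\in H_*(\Gr{a}{a+b})$, with the convention that $X^\lambda$ is the Schubert variety of complex dimension $|\lambda|$; this is the convention making ``the same partition'' literally correct, the alternative one introducing a complementation. That $\Phi$ is an isomorphism of trigraded $\ZZ$-modules is routine: each $\Gr{a}{a+b}$ has an affine paving by Schubert cells indexed by the partitions inside the $a\times b$ rectangle, so $H_*(\Gr{a}{a+b};\ZZ)$ is free with basis $\{[X^\lambda]\}_{\lambda\subseteq a\times b}$ and lives in even degrees, matching the basis $\{s_\lambda\}_{\lambda\subseteq a\times b}$ of $Symm_{a,b}$; the gradings by $a$, by $b$, and by Schur/homological degree agree on the two sides.

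The real content is that $\Phi$ is a ring homomorphism, i.e.\ that the pushforward $\pi_*\colon H_*(\Gr{a}{a+b})\otimes H_*(\Gr{c}{c+d})\to H_*(\Gr{a+c}{a+b+c+d})$ along the direct-sum map $\pi(V,W)=V\oplus W$ sends $[X^\lambda]\otimes[X^\mu]$ to $\sum_\nu c^\nu_{\lambda\mu}[X^\nu]$, where the $c^\nu_{\lambda\mu}$ are the Littlewood--Richardson coefficients of $s_\lambda s_\mu=\sum_\nu c^\nu_{\lambda\mu}s_\nu$; one checks separately that $c^\nu_{\lambda\mu}\neq 0$ already forces $\nu\subseteq(a+c)\times(b+d)$, so no truncation is lost. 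I would prove this by Poincar\'e duality. The map $\pi$ is a closed embedding: from $U=V\oplus W\subseteq\CC^{a+b}\oplus\CC^{c+d}$ one recovers $V=U\cap\CC^{a+b}$ and $W=U\cap\CC^{c+d}$. Hence $\pi_*$ is the ordinary pushforward of cycles, and it is adjoint to the ring map $\pi^*\colon H^*(\Gr{a+c}{a+b+c+d})\to H^*(\Gr{a}{a+b})\otimes H^*(\Gr{c}{c+d})$ for the perfect pairing between homology and cohomology. Since that pairing makes $\{[X^\lambda]\}$ dual to the cohomology Schubert basis $\{\sigma_\mu\}$, namely $\langle[X^\lambda],\sigma_\mu\rangle=\delta_{\lambda\mu}$, it suffices to compute $\pi^*(\sigma_\nu)$.

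Here the key geometric input is that $\pi$ pulls the tautological sub- and quotient bundles on $\Gr{a+c}{a+b+c+d}$ back to the external direct sums of the corresponding bundles on the two factors, so $\pi^*$ is multiplicative on total Chern classes. Through the standard presentation $H^*(\Gr{a}{a+b})\iso Symm\big/(s_\lambda:\lambda\not\subseteq a\times b)$, with $\sigma_\lambda$ the image of $s_\lambda$, this says precisely that $\pi^*$ is induced by the comultiplication $\Delta\colon Symm\to Symm\otimes Symm$, read modulo the three box-ideals. Because $\Delta(s_\nu)=\sum_{\lambda,\mu}c^\nu_{\lambda\mu}\,s_\lambda\otimes s_\mu$ — comultiplication and multiplication of $Symm$ share the Littlewood--Richardson numbers, $(Symm,\{s_\lambda\})$ being self-dual for the Hall pairing — we get $\pi^*(\sigma_\nu)=\sum c^\nu_{\lambda\mu}\,\sigma_\lambda\otimes\sigma_\mu$, summed over $\lambda\subseteq a\times b$ and $\mu\subseteq c\times d$. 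Dualizing gives $\pi_*([X^\lambda]\otimes[X^\mu])=\sum_\nu c^\nu_{\lambda\mu}[X^\nu]=\Phi(s_\lambda s_\mu\,s^{a+c}t^{b+d})$, so together with the module isomorphism this proves $R^H\iso\bigoplus_{a,b}Symm_{a,b}\,s^at^b$.

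For the quotient form, $\Gr{0}{0+1}$ and $\Gr{1}{1+0}$ are points whose classes correspond under $\Phi$ to $1\cdot s^0t^1=t$ and $1\cdot s^1t^0=s$, so quotienting $R^H$ by $[\Gr{0}{0+1}]-1$ and $[\Gr{1}{1+0}]-1$ amounts to setting $s=t=1$; the biRees algebra then collapses to $\varinjlim_{a,b}Symm_{a,b}=Symm$, under which $[X^\lambda]\mapsto s_\lambda$, yielding the second isomorphism. I expect the main step to be the multiplicativity of $\Phi$ (the second and third paragraphs above), but that its difficulty is bookkeeping rather than mathematics: one must keep straight which Schubert variety is attached to a given partition (hence where the Poincar\'e-duality complement sits), confirm that the statement's ``same partition'' matches the dimension convention, and check that the three box-ideals in the copies of $Symm/(\cdots)$ interact correctly with $\Delta$. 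The geometric ingredients — the affine paving, $\pi$ being a closed embedding, and the behaviour of $\pi^*$ on the tautological bundles — are all standard, which is exactly why the theorem is ``well-known''.
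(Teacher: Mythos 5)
The paper offers no proof of this statement --- it is labeled ``well-known, if not usually stated exactly this way'' and used as motivation --- so there is nothing internal to compare against. Your argument is correct and is the standard way to establish the fact: the affine paving gives the module isomorphism; adjointness of $\pi_*$ and $\pi^*$ under the (sign-free, since everything is even) homology--cohomology pairing, together with $\langle[X^\lambda],\sigma_\mu\rangle=\delta_{\lambda\mu}$, reduces everything to computing $\pi^*$; and the Whitney-sum behaviour of the tautological bundles identifies $\pi^*$ with the comultiplication of $Symm$ read through the three box-quotients, whose structure constants are the Littlewood--Richardson numbers by self-duality of $(Symm,\{s_\lambda\})$. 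Your conventions check out against the paper's: its $X^\lambda$ has dimension equal to the number of inversions of the bit string, which is $|\lambda|$, so ``same partition'' is literal and no complementation intervenes; and your observation that $c^\nu_{\lambda\mu}\neq0$ forces $\nu\subseteq(a+c)\times(b+d)$ is exactly what makes the truncated comultiplication lose nothing. The only point worth making explicit if you write this up is that adjointness alone computes the ring structure --- the closed-embedding property is needed only to identify $\pi_*([X^\lambda]\times[X^\mu])$ with the fundamental class of the direct-sum variety $X^\lambda\oplus X^\mu$, which is the geometric reading the paper builds on later.
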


In particular, the structure constants in these two rings-with-bases
are computable by the same rule, the Littlewood-Richardson rule.

\subsection{A two-parameter deformation}

Since the direct sum map is equivariant with respect to $GL(a+b)\times GL(c+d)$,
we might hope to extend the ring to the direct sum of the {\em equivariant}
homologies. For that to work, we'd need one group acting on all the
Grassmannians, and the only candidate is $T^\infty \times T^\infty$.
Unfortunately, the reindexing\footnote{%
  It is actually possible to keep this huge action, at the cost of
  making the ring be noncommutative and the base ring $H^*_T$ not be central.
  We did not pursue this, though the geometric techniques used in
  this paper should generalize to that case.}
of the coordinates involved in the direct sum cuts this down to a
single circle action:

\begin{Theorem}
  Let $S$ denote a circle acting on each $\complexes^{a+b}$ with
  weight $1$ on the first $b$ coordinates, weight $0$ on the last $a$,
  and thereby on $\Gr{a}{a+b}$. 
  Then the direct sum map is $S$-equivariant,
  and the induced ring structure on 
  $$ R^{H^S} := \bigoplus_{a,b,*} H^S_*(\Gr{a}{a+b}) $$
  is again commutative associative and trigraded, with a basis (now over
  $H^*_S(pt) \iso \integers[t]$) given by Schubert classes.

  There is a further deformation available (spoiling the $*$-grading) 
  to the sum 
  $$ R^{K^S} := \bigoplus_{a,b} K^S_0(\Gr{a}{a+b}) $$
  of the $S$-equivariant $K$-homology groups, again defining a 
  commutative associative ring (now over $K_S(pt) \iso \integers[\exp(\pm t)]$).
\end{Theorem}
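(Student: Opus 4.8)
The plan is to define the product exactly as in the non-equivariant case — external product followed by pushforward along an $S$-equivariant direct-sum map — and then to deduce the ring axioms from functoriality of pushforward together with the standard fact that a connected algebraic group acts trivially on equivariant homology and on equivariant $K$-homology. First I would pin down the reindexing. Each $\CC^{a+b}$ is the sum $\CC^b\oplus\CC^a$ of the weight-$1$ and weight-$0$ eigenspaces of $S$; regrouping the two weight-$1$ blocks and the two weight-$0$ blocks produces a canonical $S$-equivariant linear isomorphism $\CC^{a+b}\oplus\CC^{c+d}\xrightarrow{\ \sim\ }\CC^{a+b+c+d}$ carrying the $S$-weights to the standard ones (weight $1$ on the first $b+d$ coordinates, weight $0$ on the last $a+c$). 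Precomposing the naive assignment $(V,W)\mapsto V\oplus W$ with it gives a morphism $f=f_{a,b,c,d}\colon\Gr{a}{a+b}\times\Gr{c}{c+d}\to\Gr{a+c}{a+b+c+d}$ that is $S$-equivariant by construction (and in fact a closed embedding: it is proper, injective, and has injective differential, using the splitting $\CC^{a+b+c+d}/(V\oplus W)\cong(\CC^{a+b}/V)\oplus(\CC^{c+d}/W)$). As a proper $S$-map it induces pushforwards $f_*$ on $H^S_*$ and on $K^S_0$. Since $S$ lies in the diagonal torus, the Bruhat decomposition of each $\Gr{a}{a+b}$ is an $S$-invariant paving by affine cells on each of which $S$ acts linearly; this yields the equivariant Künneth isomorphisms $H^S_*(X\times Y)\cong H^S_*(X)\otimes_{\ZZ[t]}H^S_*(Y)$ and $K^S_0(X\times Y)\cong K^S_0(X)\otimes_{R(S)}K^S_0(Y)$, and composing with $f_*$ defines the product $\alpha\cdot\beta:=f_*(\alpha\times\beta)$ on $R^{H^S}$ and $R^{K^S}$.

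Next, the ring axioms. The unit is the class of the one-point space $\Gr{0}{0}$, whose direct-sum map is the identity. Commutativity and associativity both reduce to one observation: two a priori different composites of direct-sum maps with the same source and target differ only by an $S$-equivariant \emph{linear} automorphism of the ambient $\CC^N$ — in fact only by a permutation of coordinate blocks \emph{within} the weight-$1$ part and \emph{within} the weight-$0$ part (for commutativity, precomposing $f$ with the factor swap versus swapping $(V,W)$ before summing; for associativity, $f\circ(f\times\mathrm{id})$ versus $f\circ(\mathrm{id}\times f)$). Such a permutation lies in the centralizer of $S$ in $GL_N$, namely $GL_{b+d+\cdots}\times GL_{a+c+\cdots}$, which is connected, so it is $S$-equivariantly homotopic to the identity and acts trivially on $H^S_*(\Gr{a+c}{N})$ and $K^S_0(\Gr{a+c}{N})$. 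Together with functoriality of $f_*$ and associativity/commutativity of the external product, this gives the ring axioms. The $(a,b)$-bigrading is manifestly additive under $f$; on $R^{H^S}$ the homological degree is preserved by $f_*$ and additive under the external product, which is the third grading — necessarily absent from $R^{K^S}$, $K$-homology carrying no such grading.

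Finally, the module bases fall out of the same $S$-invariant affine paving: the usual long exact sequence (resp.\ d\'evissage) argument shows $H^S_*(\Gr{a}{a+b})$ is a free $H^S_*(\pt)\cong\ZZ[t]$-module on the equivariant Schubert classes $[X^\lambda]$ and $K^S_0(\Gr{a}{a+b})$ is a free $K_S(\pt)\cong\ZZ[\exp(\pm t)]$-module on the classes $[\mathcal{O}_{X^\lambda}]$, $\lambda$ running over partitions in the $a\times b$ rectangle. The step I expect to demand the most care is not deep but bookkeeping-heavy: verifying that the permutations witnessing commutativity and associativity really do commute with $S$ (they shuffle only coordinates of a common weight) and really do lie in a connected group, so that $f_*$ cannot detect them, and verifying that the Künneth identifications are valid \emph{equivariantly} — which is exactly where the $S$-invariant affine paving is used. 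It is worth recording that setting $t=0$ collapses both $R^{H^S}$ and $R^{K^S}$ onto the ring $R^H$ of the first theorem, so their structure constants are deformations of the Littlewood-Richardson coefficients; the positivity results of the paper will refine this.
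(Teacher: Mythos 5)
Your proof is correct, and it supplies an argument that the paper itself omits: this theorem is stated in the introduction without proof, the paper's real content being the positive formul\ae\ for the structure constants. Your construction (regroup the weight-$1$ and weight-$0$ blocks to get an $S$-equivariant proper map, push forward, and deduce commutativity/associativity because the discrepancy between the various composites is a block permutation lying in the connected centralizer $GL_{b+d}\times GL_{a+c}$ of $S$, hence acts trivially on $H^S_*$ and $K^S_0$) is exactly the intended one; indeed your centralizer observation is precisely the content of the paper's footnote explaining why retaining the full $T^\infty\times T^\infty$-action would force the ring to be noncommutative, since the required permutation does not centralize the big torus.
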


Since $Symm$ is a polynomial ring (i.e. free), any deformation of its
ring structure must be trivializable. Hence there must exist 
deformations $S_\lambda(t)$ of Schur functions, whose multiplication
has the same structure constants as in $R^{H^S}$. 
We did not succeed in trivializing the family and finding such deformations.
Many other deformations of the ring-with-basis of symmetric functions
have been studied (e.g. Hall-Littlewood polynomials, Jack polynomials, 
Macdonald polynomials) and while it is hard to be exhaustive, the two
deformations studied here seem to be new.

It is easy to show for geometric reasons that the structure
constants of these deformed algebras should be nonnegative in whatever
sense appropriate to the cohomology theory by \cite{Kleiman} for
$R^H$, \cite{Graham} for $R^{H^S}$, \cite{Brion} for $R^K$, and
\cite{AGM} for $R^{K^S}$.  The nontrivial contents of the paper are
appropriately positive formul\ae\ for these structure constants.

We now go into detail about the bases and coefficient rings.
Since coordinates we use in this paper always come in two blocks, 
the first of of size $b$, the second of size $a$, 
we will henceforth be writing $b+a$ rather than $a+b$. 
In particular, the direct sum map is defined on $\Gr{a}{b+a}\times\Gr{c}{d+c}$. 
We will see later why we write its range as $\Gr{a+c}{b+d+c+a}$. 

If $\lambda$ is a bit string with content $0^b 1^a$, let $M$ be an
$a\times(b+a)$ matrix of rank $a$ whose columns are $0$ where $\lambda$ is $0$,
and let $X^\lambda$ denote the \defn{Schubert variety}\footnote{%
  If this paper were based on cohomology it would be more natural
  to call this an {\em opposite} Schubert variety.}
$$ X^\lambda := GL(a) \big\backslash \overline{GL(a)\ M\ B_{b+a}} 
        \quad\subseteq \Gr{a}{b+a} $$
where $B_{b+a}$ is the upper triangular matrices of size $b+a$,
the closure is inside matrices of full rank $a$,
and the identification of $X^\lambda$ with a subvariety of the Grassmannian
is by taking a matrix to its row span.

The dimension of $X^\lambda$ is the number of inversions ($1$s occurring
somewhere before $0$s) in $\lambda$. So $X^\lambda$ is a point when
$\lambda = 000\ldots 0111\ldots 1$, in which case $M$ can be taken to
be the identity matrix in the {\em last} $a$ columns. 

The Schubert classes $[X^\lambda]$ of partitions $\lambda \subseteq a \times b$ 
(the box of height $a$ and width $b$)
form bases of homology modules in various homology theories. 
Four homology theories are of interest to us, 
\begin{itemize}
  \item $H_\star(\Gr{a}{a+b})$ and  $K_0(\Gr{a}{a+b})$, 
    modules over $H_\star(\pt) \iso K_\star(\pt) \iso \Z$, 
  \item $H_\star^S(\Gr{a}{a+b})$, a module over $H_\star^S(\pt) \iso \Z[t]$, and
  \item $K_0^S(\Gr{a}{a+b})$, a module over $K_0^S(\pt) \iso \Z[\exp(\pm t)]$. 
\end{itemize}
Letting both $\lambda$ and their ambient boxes $a \times b$ run, we get bases of the rings $R^H$, $R^{H^S}$ and $R^{K^S}$
as modules over $H_\star(\pt)$, $\Z[t]$ and $\Z[\exp(\pm t)]$, respectively. 

\subsection{Pipe dreams for $R^{H^S}$}

The datum of a partition $\lambda \subseteq a \times b$ translates
into bit string with content $0^b 1^a$ by going from the upper right
corner of $a \times b$ to its lower left corner along the boundary of
$\lambda$ and writing down a $0$ for each horizontal edge and a $1$ for
each vertical edge passed along the way.  Throughout the article, we
move freely back and forth between partitions and bit strings.

Our formul\ae\ for the structure constants of rings-with-bases $R^H$, $R^{H^S}$ and $R^{K^S}$ 
will be sums over certain square tilings, all edges labeled. 
The \defn{lower tiles} are the ones shown in figure \ref{fig:lowerTiles}
with $a,b$ in the set of edge \defn{labels $0,1,R,Q$}, 
and the \defn{upper tiles} are left-right flipped (or upside-down!) versions, 
with the roles of $0$ and $1$ flipped. 
In \cite{K} we had many other letters available as labels but we won't
need them here.

\begin{center}
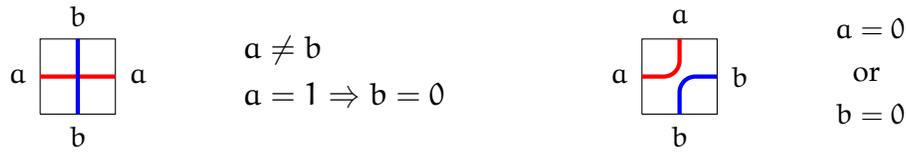
\begin{figure}[ht]
  \begin{tikzpicture}[scale=1]
    \draw (0,0) -- (0,1) -- (1,1) -- (1,0) -- (0,0);
    \foreach \x in {0,1.6}
      \draw (\x-.3,.5) node[] {\small$a$};
    \foreach \x in {0,1.6}
      \draw (.5,\x-.3) node[] {\small$b$};
    \draw[Red,ultra thick,rounded corners=6] (0,.5) -- (1,.5);
    \draw[Blue,ultra thick,rounded corners=6] (.5,0) -- (.5,1);
    \draw (4,.5) node[] {$\begin{aligned} a & \neq b \\ a & = 1 \Rightarrow b = 0\end{aligned}$};
    \draw (8,0) -- (8,1) -- (9,1) -- (9,0) -- (8,0);
    \draw (8-.3,.5) node[] {\small$a$};
    \draw (8-.3+1.6,.5) node[] {\small$b$};
    \draw (8.5,-.3) node[] {\small$b$};
    \draw (8.5,-.3+1.6) node[] {\small$a$};
    \draw[Red,ultra thick,rounded corners=6] (8,.5) -- (8.5,.5) -- (8.5,1);
    \draw[Blue,ultra thick,rounded corners=6] (8.5,0) -- (8.5,.5) -- (9,.5);
    \draw (11,.5) node[] {\small$\begin{aligned} a & = 0 \\ & \text{or} \\ b & = 0\end{aligned}$};
  \end{tikzpicture}
  \caption{Lower tiles, the ``crossing'' and ``elbows''. 
  If $a=b=0$ in an elbows tile, it is the \defn{equivariant} tile.}
  \label{fig:lowerTiles}
\end{figure}
\end{center}

Define a \defn{DS pipe dream} (for direct sum\footnote{%
  Pipe dreams come up in seemingly unrelated contexts, such as those
  in \cite{KM} and \cite{K}, so it seems safest
  to include a modifier. The ones here are of closely related to those
  in \cite{K}, as the proofs will show.})
of partitions $\lambda \subseteq a \times b$ and $\mu \subseteq c \times d$ as a tiling of the region from figure \ref{fig:DSPipeDream}, 
with the following restrictions on the tiles and the labels on the West, South and East boundary: 

\begin{center}
\begin{figure}[ht]
  \begin{tikzpicture}[scale=.6]
  \fill[SkyBlue] (0,9) rectangle (9,4);
  \fill[SkyBlue] (9,4) rectangle (12,0);
  \draw (0,9) -- (0,3) -- (1,3) -- (1,2) -- (2,2) -- (2,1) -- (3,1) -- (3,0) -- (12,0) -- (12,4) -- (13,4) -- (13,5) -- (14,5) -- (14,6) -- (15,6) -- (15,7) -- (16,7) -- (16,8) -- (17,8) -- (17,9) -- (0,9);
  \draw [densely dashed] (4,0) -- (4,9);
  \draw [densely dashed] (12,4) -- (12,9);
  \foreach \x in {0,1,2,3}
    \draw (\x,3.5-\x) node[] {$0$};
  \foreach \x in {0,1,2,3}
    \draw (\x+.5,3-\x) node[] {$R$};
  \foreach \x in {0,1}
    \draw (\x+12.5,4+\x) node[] {$Q$};
  \foreach \x in {0,1,2}
    \draw (\x+14.5,6+\x) node[] {$0$};
  \foreach \x in {0,1,2,3,4}
    \draw (\x+13.1,4.5+\x) node[] {$1$};
  \draw (8,-.8) node[] {$\mu$ in $d$ 0s and $c$ 1s};
  \draw (12.7,2) node[] {$\lambda_1$};
  \draw (-.6,6.5) node[] {$\lambda_2$};
  \draw (2,9.7) node[] {$b$};
  \draw (6.5,9.7) node[] {$d$};
  \draw (10.5,9.7) node[] {$c$};
  \draw (14.5,9.7) node[] {$a$};
    \end{tikzpicture}
  \caption{The region to be filled by DS pipe dreams}
  \label{fig:DSPipeDream}
\end{figure}
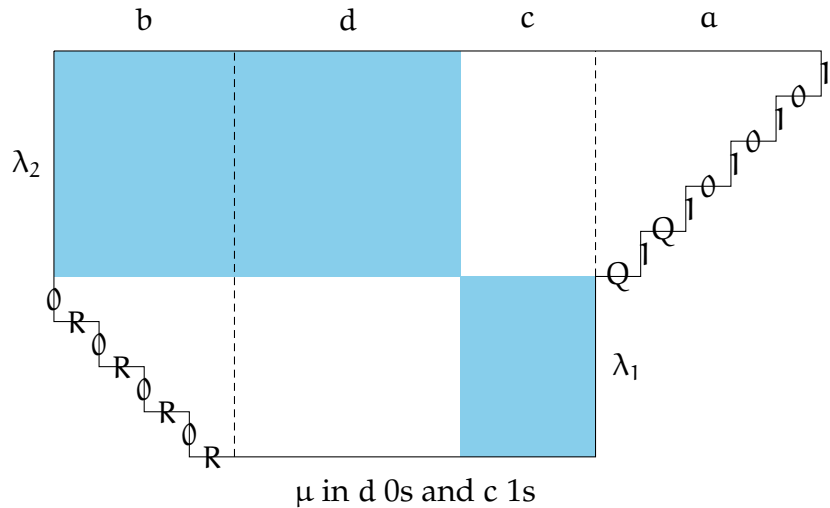
\end{center}

\begin{itemize}
\item The tiles in the lower and upper half are lower and upper tiles,
  respectively. \\ (In fact the boundary conditions ensure that no lower
  tile used will involve $Q$.)
  \item Across the South boundary is the bit string of partition $\mu$
    read left to right, a string of $d$ $0$s and $c$ $1$s.
  \item Write $\lambda$ as a bit string of $b$ $0$s and $a$ $1$s. Then
    \comment{recheck}
    \begin{itemize}
    \item $\lambda_1$ (on the East side) is the first $b$ letters of
      $\lambda$, read bottom to top, with the $0$s turned into $R$s and
      the $1$s turned to $0$s, and
    \item $\lambda_2$ is the last $a$ letters of $\lambda$, read top to bottom, with
      the $0$s turned into $R$s and the $1$s turned to $Q$s
    \end{itemize}
  \item There are as many $Q$s on horizontal edges below the ``$a$'' as there are $Q$s in $\lambda_2$. 
  They are flush left, and the horizontal edges to their right carry 0s. 
  The vertical edges there carry 1s.
  \item Horizontal edges below the ``$b$'' carry $R$s, vertical edges carry 0s.  
  \item Any equivariant tiles only appear in the shaded regions.  
\end{itemize}

We will present a more mnemonic picture explaining the labels of the region at the very end of the paper. 
To a DS pipe dream $P$ of partitions $\lambda \subseteq a \times b$ 
and $\mu \subseteq c \times d$, 
we associate a partition $\nu(P)$ depending on only the North labels. 
The restrictions we impose on the tiles and on the West, South and East labels imply that 
the only letters appearing on the North labels of the region are $0$ and 1. 
This makes for a word of length $b+d+c+a$ in $0$ and 1, which translates into a partition $\nu(P)$.

Define the \defn{$H^S$-weight} $\weight'_{S}(P) \in H_\star^S(\pt) = \Z[t]$
of a DS pipe dream $P$ as $t^{\#\{ \text{equivariant tiles}\}}$.
(This will be the leading term, as $t\to 1$, of the $K^S$-weight coming later.)

\begin{Theorem}
  \begin{enumerate}
  \item   As elements of $H_\star(\Gr{a+c}{b+d+c+a})$, the expansion of $[X^\lambda]\cdot[X^\mu] = [X^\lambda \oplus X^\mu]$ 
  in the $H_\star(\pt)$-basis of Schubert classes is 
  \[
    [X^\lambda \oplus X^\mu] = \sum_{\substack{\text{DS-pipe dreams } P \text{ of partitions } \lambda \text{ and }\mu \\
    \text{ having no equivariant tiles}}} [X^{\nu(P)}]
    = \sum_\nu \# \left\{ \begin{array}{c} \text{DS-pipe dreams } P: \\ \nu(P) = P, \\ P \text{ has no equivariant tiles} \end{array} \right\} [X^\nu].
  \]
  \item  As elements of $H_\star^S(\Gr{a+c}{b+d+c+a})$, the expansion 
  in the $H_\star^S(\pt)$-basis is 
  \[
    [X^\lambda \oplus X^\mu] 
    = \sum_{P \text{ of partitions } \lambda \text{ and }\mu \\
    \text{with no strict $K$-tiles}} \weight'_S(P)\ [X^{\nu(P)}]
  \]
  Specializing $t$ to $0$ recovers the formula from (1).
\end{enumerate}
\end{Theorem}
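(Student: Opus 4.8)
The plan is to deduce both formulas from a single geometric degeneration, exactly in the spirit of \cite{K}. First I would recast the direct sum map $X^\lambda \times X^\mu \mapsto X^\lambda \oplus X^\mu$ as a matrix construction: choose representative matrices $M_\lambda$ of size $a\times(b+a)$ and $M_\mu$ of size $c\times(d+c)$, form the block matrix $\left(\begin{smallmatrix} M_\lambda & 0 \\ 0 & M_\mu\end{smallmatrix}\right)$, and reinterpret the range $\Gr{a+c}{b+d+c+a}$ using the column reordering announced in the text (first $b$ columns, then $d$, then $c$, then $a$). The Schubert variety $X^{\lambda\oplus\mu}$ is the closure of the $GL(a+c)\times B$-orbit of this block matrix. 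The key point is that this closure can be computed by a \emph{Gr\"obner-style degeneration} of the product of Schubert varieties, degenerating the block matrix to a union of coordinate subspaces; the combinatorics of which coordinate subspaces appear, and with what multiplicity, is precisely a count of pipe dreams. I would set this up so that the ``pipes'' entering from the West and South encode the two input partitions $\lambda$ (via $\lambda_1,\lambda_2$) and $\mu$, the pipes exiting the North encode the output $\nu(P)$, and the tiles record the local moves of the degeneration.

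Second I would identify the region and the boundary labels in Figure~\ref{fig:DSPipeDream} with the bookkeeping of this degeneration. The staircase shape of the region reflects the triangularity of $B$; the West labels $\lambda_2$ (with $1\mapsto Q$, $0\mapsto R$) and East labels $\lambda_1$ (with $1\mapsto 0$, $0\mapsto R$) are forced by the two blocks of $M_\lambda$, and the special $Q$/$1$ pattern along the ``$a$'' segment encodes the identification of the last $a$ columns as a split-off summand. The labels $R$ and $Q$ are ``ghost'' strands tracking the two directions in which the ambient boxes interact under $\oplus$; the only labels that can survive to the North boundary are $0$ and $1$, as asserted, because $R$ and $Q$ strands must terminate inside the region (this is a combinatorial lemma about the tile set that I would prove by a strand-tracing argument). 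Each completed tiling is then in bijection with one component of the limit cycle, and the assignment $P\mapsto\nu(P)$ reads off which opposite Schubert cell that component is.

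Third, the weight. The equivariant tile (the $a=b=0$ elbows) is where two strands that ``want'' to cross instead bounce; geometrically this is where the degeneration picks up a factor of the equivariant parameter because a component comes with a nonreduced or thickened structure along the $S$-fixed locus. So $\weight'_S(P)=t^{\#\{\text{equivariant tiles}\}}$ is exactly the multiplicity with which $[X^{\nu(P)}]$ appears, by the standard fact (\cite{Graham}-positivity made explicit) that equivariant multiplicities of a $T$-invariant degeneration are monomials in $t$ with the exponent counting equivariant tiles. Part (1) is then the $t=0$ (ordinary homology) specialization: setting $t=0$ kills every term with an equivariant tile, leaving the naive component count, which is the classical Littlewood--Richardson rule read through this pipe-dream model. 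I would prove part (2) first in full (it contains part (1) on the nose) and then remark that $t\mapsto 0$ gives (1); conversely one can prove (1) directly by a dimension count (the degeneration is to a reduced union of equidimensional cycles when no equivariant tile is allowed) and then bootstrap to (2).

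The main obstacle I anticipate is \emph{flatness and reducedness} of the degeneration together with the precise matching of its components to DS pipe dreams — i.e., showing the initial ideal of the ideal cutting out $\overline{GL(a+c)\cdot(M_\lambda\oplus M_\mu)\cdot B}$ is radical (in the homology case) and that its associated primes are indexed by valid tilings with the stated boundary. Controlling this requires a careful choice of term order adapted to the staircase region and a verification that no ``spurious'' tilings occur; I expect to import the corresponding machinery from \cite{K} and check that the boundary conditions here are exactly the restriction of that more general setup (where extra letters were available) to the letters $0,1,R,Q$. The bijection between surviving strands on the North and the partition $\nu(P)$, and the claim that $R$'s and $Q$'s never reach the North boundary, is the other point needing genuine (though combinatorial rather than geometric) care.
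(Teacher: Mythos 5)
There is a genuine gap, and it is exactly the one the paper flags as its central difficulty. Your plan is to run a single Gr\"obner/Vakil-style degeneration of $\overline{GL(a+c)\cdot(M_\lambda\oplus M_\mu)\cdot B}$, import the machinery of \cite{K} wholesale, and read off the coefficients as tile counts. But the formula of \cite{K} is a $T^n$-equivariant one, and to land in $H^S_\star$ you must specialize the $n$ torus weights to the $S$-weights, which on the relevant window are \emph{nonmonotonic} (after the flips and rotations they read $0^a 1^{b+d} 0^c$). Under that specialization the single-pass formula of \cite{K} is \emph{not} positive: factors $y_j-y_i$ with $i<j$ can specialize to $-t$ as well as to $t$ or $0$. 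This is why the paper does not run one degeneration but three steps: it degenerates only the bottom $b+d+c$ rows (Theorem \ref{thm:bpartial}, where the weights encountered are monotone and the specialization stays positive), then applies Grassmannian duality $D:\Gr{a+c}{n}\to\Gr{b+d}{n}$ to the resulting $a$-sorted interval positroids (Propositions \ref{prop:duality} and \ref{prop:midsortduality}), and only then degenerates the remaining $a$ rows (Proposition \ref{prop:finalarows}). The duality step is not optional bookkeeping: it is what makes the second stage positive, and it is also the reason the upper tiles of a DS pipe dream are left--right flips of the lower tiles with $0\leftrightarrow 1$ exchanged, and the reason equivariant tiles are confined to the shaded regions (outside them the two coordinates carry equal $S$-weight, so the equivariant factor vanishes). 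None of this appears in your proposal, and without it the identification ``coefficient $=t^{\#\{\text{equivariant tiles}\}}$'' cannot be extracted from the degeneration: Graham positivity guarantees nonnegativity abstractly but does not make the $T$-to-$S$ specialization of a nonpositive intermediate formula positive.

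Two smaller points. First, your appeal to ``the standard fact that equivariant multiplicities of a $T$-invariant degeneration are monomials in $t$'' is not a fact; the multiplicities are computed inductively by Theorem \ref{thm:partialKIP}, one row at a time, and the monomial form is an output of that induction under the correct specialization, not an input. Second, your reading of part (1) as the $t=0$ specialization of part (2) is right and matches the paper, as does your observation that only $0$s and $1$s can survive to the North boundary; but the latter is established in the paper by identifying the North labels with a $0$-slice (whose associated positroid variety is a Schubert variety), not by a separate strand-tracing lemma.
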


The first formula in this theorem is of course just another rule for the
standard Littlewood-Richardson number, but we include it to relate it to
the $H^S$-deformation in the second half.

\begin{example}\label{example:H}
  When expanding the class of $X^\lambOneOne \oplus X^\lambOneNaught$ in the basis of Schubert classes, 
  we have to fill the region 
  \begin{center}
  \begin{tikzpicture}[scale=.6]
  \fill[SkyBlue] (0,4) rectangle (4,2);
  \fill[SkyBlue] (4,2) rectangle (5,0);
  \draw (0,4) -- (0,1) -- (1,1) -- (1,0) -- (5,0) -- (5,2) -- (6,2) -- (6,3) -- (7,3) -- (7,4) -- (0,4);
  \foreach \x in {0,1}
    \draw (\x,1.5-\x) node[] {$0$};
  \foreach \x in {0,2}
    \draw (\x+2.5,0) node[] {$1$};
  \foreach \x in {0,1}
    \draw (\x+6.1,2.5+\x) node[] {$1$};
  \foreach \x in {0,1}
    \draw (0.5 + \x,1-\x) node[] {$R$};
  \draw (0,3.5) node[] {$Q$};
  \draw (0,2.5) node[] {$R$};
  \draw (5,0.5) node[] {$R$};
  \draw (5,1.5) node[] {$0$};
  \draw (3.5,0) node[] {$0$};
  \draw (5.5,2) node[] {$Q$};
  \draw (6.5,3) node[] {$0$};
  \end{tikzpicture}
  \end{center}
  with tiles. We find two pipe dreams using no equivariant tiles, 
  \begin{center}
  \begin{tikzpicture}[scale=.6]
  \foreach \xoffset in {0,12}
  {
    \begin{scope}[shift={(\xoffset,0)}]
  \fill[SkyBlue] (0,4) rectangle (4,2);
  \fill[SkyBlue] (4,2) rectangle (5,0);
  \draw (0,4) -- (0,1) -- (1,1) -- (1,0) -- (5,0) -- (5,2) -- (6,2) -- (6,3) -- (7,3) -- (7,4) -- (0,4);
  \foreach \x in {0,1}
    \draw (\x-.2,1.5-\x) node[] {\tiny$0$};
  \foreach \x in {0,2}
    \draw (\x+2.5,-.3) node[] {\tiny$1$};
  \foreach \x in {0,1}
    \draw (\x+6.2,2.5+\x) node[] {\tiny$1$};
  \foreach \x in {0,1}
    \draw (0.5 + \x,1-\x-.3) node[] {\tiny$R$};
  \draw (0-.2,3.5) node[] {\tiny$Q$};
  \draw (0-.2,2.5) node[] {\tiny$R$};
  \draw (5+.2,0.5) node[] {\tiny$R$};
  \draw (5+.2,1.5) node[] {\tiny$0$};
  \draw (3.5,0-.3) node[] {\tiny$0$};
  \draw (5.5,2-.3) node[] {\tiny$Q$};
  \draw (6.5,3-.3) node[] {\tiny$0$};
    \end{scope}
  }
  \draw[ultra thick,rounded corners=6] (4.5,4) -- (4.5,3.5) -- (6.5,3.5) -- (6.5,3);
  \draw[ultra thick,rounded corners=6] (2.5,4) -- (2.5,1.5) -- (1.5,1.5) -- (1.5,.5) -- (1,.5);
  \draw[ultra thick,rounded corners=6] (.5,4) -- (.5,1.5) -- (0,1.5);
  \draw[ultra thick,rounded corners=6] (3.5,0) -- (3.5,1.5) -- (5,1.5);
  \draw[Blue,ultra thick,rounded corners=6] (6.5,4) -- (6.5,3.5) -- (7,3.5);
  \draw[Blue,ultra thick,rounded corners=6] (5.5,4) -- (5.5,2.5) -- (6,2.5);
  \draw[Blue,ultra thick,rounded corners=6] (3.5,4) -- (3.5,3.5) -- (4.5,3.5) -- (4.5,0);
  \draw[Blue,ultra thick,rounded corners=6] (1.5,4) -- (1.5,2.5) -- (3.5,2.5) -- (3.5,1.5) -- (2.5,1.5) -- (2.5,0);
  \draw[Red,ultra thick,rounded corners=6] (0,2.5) -- (1.5,2.5) -- (1.5,1.5) -- (.5,1.5) -- (.5,1);
  \draw[Red,ultra thick,rounded corners=6] (1.5,0) -- (1.5,.5) -- (5,.5); 
  \draw[Purple,ultra thick,rounded corners=6] (0,3.5) -- (3.5,3.5) -- (3.5,2.5) -- (5.5,2.5) -- (5.5,2);
  \foreach \x in {0,2,4}
    \draw (\x+.5,4.3) node[] {\tiny$0$};
  \foreach \x in {1,3,5,6}
    \draw (\x+.5,4.3) node[] {\tiny$1$};
  \draw[ultra thick,rounded corners=6] (17.5,4) -- (17.5,3.5) -- (18.5,3.5) -- (18.5,3);
  \draw[ultra thick,rounded corners=6] (13.5,4) -- (13.5,.5) -- (13,.5);
  \draw[ultra thick,rounded corners=6] (.5+12,4) -- (.5+12,1.5) -- (0+12,1.5);
  \draw[ultra thick,rounded corners=6] (3.5+12,0) -- (3.5+12,1.5) -- (5+12,1.5);
  \draw[Blue,ultra thick,rounded corners=6] (6.5+12,4) -- (6.5+12,3.5) -- (7+12,3.5);
  \draw[Blue,ultra thick,rounded corners=6] (16.5,4) -- (16.5,3.5) -- (17.5,3.5) -- (17.5,2.5) -- (18,2.5);
  \draw[Blue,ultra thick,rounded corners=6] (15.5,4) -- (15.5,2.5) -- (16.5,2.5) -- (16.5,0);
  \draw[Blue,ultra thick,rounded corners=6] (14.5,4) -- (14.5,0);
  \draw[Red,ultra thick,rounded corners=6] (12,2.5) -- (15.5,2.5) -- (15.5,1.5) -- (12.5,1.5) -- (12.5,1);
  \draw[Red,ultra thick,rounded corners=6] (1.5+12,0) -- (1.5+12,.5) -- (5+12,.5); 
  \draw[Purple,ultra thick,rounded corners=6] (12,3.5) -- (16.5,3.5) -- (16.5,2.5) -- (17.5,2.5) -- (17.5,2);
  \foreach \x in {0,1,5}
    \draw (\x+12.5,4.3) node[] {\tiny$0$};
  \foreach \x in {2,3,4,6}
    \draw (\x+12.5,4.3) node[] {\tiny$1$};
  \end{tikzpicture}
  \end{center}  
  which says that 
  \[
    \left[ X^\lambOneOne \oplus X^\lambOneNaught \right] = \left[ X^\lambTwoOneNaughtNaught \right] + \left[ X^\lambOneOneOneNaught \right]
  \]
  as elements of $H_\star(\Gr{4}{7})$. 
  We find one pipe dream using an equivariant tile, a 1-1 elbow at $(4,2)$. 
  \begin{center}
  \begin{tikzpicture}[scale=.6]
  \fill[SkyBlue] (0,4) rectangle (4,2);
  \fill[SkyBlue] (4,2) rectangle (5,0);
  \draw (0,4) -- (0,1) -- (1,1) -- (1,0) -- (5,0) -- (5,2) -- (6,2) -- (6,3) -- (7,3) -- (7,4) -- (0,4);
  \foreach \x in {0,1}
    \draw (\x-.2,1.5-\x) node[] {\tiny$0$};
  \foreach \x in {0,2}
    \draw (\x+2.5,-.3) node[] {\tiny$1$};
  \foreach \x in {0,1}
    \draw (\x+6.2,2.5+\x) node[] {\tiny$1$};
  \foreach \x in {0,1}
    \draw (0.5 + \x,1-\x-.3) node[] {\tiny$R$};
  \draw (0-.2,3.5) node[] {\tiny$Q$};
  \draw (0-.2,2.5) node[] {\tiny$R$};
  \draw (5+.2,0.5) node[] {\tiny$R$};
  \draw (5+.2,1.5) node[] {\tiny$0$};
  \draw (3.5,0-.3) node[] {\tiny$0$};
  \draw (5.5,2-.3) node[] {\tiny$Q$};
  \draw (6.5,3-.3) node[] {\tiny$0$};
  \draw[ultra thick,rounded corners=6] (5.5,4) -- (5.5,3.5) -- (6.5,3.5) -- (6.5,3);
  \draw[ultra thick,rounded corners=6] (2.5,4) -- (2.5,1.5) -- (1.5,1.5) -- (1.5,.5) -- (1,.5);
  \draw[ultra thick,rounded corners=6] (.5,4) -- (.5,1.5) -- (0,1.5);
  \draw[ultra thick,rounded corners=6] (3.5,0) -- (3.5,1.5) -- (5,1.5);
  \draw[Blue,ultra thick,rounded corners=6] (6.5,4) -- (6.5,3.5) -- (7,3.5);
  \draw[Blue,ultra thick,rounded corners=6] (4.5,4) -- (4.5,3.5) -- (5.5,3.5) -- (5.5,2.5) -- (6,2.5);
  \draw[Blue,ultra thick,rounded corners=6] (3.5,4) -- (3.5,2.5) -- (4.5,2.5) -- (4.5,0);
  \draw[Blue,ultra thick,rounded corners=6] (1.5,4) -- (1.5,2.5) -- (3.5,2.5) -- (3.5,1.5) -- (2.5,1.5) -- (2.5,0);
  \draw[Red,ultra thick,rounded corners=6] (0,2.5) -- (1.5,2.5) -- (1.5,1.5) -- (.5,1.5) -- (.5,1);
  \draw[Red,ultra thick,rounded corners=6] (1.5,0) -- (1.5,.5) -- (5,.5); 
  \draw[Purple,ultra thick,rounded corners=6] (0,3.5) -- (4.5,3.5) -- (4.5,2.5) -- (5.5,2.5) -- (5.5,2);
  \foreach \x in {0,2,5}
    \draw (\x+.5,4.3) node[] {\tiny$0$};
  \foreach \x in {1,3,4,6}
    \draw (\x+.5,4.3) node[] {\tiny$1$};
  \end{tikzpicture}
  \end{center}  
  Therefore, 
  \[
    \left[ X^\lambOneOne \oplus X^\lambOneNaught \right] = \left[ X^\lambTwoOneNaughtNaught \right] + \left[ X^\lambOneOneOneNaught \right]
    + t \left[ X^\lambTwoOneOneNaught \right]
  \]
  as elements of $H_\star^S(\Gr{4}{7})$. 
  
  However, since DS pipe dreams are far from symmetric in $\lambda$ and $\mu$, 
  we also compute the class of $X^\lambOneNaught \oplus X^\lambOneOne$ to illustrate commutativity. 
  When expanding the class of $X^\lambOneOne \oplus X^\lambOneNaught$ in the basis of Schubert classes, 
  we have to fill the region 
  \begin{center}
  \begin{tikzpicture}[scale=.6]
  \fill[SkyBlue] (0,3) rectangle (3,1);
  \fill[SkyBlue] (3,1) rectangle (5,0);
  \draw (0,3) -- (0,0) -- (5,0) -- (5,1) -- (6,1) -- (6,2) -- (7,2) -- (7,3) -- (0,3);
  \draw (0,.5) node[] {$0$};
  \draw (0,1.5) node[] {$Q$};
  \draw (0,2.5) node[] {$R$};
  \draw (.5,0) node[] {$R$};
  \foreach \x in {1,4}
    \draw (\x+.5,0) node[] {$0$};
  \foreach \x in {2,3}
    \draw (\x+.5,0) node[] {$1$};
  \draw (5,.5) node[] {$0$};
  \draw (5.5,1) node[] {$Q$};
  \draw (6.5,2) node[] {$0$};
  \foreach \x in {1,2}
    \draw (5.1+\x,.5+\x) node[] {$1$};
  \end{tikzpicture}
  \end{center}
  We find two pipe dreams using no equivariant tiles, 
  \begin{center}
  \begin{tikzpicture}[scale=.6]
  \foreach \xoffset in {0,12}
  {
    \begin{scope}[shift={(\xoffset,0)}]
  \fill[SkyBlue] (0,3) rectangle (3,1);
  \fill[SkyBlue] (3,1) rectangle (5,0);
  \draw (0,3) -- (0,0) -- (5,0) -- (5,1) -- (6,1) -- (6,2) -- (7,2) -- (7,3) -- (0,3);
  \draw (-.2,.5) node[] {\tiny$0$};
  \foreach \x in {0,1}
    \draw (\x+2.5,-.3) node[] {\tiny$1$};
  \foreach \x in {0,1}
    \draw (\x+6.2,1.5+\x) node[] {\tiny$1$};
  \draw (0.5,-.3) node[] {\tiny$R$};
  \draw (0-.2,1.5) node[] {\tiny$Q$};
  \draw (0-.2,2.5) node[] {\tiny$R$};
  \draw (5+.2,0.5) node[] {\tiny$0$};
  \foreach \x in {0,3}
  \draw (1.5 + \x,-.3) node[] {\tiny$0$};
  \draw (5.5,1-.3) node[] {\tiny$Q$};
  \draw (6.5,2-.3) node[] {\tiny$0$};
  \end{scope}
  }
  \draw[ultra thick,rounded corners=6] (.5,3) -- (.5,.5) -- (0,.5);
  \draw[ultra thick,rounded corners=6] (2.5,3) -- (2.5,2.5) -- (3.5,2.5) -- (3.5,.5) -- (1.5,.5) -- (1.5,0);
  \draw[ultra thick,rounded corners=6] (4.5,3) -- (4.5,2.5) -- (6.5,2.5) -- (6.5,2); 
  \draw[ultra thick,rounded corners=6] (5,.5) -- (4.5,.5) -- (4.5,0);
  \draw[Blue,ultra thick,rounded corners=6] (1.5,3) -- (1.5,2.5) -- (2.5,2.5) -- (2.5,0);
  \draw[Blue,ultra thick,rounded corners=6] (3.5,3) -- (3.5,2.5) -- (4.5,2.5) -- (4.5,.5) -- (3.5,.5) -- (3.5,0);
  \draw[Blue,ultra thick,rounded corners=6] (5.5,3) -- (5.5,1.5) -- (6,1.5); 
  \draw[Blue,ultra thick,rounded corners=6] (6.5,3) -- (6.5,2.5) -- (7,2.5); 
  \draw[Red,ultra thick,rounded corners=6] (0,2.5) -- (1.5,2.5) -- (1.5,.5) -- (.5,.5) -- (.5,0);
  \draw[Purple,ultra thick,rounded corners=6] (0,1.5) -- (5.5,1.5) -- (5.5,1);
  \foreach \x in {0,2,4}
    \draw (\x+.5,3.3) node[] {\tiny$0$};
  \foreach \x in {1,3,5,6}
    \draw (\x+.5,3.3) node[] {\tiny$1$};
  \draw[ultra thick,rounded corners=6] (12.5,3) -- (12.5,.5) -- (12,.5);
  \draw[ultra thick,rounded corners=6] (13.5,3) -- (13.5,0);
  \draw[ultra thick,rounded corners=6] (17.5,3) -- (17.5,2.5) -- (18.5,2.5) -- (18.5,2);
  \draw[ultra thick,rounded corners=6] (17,.5) -- (16.5,.5) -- (16.5,0);
  \draw[Blue,ultra thick,rounded corners=6] (14.5,3) -- (14.5,0);
  \draw[Blue,ultra thick,rounded corners=6] (15.5,3) -- (15.5,0);
  \draw[Blue,ultra thick,rounded corners=6] (16.5,3) -- (16.5,2.5) -- (17.5,2.5) -- (17.5,1.5) -- (18,1.5);
  \draw[Blue,ultra thick,rounded corners=6] (18.5,3) -- (18.5,2.5) -- (19,2.5);
  \draw[Red,ultra thick,rounded corners=6] (12,2.5) -- (16.5,2.5) -- (16.5,.5) -- (12.5,.5) -- (12.5,0);
  \draw[Purple,ultra thick,rounded corners=6] (12,1.5) -- (17.5,1.5) -- (17.5,1);
  \foreach \x in {0,1,5}
    \draw (\x+12.5,3.3) node[] {\tiny$0$};
  \foreach \x in {2,3,4,6}
    \draw (\x+12.5,3.3) node[] {\tiny$1$};
  \end{tikzpicture}
  \end{center}
  and one pipe dream using an equivariant tile, a 1-1 elbow at $(5,3)$. 
  \begin{center}
  \begin{tikzpicture}[scale=.6]
  \fill[SkyBlue] (0,3) rectangle (3,1);
  \fill[SkyBlue] (3,1) rectangle (5,0);
  \draw (0,3) -- (0,0) -- (5,0) -- (5,1) -- (6,1) -- (6,2) -- (7,2) -- (7,3) -- (0,3);
  \draw (-.2,.5) node[] {\tiny$0$};
  \foreach \x in {0,1}
    \draw (\x+2.5,-.3) node[] {\tiny$1$};
  \foreach \x in {0,1}
    \draw (\x+6.2,1.5+\x) node[] {\tiny$1$};
  \draw (0.5,-.3) node[] {\tiny$R$};
  \draw (0-.2,1.5) node[] {\tiny$Q$};
  \draw (0-.2,2.5) node[] {\tiny$R$};
  \draw (5+.2,0.5) node[] {\tiny$0$};
  \foreach \x in {0,3}
  \draw (1.5 + \x,-.3) node[] {\tiny$0$};
  \draw (5.5,1-.3) node[] {\tiny$Q$};
  \draw (6.5,2-.3) node[] {\tiny$0$};
  \draw[ultra thick,rounded corners=6] (.5,3) -- (.5,.5) -- (0,.5);
  \draw[ultra thick,rounded corners=6] (2.5,3) -- (2.5,2.5) -- (4.5,2.5) -- (4.5,.5) -- (1.5,.5) -- (1.5,0);
  \draw[ultra thick,rounded corners=6] (5.5,3) -- (5.5,2.5) -- (6.5,2.5) -- (6.5,2);
  \draw[ultra thick,rounded corners=6] (5,.5) -- (4.5,.5) -- (4.5,0);
  \draw[Blue,ultra thick,rounded corners=6] (1.5,3) -- (1.5,2.5) -- (2.5,2.5) -- (2.5,0);
  \draw[Blue,ultra thick,rounded corners=6] (3.5,3) -- (3.5,0);
  \draw[Blue,ultra thick,rounded corners=6] (4.5,3) -- (4.5,2.5) -- (5.5,2.5) -- (5.5,1.5) -- (6,1.5); 
  \draw[Blue,ultra thick,rounded corners=6] (6.5,3) -- (6.5,2.5) -- (7,2.5); 
  \draw[Red,ultra thick,rounded corners=6] (0,2.5) -- (1.5,2.5) -- (1.5,.5) -- (.5,.5) -- (.5,0);
  \draw[Purple,ultra thick,rounded corners=6] (0,1.5) -- (5.5,1.5) -- (5.5,1);
  \foreach \x in {0,2,5}
    \draw (\x+.5,3.3) node[] {\tiny$0$};
  \foreach \x in {1,3,4,6}
    \draw (\x+.5,3.3) node[] {\tiny$1$};
  \end{tikzpicture}
  \end{center}
  The North labels of these pipe dreams are indeed the same 
  as the North labels of the three pipe dreams for $X^\lambOneOne \oplus X^\lambOneNaught$. 
\end{example}

\subsection{Pipe dreams for $R^{K^S}$}

For handling the structure constants of the ring-with-basis $R^{K^S}$, 
we use more general labels $W$ on the vertical edges of tiles. 
Each $W$ is a word in $\{1, R, Q\}$ (no $0$s), no letters repeating, 
and if it contains $1$ then the $1$ must be at the end, 
so $W \in \{ \emptyset, 1, R, Q, RQ, QR, R1, Q1, RQ1, QR1\}$.
There are four kinds of \defn{lower $K$-tiles}, including the
fundamentally new ``displacer'' tile, as shown in figure \ref{fig:lowerKTiles}. 
As in the $H^S$-case, the crossing tile is subject to the restrictions $W \neq b$ and $W = 1 \Rightarrow b = 0$. 
The empty word can only appear in the fusor and in the displacer tile; 
a fusor tile with $W = \emptyset$ is a usual elbow tile as used in DS pipe dreams. 
If a tile has a word with more than one letter,
call it a \defn{strict $K$-tile}.

\begin{center}
\begin{figure}[ht]
  \begin{tikzpicture}[scale=1]
  \foreach \xoffset in {0,4,8,12}
  {
    \begin{scope}[shift={(\xoffset,0)}]
    \draw (0,0) -- (0,1) -- (1,1) -- (1,0) -- (0,0);
  \end{scope}
  }
    \draw (.5,1.8) node[] {\small crossing};
    \foreach \x in {0,1.6}
      \draw (\x-.3,.5) node[] {\small$W$};
    \foreach \x in {0,1.6}
      \draw (.5,\x-.3) node[] {\small$b$};
    \draw[Red,ultra thick] (0,.5) -- (1,.5);
    \draw[Blue,ultra thick] (.5,0) -- (.5,1);
    \draw (4.5,1.8) node[] {\small dot};
    \draw[Blue,ultra thick,rounded corners=6] (4,.5) -- (4.5,.5) -- (4.5,1);
    \draw[Red,ultra thick,rounded corners=6] (4.5,0) -- (4.5,.5) -- (5,.5);
    \draw (4-.3,.5) node[] {\small$0$};
    \draw (4-.3+1.6,.5) node[] {\small$b$};
    \draw (4.5,-.3) node[] {\small$b$};
    \draw (4.5,-.3+1.6) node[] {\small$0$};
  \foreach \xoffset in {8,12}
  {
    \begin{scope}[shift={(\xoffset,0)}]
    \draw[Red,ultra thick,rounded corners=6] (0,.5) -- (.5,.5) -- (.5,1);
    \draw[Blue,ultra thick,rounded corners=6] (.5,0) -- (.5,.5) -- (1,.5);
  \end{scope}
  }
    \draw[ultra thick] (8,.5) -- (8.2,.3);
    \draw (8.5,1.8) node[] {\small fusor};
    \draw (8.2,.17) node[] {\tiny$W$};
    \draw (8-.4,.5) node[] {\small$Wb$};
    \draw (8-.3+1.6,.5) node[] {\small$0$};
    \draw (8.5,-.3) node[] {\small$0$};
    \draw (8.5,-.3+1.6) node[] {\small$b$};
    \draw (12.5,1.8) node[] {\small displacer};
    \draw[Red,ultra thick,rounded corners=6] (12.5,1) -- (12.5,.5) -- (13,.5);
    \draw (12-.4,.5) node[] {\small$Wb$};
    \draw (12-.3+1.8,.5) node[] {\small$Wbc$};
    \draw (12.5,-.3) node[] {\small$c$};
    \draw (12.5,-.3+1.6) node[] {\small$b$};
  \end{tikzpicture}
  \caption{Lower $K$-tiles}
  \label{fig:lowerKTiles}
\end{figure}
\end{center}

There are a total of 53 lower $K$-tiles. 
The following list shows all of them in typical positions where alignment happens. 
The first four lines show honest $K$-tiles, 
the rest the tiles for $H^S$. 
Only two tiles appear more than once in this list. 
\begin{center}
  \begin{tikzpicture}[scale=.7]
  \foreach \xoffset in {0,1}
  {
  \begin{scope}[shift={(\xoffset*2+.5,7)}]
    \draw (0,0) -- (0,1) -- (1,1) -- (1,0) -- (0,0);
  \end{scope}
  }
  \begin{scope}[shift={(.5,7)}]
    \draw[Red,ultra thick,rounded corners=6] (0,.5) -- (.5,.5) -- (.5,1);  
    \draw[Purple,ultra thick,rounded corners=6] (.5,0) -- (.5,.5) -- (1,.5);  
    \draw[Red,ultra thick,rounded corners=6] (1,.5) -- (.5,.5) -- (.5,1);  
    \draw (-.3,.5) node[] {\tiny$R$};
    \draw (.5,1.3) node[] {\tiny$R$};
    \draw (.5,-.3) node[] {\tiny$Q$};
    \draw (1.5,.5) node[] {\tiny$RQ$};
    \draw[Purple,ultra thick,rounded corners=6] (2,.5) -- (2.5,.5) -- (2.5,1);  
    \draw[ultra thick,rounded corners=6] (2.5,0) -- (2.5,.5) -- (3,.5);  
    \draw[Red,ultra thick,rounded corners=6] (2,.5) -- (2.2,.3);  
    \draw (2.5,1.3) node[] {\tiny$Q$};
    \draw (2.5,-.3) node[] {\tiny$0$};
    \draw (3.3,.5) node[] {\tiny$0$};
    \draw (2.2,.17) node[] {\tiny$R$};
  \end{scope}
  \foreach \xoffset in {0,1,2,3,4,5}
  {
  \begin{scope}[shift={(8.5+\xoffset*2,7)}]
    \draw (0,0) -- (0,1) -- (1,1) -- (1,0) -- (0,0);
  \end{scope}
  }
  \begin{scope}[shift={(8.5,7)}]
    \draw[Red,ultra thick,rounded corners=6] (0,.5) -- (.5,.5) -- (.5,1);  
    \draw[Purple,ultra thick,rounded corners=6] (.5,0) -- (.5,.5) -- (1,.5);  
    \draw[Red,ultra thick,rounded corners=6] (1,.5) -- (.5,.5) -- (.5,1);  
    \draw (-.3,.5) node[] {\tiny$R$};
    \draw (.5,1.3) node[] {\tiny$R$};
    \draw (.5,-.3) node[] {\tiny$Q$};
    \foreach \off in {0,1,2,3,4}
      \draw (1.5+\off*2,.5) node[] {\tiny$RQ$};
    \foreach \off in {0,1,2,3}
      \draw[Purple,ultra thick] (2+\off*2,.5) -- (3+\off*2,.5);
    \draw[Purple,ultra thick,rounded corners=6] (10,.5) -- (10.5,.5) -- (10.5,1);  
    \draw[ultra thick,rounded corners=6] (10.5,0) -- (10.5,.5) -- (11,.5);  
    \draw[Red,ultra thick,rounded corners=6] (10,.5) -- (10.2,.3);  
    \draw (10.5,1.3) node[] {\tiny$Q$};
    \draw (10.5,-.3) node[] {\tiny$0$};
    \draw (11.3,.5) node[] {\tiny$0$};
    \draw (10.2,.17) node[] {\tiny$R$};
    \foreach \off in {0}
      {
      \draw[ultra thick] (2.5+\off*10,0) -- (2.5+\off*10,1);
      \draw (2.5+\off*10,-.3) node[] {\tiny$0$};
      \draw (2.5+\off*10,1.3) node[] {\tiny$0$};
      \draw[Blue,ultra thick] (4.5+\off*10,0) -- (4.5+\off*10,1);
      \draw (4.5+\off*10,-.3) node[] {\tiny$1$};
      \draw (4.5+\off*10,1.3) node[] {\tiny$1$};
      \draw[Red,ultra thick] (6.5+\off*10,0) -- (6.5+\off*10,1);
      \draw (6.5+\off*10,-.3) node[] {\tiny$R$};
      \draw (6.5+\off*10,1.3) node[] {\tiny$R$};
      \draw[Purple,ultra thick] (8.5+\off*10,0) -- (8.5+\off*10,1);
      \draw (8.5+\off*10,-.3) node[] {\tiny$Q$};
      \draw (8.5+\off*10,1.3) node[] {\tiny$Q$};
      }
  \end{scope}
  \foreach \xoffset in {0,1}
  {
  \begin{scope}[shift={(\xoffset*2+.5,4.5)}]
    \draw (0,0) -- (0,1) -- (1,1) -- (1,0) -- (0,0);
  \end{scope}
  }
  \begin{scope}[shift={(.5,4.5)}]
    \draw[Purple,ultra thick,rounded corners=6] (0,.5) -- (.5,.5) -- (.5,1);  
    \draw[Red,ultra thick,rounded corners=6] (.5,0) -- (.5,.5) -- (1,.5);  
    \draw[Purple,ultra thick,rounded corners=6] (1,.5) -- (.5,.5) -- (.5,1);  
    \draw (-.3,.5) node[] {\tiny$Q$};
    \draw (.5,1.3) node[] {\tiny$Q$};
    \draw (.5,-.3) node[] {\tiny$R$};
    \draw (1.5,.5) node[] {\tiny$QR$};
    \draw[Red,ultra thick,rounded corners=6] (2,.5) -- (2.5,.5) -- (2.5,1);  
    \draw[ultra thick,rounded corners=6] (2.5,0) -- (2.5,.5) -- (3,.5);  
    \draw[Purple,ultra thick,rounded corners=6] (2,.5) -- (2.2,.3);  
    \draw (2.5,1.3) node[] {\tiny$R$};
    \draw (2.5,-.3) node[] {\tiny$0$};
    \draw (3.3,.5) node[] {\tiny$0$};
    \draw (2.2,.17) node[] {\tiny$Q$};
  \end{scope}
  \foreach \xoffset in {0,1,2,3,4,5}
  {
  \begin{scope}[shift={(8.5+\xoffset*2,4.5)}]
    \draw (0,0) -- (0,1) -- (1,1) -- (1,0) -- (0,0);
  \end{scope}
  }
  \begin{scope}[shift={(8.5,4.5)}]
    \draw[Purple,ultra thick,rounded corners=6] (0,.5) -- (.5,.5) -- (.5,1);  
    \draw[Red,ultra thick,rounded corners=6] (.5,0) -- (.5,.5) -- (1,.5);  
    \draw[Purple,ultra thick,rounded corners=6] (1,.5) -- (.5,.5) -- (.5,1);  
    \draw (-.3,.5) node[] {\tiny$Q$};
    \draw (.5,1.3) node[] {\tiny$Q$};
    \draw (.5,-.3) node[] {\tiny$R$};
    \foreach \off in {0,1,2,3,4}
      \draw (1.5+\off*2,.5) node[] {\tiny$QR$};
    \foreach \off in {0,1,2,3}
      \draw[Red,ultra thick] (2+\off*2,.5) -- (3+\off*2,.5);
    \draw[Red,ultra thick,rounded corners=6] (10,.5) -- (10.5,.5) -- (10.5,1);  
    \draw[ultra thick,rounded corners=6] (10.5,0) -- (10.5,.5) -- (11,.5);  
    \draw[Purple,ultra thick,rounded corners=6] (10,.5) -- (10.2,.3);  
    \draw (10.5,1.3) node[] {\tiny$R$};
    \draw (10.5,-.3) node[] {\tiny$0$};
    \draw (11.3,.5) node[] {\tiny$0$};
    \draw (10.2,.17) node[] {\tiny$Q$};
    \foreach \off in {0}
      {
      \draw[ultra thick] (2.5+\off*10,0) -- (2.5+\off*10,1);
      \draw (2.5+\off*10,-.3) node[] {\tiny$0$};
      \draw (2.5+\off*10,1.3) node[] {\tiny$0$};
      \draw[Blue,ultra thick] (4.5+\off*10,0) -- (4.5+\off*10,1);
      \draw (4.5+\off*10,-.3) node[] {\tiny$1$};
      \draw (4.5+\off*10,1.3) node[] {\tiny$1$};
      \draw[Red,ultra thick] (6.5+\off*10,0) -- (6.5+\off*10,1);
      \draw (6.5+\off*10,-.3) node[] {\tiny$R$};
      \draw (6.5+\off*10,1.3) node[] {\tiny$R$};
      \draw[Purple,ultra thick] (8.5+\off*10,0) -- (8.5+\off*10,1);
      \draw (8.5+\off*10,-.3) node[] {\tiny$Q$};
      \draw (8.5+\off*10,1.3) node[] {\tiny$Q$};
      }
  \end{scope}
  \foreach \xoffset in {0,1,2,3,4,5,6,7,8,9,10}
  {
  \begin{scope}[shift={(\xoffset*2+.5,2)}]
    \draw (0,0) -- (0,1) -- (1,1) -- (1,0) -- (0,0);
  \end{scope}
  }
  \begin{scope}[shift={(.5,2)}]
    \draw[Red,ultra thick,rounded corners=6] (0,.5) -- (.5,.5) -- (.5,1);  
    \draw[Purple,ultra thick,rounded corners=6] (.5,0) -- (.5,.5) -- (1,.5);  
    \draw[Red,ultra thick,rounded corners=6] (1,.5) -- (.5,.5) -- (.5,1);  
    \draw (-.3,.5) node[] {\tiny$R$};
    \draw (.5,1.3) node[] {\tiny$R$};
    \draw (.5,-.3) node[] {\tiny$Q$};
    \foreach \off in {0,1,2,3,4}
      \draw (1.5+\off*2,.5) node[] {\tiny$RQ$};
    \foreach \off in {0,1,2,3}
      \draw[Purple,ultra thick] (2+\off*2,.5) -- (3+\off*2,.5);
    \draw[Purple,ultra thick,rounded corners=6] (10,.5) -- (10.5,.5) -- (10.5,1);  
    \draw[Blue,ultra thick,rounded corners=6] (10.5,0) -- (10.5,.5) -- (11,.5);  
    \draw[Purple,ultra thick,rounded corners=6] (11,.5) -- (10.5,.5) -- (10.5,1);  
    \draw (10.5,1.3) node[] {\tiny$Q$};
    \draw (10.5,-.3) node[] {\tiny$1$};
    \foreach \off in {0,1,2,3,4}
      \draw (11.5+\off*2,.5) node[] {\tiny$RQ1$};
    \foreach \off in {0,1,2,3}
      \draw[Blue,ultra thick] (12+\off*2,.5) -- (13+\off*2,.5);
    \draw[Blue,ultra thick,rounded corners=6] (20,.5) -- (20.5,.5) -- (20.5,1);
    \draw[ultra thick,rounded corners=6] (20.5,0) -- (20.5,.5) -- (21,.5);  
    \draw[Red,ultra thick,rounded corners=6] (20,.5) -- (20.2,.3);
    \draw (20.5,1.3) node[] {\tiny$1$};
    \draw (20.5,-.3) node[] {\tiny$0$};
    \draw (21.3,.5) node[] {\tiny$0$};
    \draw (20.2,.17) node[] {\tiny$RQ$};
    \foreach \off in {0,1}
      {
      \draw[ultra thick] (2.5+\off*10,0) -- (2.5+\off*10,1);
      \draw (2.5+\off*10,-.3) node[] {\tiny$0$};
      \draw (2.5+\off*10,1.3) node[] {\tiny$0$};
      \draw[Blue,ultra thick] (4.5+\off*10,0) -- (4.5+\off*10,1);
      \draw (4.5+\off*10,-.3) node[] {\tiny$1$};
      \draw (4.5+\off*10,1.3) node[] {\tiny$1$};
      \draw[Red,ultra thick] (6.5+\off*10,0) -- (6.5+\off*10,1);
      \draw (6.5+\off*10,-.3) node[] {\tiny$R$};
      \draw (6.5+\off*10,1.3) node[] {\tiny$R$};
      \draw[Purple,ultra thick] (8.5+\off*10,0) -- (8.5+\off*10,1);
      \draw (8.5+\off*10,-.3) node[] {\tiny$Q$};
      \draw (8.5+\off*10,1.3) node[] {\tiny$Q$};
      }
  \end{scope}
  \foreach \xoffset in {0,1,2,3,4,5,6,7,8,9,10}
  {
  \begin{scope}[shift={(\xoffset*2+.5,-.5)}]
    \draw (0,0) -- (0,1) -- (1,1) -- (1,0) -- (0,0);
  \end{scope}
  }
  \begin{scope}[shift={(.5,-.5)}]
    \draw[Purple,ultra thick,rounded corners=6] (0,.5) -- (.5,.5) -- (.5,1);  
    \draw[Red,ultra thick,rounded corners=6] (.5,0) -- (.5,.5) -- (1,.5);  
    \draw[Purple,ultra thick,rounded corners=6] (1,.5) -- (.5,.5) -- (.5,1);  
    \draw (-.3,.5) node[] {\tiny$Q$};
    \draw (.5,1.3) node[] {\tiny$Q$};
    \draw (.5,-.3) node[] {\tiny$R$};
    \foreach \off in {0,1,2,3,4}
      \draw (1.5+\off*2,.5) node[] {\tiny$QR$};
    \foreach \off in {0,1,2,3}
      \draw[Red,ultra thick] (2+\off*2,.5) -- (3+\off*2,.5);
    \draw[Red,ultra thick,rounded corners=6] (10,.5) -- (10.5,.5) -- (10.5,1);  
    \draw[Blue,ultra thick,rounded corners=6] (10.5,0) -- (10.5,.5) -- (11,.5);  
    \draw[Red,ultra thick,rounded corners=6] (11,.5) -- (10.5,.5) -- (10.5,1);  
    \draw (10.5,1.3) node[] {\tiny$Q$};
    \draw (10.5,-.3) node[] {\tiny$1$};
    \foreach \off in {0,1,2,3,4}
      \draw (11.5+\off*2,.5) node[] {\tiny$QR1$};
    \foreach \off in {0,1,2,3}
      \draw[Blue,ultra thick] (12+\off*2,.5) -- (13+\off*2,.5);
    \draw[Blue,ultra thick,rounded corners=6] (20,.5) -- (20.5,.5) -- (20.5,1);
    \draw[ultra thick,rounded corners=6] (20.5,0) -- (20.5,.5) -- (21,.5);  
    \draw[Purple,ultra thick,rounded corners=6] (20,.5) -- (20.2,.3);
    \draw (20.5,1.3) node[] {\tiny$1$};
    \draw (20.5,-.3) node[] {\tiny$0$};
    \draw (21.3,.5) node[] {\tiny$0$};
    \draw (20.2,.17) node[] {\tiny$QR$};
    \foreach \off in {0,1}
      {
      \draw[ultra thick] (2.5+\off*10,0) -- (2.5+\off*10,1);
      \draw (2.5+\off*10,-.3) node[] {\tiny$0$};
      \draw (2.5+\off*10,1.3) node[] {\tiny$0$};
      \draw[Blue,ultra thick] (4.5+\off*10,0) -- (4.5+\off*10,1);
      \draw (4.5+\off*10,-.3) node[] {\tiny$1$};
      \draw (4.5+\off*10,1.3) node[] {\tiny$1$};
      \draw[Red,ultra thick] (6.5+\off*10,0) -- (6.5+\off*10,1);
      \draw (6.5+\off*10,-.3) node[] {\tiny$R$};
      \draw (6.5+\off*10,1.3) node[] {\tiny$R$};
      \draw[Purple,ultra thick] (8.5+\off*10,0) -- (8.5+\off*10,1);
      \draw (8.5+\off*10,-.3) node[] {\tiny$Q$};
      \draw (8.5+\off*10,1.3) node[] {\tiny$Q$};
      }
  \end{scope}
  \begin{scope}[shift={(2,-3)}]
    \draw (0,0) -- (0,1) -- (1,1) -- (1,0) -- (0,0);
  \end{scope}
  \foreach \xoffset in {0,1}
  {
  \begin{scope}[shift={(8+\xoffset*7.5,-3)}]
    \draw (0,0) -- (0,1) -- (1,1) -- (1,0) -- (0,0);
  \end{scope}
  }
  \foreach \xoffset in {0,1,2,3,4,5,6,7,8,9,10,11,12,13}
  {
  \begin{scope}[shift={(.5+\xoffset*1.5,-4.5)}]
    \draw (0,0) -- (0,1) -- (1,1) -- (1,0) -- (0,0);
  \end{scope}
  }
  \draw (2.5,-1.75) node[] {\tiny$0$};
  \foreach \off in {0,6,11}
    {
    \draw (1+\off*1.5,-3.25) node[] {\tiny$0$};
    \draw (1+\off*1.5,-4.75) node[] {\tiny$0$};
    }
  \foreach \off in {0,8}
    \draw (2.5+\off*1.5,-4.75) node[] {\tiny$0$};
  \foreach \off in {0,5}
    \draw (8.5+\off*1.5,-3.25) node[] {\tiny$0$};
  \draw (8.5,-1.75) node[] {\tiny$0$};
  \foreach \off in {0,5,10}
    {
    \draw (4+\off*1.5,-3.25) node[] {\tiny$1$};
    \draw (4+\off*1.5,-4.75) node[] {\tiny$1$};
    }
  \draw (2.5,-3.25) node[] {\tiny$1$};
  \foreach \off in {0,10}
    {
    \draw (5.5+\off*1.5,-3.25) node[] {\tiny$R$};
    \draw (5.5+\off*1.5,-4.75) node[] {\tiny$R$};
    }
  \draw (8.5,-4.75) node[] {\tiny$R$};
  \draw (14.5,-3.25) node[] {\tiny$R$};
  \foreach \off in {0,4}
    {
    \draw (7+\off*1.5,-3.25) node[] {\tiny$Q$};
    \draw (7+\off*1.5,-4.75) node[] {\tiny$Q$};
    }
  \foreach \off in {0,3}
    \draw (16,-4.75+\off) node[] {\tiny$Q$};
  \foreach \off in {0,4,5,10}
    \draw (1.75+\off*1.5,-2.5) node[] {\tiny$0$};
  \foreach \off in {0,1,2,3,8}
    \draw (3.25+\off*1.5,-4) node[] {\tiny$0$};
  \draw (3.25,-2.5) node[] {\tiny$1$};
  \foreach \off in {0,1}
    \draw (.25+\off*1.5,-4) node[] {\tiny$1$};
  \foreach \off in {0,1,2,3}
    \draw (9.25+\off*1.5,-4) node[] {\tiny$R$};
  \foreach \off in {0,1,2,3}
    \draw (16.75+\off*1.5,-4) node[] {\tiny$Q$};
  \draw (15.25,-2.5) node[] {\tiny$Q$};
  \foreach \off in {0,5}
    \draw[ultra thick,rounded corners=6] (8.5+\off*1.5,-3.5) -- (8.5+\off*1.5,-4) -- (8+\off*1.5,-4);
  \draw[Red,ultra thick,rounded corners=6] (8.5,-4.5) -- (8.5,-4) -- (9,-4);
  \foreach \off in {0,1,2}
    \draw[Red,ultra thick] (9.5+\off*1.5,-4) -- (10.5+\off*1.5,-4);
  \draw[Red,ultra thick,rounded corners=6] (14.5,-3.5) -- (14.5,-4) -- (14,-4);
  \draw[ultra thick,rounded corners=6] (14.5,-4.5) -- (14.5,-4) -- (15,-4);
  \draw[ultra thick,rounded corners=6] (8.5,-2) -- (8.5,-2.5) -- (8,-2.5);
  \draw[ultra thick,rounded corners=6] (8.5,-3) -- (8.5,-2.5) -- (9,-2.5);
  \draw[Purple,ultra thick,rounded corners=6] (16,-2) -- (16,-2.5) -- (15.5,-2.5);
  \draw[ultra thick,rounded corners=6] (16,-3) -- (16,-2.5) -- (16.5,-2.5);
  \draw[Purple,ultra thick,rounded corners=6] (16,-4.5) -- (16,-4) -- (16.5,-4);
  \draw[Blue,ultra thick] (1.5,-4) -- (.5,-4);
  \foreach \off in {0,1,2}
    \draw[Purple,ultra thick] (17+\off*1.5,-4) -- (18+\off*1.5,-4);
  \foreach \off in {0,1,2}
    \draw[ultra thick] (3.5+\off*1.5,-4) -- (4.5+\off*1.5,-4);
  \foreach \off in {0,9,16.5}
    \draw[ultra thick] (1+\off,-3.5) -- (1+\off,-4.5);
  \foreach \off in {0,7.5,15}
    \draw[Blue,ultra thick] (4+\off,-3.5) -- (4+\off,-4.5);
  \foreach \off in {0,15}
    \draw[Red,ultra thick] (5.5+\off,-3.5) -- (5.5+\off,-4.5);
  \foreach \off in {0,6}
    \draw[Purple,ultra thick] (7+\off,-3.5) -- (7+\off,-4.5);
  \draw[Blue,ultra thick,rounded corners=6] (3,-2.5) -- (2.5,-2.5) -- (2.5,-3);
  \draw[ultra thick,rounded corners=6] (3,-2.5-1.5) -- (2.5,-2.5-1.5) -- (2.5,-3-1.5);
  \draw[ultra thick,rounded corners=6] (2.5,-2) -- (2.5,-2.5) -- (2,-2.5);
  \draw[Blue,ultra thick,rounded corners=6] (2.5,-2-1*1.5) -- (2.5,-2.5-1*1.5) -- (2,-2.5-1*1.5);
  \end{tikzpicture}
\end{center}

The \defn{upper $K$-tiles} are the left-right (not up-down, any more!)
flipped versions of these tiles, with the roles of $0$ and $1$
exchanged.  In particular, no word $W$ properly contains $1$, and if
it contains $0$ then the $0$ must be at the end.
There are a total of 82 upper and lower $K$-tiles. 

Define a \defn{$K$-DS pipe dream} 
of partitions $\lambda \subseteq a \times b$ and $\mu \subseteq c \times d$ 
as a tiling of the region from the previous subsection, with the same 
restrictions on the tiles and the boundary labels as in the previous subsection, but using all types of lower and upper $K$-tiles, respectively. 

Also in this setting, the only letters appearing on the North labels
of the region are $0$ and $1$, which make for a bit string 
and hence partition $\nu(P)$.

Define the \defn{$S$-weight} of a $K$-DS pipe dream $P$ as 
\[
  \weight_S(P) := \prod_{\text{tiles } t }\begin{cases}
    1 - \exp(t) & \text{if $t$ is the equivariant tile (all $0$ labels)}  \\
    \exp(t) & \text{if $t$ is a fusor tile,} \\
    1 &  \text{otherwise}
  \end{cases}
\]
in $K_0^S(\pt) = \Z[\exp(\pm t)]$. (Actually it is natural to define the
weight of an equivariant tile in an illegal location to be $0$.)

Finally, define $\fusing(P) := |\nu| - |\lambda| - |\mu| + \#\{\text{equivariant tiles}\}$.
Equivalently, $\fusing(P)$ is the sum of the sizes of words $W$ appearing in fusor tiles of $P$. 
(So $\fusing(P) = 0$ iff the $K$-DS pipe dream $P$ is an ordinary DS pipe dream, 
since the presence of a displacer tile forces the appearance of a fusor tile with $|W| > 0$ to the East of it.)
We will say more about the two equivalent definitions of $\fusing(P)$ later.

\begin{thm}
\label{thm:PSPipeDreams}
As elements of $K_0^S(\Gr{a+c}{a+b+d+c})$, the expansion 
  in the $K_0^S(\pt)$-basis is 
  \[
    [X^\lambda \oplus X^\mu] = \sum_{\text{$K$-DS-pipe dreams }P \text{ of partitions } \lambda \text{ and }\mu} 
    (-1)^{\fusing(P)} \weight_K(P) [X^{\nu(P)}].
  \]
\end{thm}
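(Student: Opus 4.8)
The plan is to compute the $S$-equivariant $K$-homology class $[X^\lambda\oplus X^\mu]$ directly, as the terminal output of a sequence of $S$-equivariant flat degenerations whose combinatorial record is precisely a $K$-DS pipe dream. First I would make the direct sum geometry explicit, with the coordinate blocks in the order $b,d,c,a$ used in the statement: realize $X^\lambda\oplus X^\mu \subseteq \Gr{a+c}{b+d+c+a}$ as the closure of the locus of row spans of block matrices built (with the appropriate interleaving of columns) from matrices $M_\lambda$ and $M_\mu$ representing $X^\lambda$ and $X^\mu$, and observe that this is an ``interval rank variety'', cut out by rank conditions on windows of consecutive columns. This is exactly the class of variety for which pipe dreams compute $K$-classes in \cite{K}; the staircase region of Figure~\ref{fig:DSPipeDream}, together with the West, South and East boundary words read off from $\lambda$ and $\mu$ (the $R$'s, $Q$'s, $0$'s and $1$'s, including the flush-left $Q$-block under the ``$a$''), is nothing but the bookkeeping device that records those rank conditions and the way they will propagate.

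The heart of the argument is to run the sweeping degeneration $\sweep$ one box of the region at a time. Each step is an $S$-equivariant Gr\"obner/torus degeneration of the current variety inside an $\AA^1$-family; flatness keeps the $S$-equivariant $K$-class fixed, so it suffices to analyze the special fibre locally, box by box. The content is then a finite local classification: given the edge labels already determined on the West and South of a box, the special fibre of the corresponding degeneration step is a possibly non-reduced, possibly reducible scheme whose irreducible components and embedded structure are enumerated by the admissible tiles fitting those labels, and the labels those tiles carry on their North and East edges record the rank conditions of the next stage. I would prove that the complete list of local outcomes is exactly the $82$ upper and lower $K$-tiles --- crossing, dot, fusor, and the genuinely new ``displacer'' --- and that the $S$-weight bookkeeping works out: a fusor tile records the creation of a rank drop along the sweep direction, hence contributes a factor $\exp(t)$ from the $S$-weight of that normal direction; an equivariant tile records that the sweep has pinned a coordinate to its $S$-fixed value, contributing $1-\exp(t)$, the $S$-equivariant class of a point inside a weight-one line; and the overall sign $(-1)^{\fusing(P)}$ arises from inclusion--exclusion over the non-reduced and embedded parts of the special fibres, exactly as in the $K$-theoretic geometric vertex decomposition of Knutson and Miller \cite{KM}. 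Summing the contributions $(-1)^{\fusing(P)}\weight_S(P)\,[X^{\nu(P)}]$ over all tilings of the region then telescopes through the whole sequence of degenerations to $[X^\lambda\oplus X^\mu]$; specializing $t\to 0$, equivalently discarding all strict $K$-tiles and all equivariant tiles, recovers the $H^S$-formula of the preceding theorem and, on further setting $t=0$ there, the ordinary Littlewood-Richardson rule.

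The induction is on the number of boxes of the staircase still to be swept. The base case is the fully-swept region, whose North word is already an honest bit string and thus names a single Schubert class. The inductive step peels off the next box to be swept, applies the local tile classification, and checks that the label conventions forced on the region --- the $Q$'s flush left under the ``$a$'' with $0$'s to their right and $1$'s on the vertical edges there, the $R$'s under the ``$b$'', and equivariant tiles confined to the shaded triangles --- are precisely what make the rank conditions propagate consistently into the smaller region. The two stated descriptions of $\fusing(P)$, via $|\nu|-|\lambda|-|\mu|+\#\{\text{equivariant tiles}\}$ and via the total length of the words $W$ appearing on fusor tiles, are reconciled along the way.

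I expect the main obstacle to be precisely this local classification and its $S$-equivariant refinement: proving that no local behaviour other than the listed tiles can occur, that each occurs with the stated multiplicity and $S$-weight, and --- most delicately --- accounting correctly in $S$-equivariant $K$-theory for the non-reduced special fibres, since that is where the signs $(-1)^{\fusing}$ and the extended vertical-edge alphabet $\{\emptyset,1,R,Q,RQ,QR,R1,Q1,RQ1,QR1\}$, tracking how many rank drops have been ``displaced'' across a given edge, are genuinely forced. The plain non-equivariant $K$-statement can be obtained without the displacer tile; it is the $S$-refinement that requires it, and verifying that the displacer's labels mesh consistently with those of its neighbours in every admissible configuration is where the bulk of the casework lies.
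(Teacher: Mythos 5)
There is a genuine gap, and it is exactly the obstacle the paper is organized around. Your plan is to sweep the entire staircase region in a single sequence of degenerations of one variety in one Grassmannian. But $X^\lambda\oplus X^\mu$ is not an interval rank variety in the column order you need: as Proposition \ref{prop:bigStartingMatrix} shows, one must first flip $\lambda$'s columns to get a positroid variety $\Pi_\sigma=\overleftarrow{X}^\lambda\oplus X^\mu$, and after the further rotation that makes it interval (indeed Richardson), the circle $S$ acts with weights $0^a1^{b+d}0^c$ --- \emph{non-monotonically} in the column order. A single monotone sweep is precisely an application of the $K^T$-formula of \cite{K} (theorem \ref{thm:partialKIP}) from $j=n$ down to $i=0$, and the paper explicitly notes that specializing that formula from $K^T$ to $K^S$ under this non-monotonic weight assignment is \emph{not} positive in the sense of \cite{AGM} and does not produce the stated tiles and weights. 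This is why the argument must stop the sweep at row $a$ (theorem \ref{thm:bpartial}, giving the lower halves), pass through Grassmannian duality at mid-sort (\S\ref{ssec:midsort}, propositions \ref{prop:duality} and \ref{prop:midsortduality}) to land in an $a$-sorted situation in $\Gr{b+d}{b+d+c+a}$ with a compatible $S$-action, and only then sweep the remaining $a$ rows (proposition \ref{prop:finalarows}). The fact that the upper tiles of a DS pipe dream are left--right flips of the lower tiles with $0\leftrightarrow 1$ exchanged is the combinatorial fingerprint of this duality step: the two halves of the region are tiled in two \emph{different} Grassmannians. Your single-sweep plan would produce lower tiles throughout and the wrong (non-positive) $S$-weights, so it cannot terminate in the theorem as stated.

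A secondary point: you propose to re-derive the local classification of the $82$ tiles, the fusor/displacer weights, and the inclusion--exclusion sign from scratch via Gr\"obner degeneration. The paper does none of this; it imports theorem \ref{thm:partialKIP} from \cite{K} as a black box (twice), and its genuinely new content is the setup of the direct sum variety as a positroid variety, the duality trick, and the bookkeeping of where equivariant tiles may appear (they are confined to the shaded regions because outside them the factor $1-\exp(y_j-y_i)$ specializes to $1-\exp(0)=0$ under the $T\to S$ restriction, not because of a local geometric classification). Re-proving the local classification is not wrong in principle, but it does not address the structural problem above, and it is where you yourself locate ``the bulk of the casework'' --- casework the intended argument avoids entirely.
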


\begin{example}
  When expanding the equivariant $K$-class of $X^\lambOneOne \oplus X^\lambOneNaught$ in the basis of Schubert classes, 
  we find the DS pipe dreams from example \ref{example:H}, 
  and in addition, two DS pipe dreams containing $K$-tiles. 
  \begin{center}
  \begin{tikzpicture}[scale=.6]
  \foreach \xoffset in {0,12}
  {
    \begin{scope}[shift={(\xoffset,0)}]
  \fill[SkyBlue] (0,4) rectangle (4,2);
  \fill[SkyBlue] (4,2) rectangle (5,0);
  \draw (0,4) -- (0,1) -- (1,1) -- (1,0) -- (5,0) -- (5,2) -- (6,2) -- (6,3) -- (7,3) -- (7,4) -- (0,4);
  \foreach \x in {0,1}
    \draw (\x-.2,1.5-\x) node[] {\tiny$0$};
  \foreach \x in {0,2}
    \draw (\x+2.5,-.3) node[] {\tiny$1$};
  \foreach \x in {0,1}
    \draw (\x+6.2,2.5+\x) node[] {\tiny$1$};
  \foreach \x in {0,1}
    \draw (0.5 + \x,1-\x-.3) node[] {\tiny$R$};
  \draw (0-.2,3.5) node[] {\tiny$Q$};
  \draw (0-.2,2.5) node[] {\tiny$R$};
  \draw (5+.2,0.5) node[] {\tiny$R$};
  \draw (5+.2,1.5) node[] {\tiny$0$};
  \draw (3.5,0-.3) node[] {\tiny$0$};
  \draw (5.5,2-.3) node[] {\tiny$Q$};
  \draw (6.5,3-.3) node[] {\tiny$0$};
    \end{scope}
  }
  \draw[ultra thick,rounded corners=6] (4.5,4) -- (4.5,3.5) -- (6.5,3.5) -- (6.5,3);
  \draw[ultra thick,rounded corners=6] (1.5,4) -- (1.5,.5) -- (1,.5);
  \draw[ultra thick,rounded corners=6] (.5,4) -- (.5,1.5) -- (0,1.5);
  \draw[ultra thick,rounded corners=6] (3.5,0) -- (3.5,1.5) -- (5,1.5);
  \draw[Blue,ultra thick,rounded corners=6] (6.5,4) -- (6.5,3.5) -- (7,3.5);
  \draw[Blue,ultra thick,rounded corners=6] (5.5,4) -- (5.5,2.5) -- (6,2.5);
  \draw[Blue,ultra thick,rounded corners=6] (3.5,4) -- (3.5,3.5) -- (4.5,3.5) -- (4.5,0);
  \draw[Blue,ultra thick,rounded corners=6] (2.5,4) -- (2.5,2.5) -- (3.5,2.5) -- (3.5,1.5) -- (2.5,1.5) -- (2.5,0);
  \draw[Purple,ultra thick,rounded corners=6] (0,3.5) -- (3.5,3.5) -- (3.5,2.5) -- (5.5,2.5) -- (5.5,2);
  \draw[Red,ultra thick,rounded corners=6] (0,2.5) -- (2.5,2.5) -- (2.5,1.5) -- (.5,1.5) -- (.5,1);
  \draw[Red,ultra thick,rounded corners=6] (2.5,2) -- (2.5,1.5) -- (3,1.5); 
  \draw[Red,ultra thick,rounded corners=6] (3,1.5) -- (3.25,1.25);
  \draw[Red,ultra thick,rounded corners=6] (1.5,0) -- (1.5,.5) -- (5,.5); 
  \foreach \x in {0,1,4}
    \draw (\x+.5,4.3) node[] {\tiny$0$};
  \foreach \x in {2,3,5,6}
    \draw (\x+.5,4.3) node[] {\tiny$1$};
  \draw[ultra thick,rounded corners=6] (17.5,4) -- (17.5,3.5) -- (18.5,3.5) -- (18.5,3);
  \draw[ultra thick,rounded corners=6] (13.5,4) -- (13.5,.5) -- (13,.5);
  \draw[ultra thick,rounded corners=6] (.5+12,4) -- (.5+12,1.5) -- (0+12,1.5);
  \draw[ultra thick,rounded corners=6] (3.5+12,0) -- (3.5+12,1.5) -- (5+12,1.5);
  \draw[Blue,ultra thick,rounded corners=6] (6.5+12,4) -- (6.5+12,3.5) -- (7+12,3.5);
  \draw[Blue,ultra thick,rounded corners=6] (16.5,4) -- (16.5,3.5) -- (17.5,3.5) -- (17.5,2.5) -- (18,2.5);
  \draw[Blue,ultra thick,rounded corners=6] (15.5,4) -- (15.5,2.5) -- (16.5,2.5) -- (16.5,0);
  \draw[Blue,ultra thick,rounded corners=6] (14.5,4) -- (14.5,2.5) -- (15.5,2.5) -- (15.5,1.5) -- (14.5,1.5) -- (14.5,0);
  \draw[Purple,ultra thick,rounded corners=6] (12,3.5) -- (16.5,3.5) -- (16.5,2.5) -- (17.5,2.5) -- (17.5,2);
  \draw[Red,ultra thick,rounded corners=6] (12,2.5) -- (14.5,2.5) -- (14.5,1.5) -- (12.5,1.5) -- (12.5,1);
  \draw[Red,ultra thick,rounded corners=6] (14.5,2) -- (14.5,1.5) -- (15,1.5);
  \draw[Red,ultra thick,rounded corners=6] (15,1.5) -- (15.25,1.25);
  \draw[Red,ultra thick,rounded corners=6] (1.5+12,0) -- (1.5+12,.5) -- (5+12,.5); 
  \foreach \x in {0,1,5}
    \draw (\x+12.5,4.3) node[] {\tiny$0$};
  \foreach \x in {0,1,2,4}
    \draw (\x+14.5,4.3) node[] {\tiny$1$};
  \end{tikzpicture}
  \end{center}
  Both $K$-DS pipe dreams have their genuine $K$-tiles in the lower half, 
  displacer and fusor
  \begin{center}
  \begin{tikzpicture}[scale=1]
  \foreach \xoffset in {0,4}
  {
    \begin{scope}[shift={(\xoffset,0)}]
  \draw (0,0) -- (0,1) -- (1,1) -- (1,0) -- (0,0);
    \end{scope}
  }
  \draw[Blue,ultra thick,rounded corners=6] (1,.5) -- (.5,.5) -- (.5,0);
  \draw[Red,ultra thick,rounded corners=6] (.5,1) -- (.5,.5) -- (0,.5);
  \draw[Red,ultra thick,rounded corners=6] (.5,1) -- (.5,.5) -- (1,.5);
  \draw[Blue,ultra thick,rounded corners=6] (4.5,1) -- (4.5,.5) -- (4,.5);
  \draw[ultra thick,rounded corners=6] (5,.5) -- (4.5,.5) -- (4.5,0);
  \draw[Red,ultra thick] (4,.5) -- (4.2,.3);
  \draw (-.3,.5) node[] {\small$R$};
  \draw (-.3+1.6,.5) node[] {\small$R1$};
  \draw (.5,-.3) node[] {\small$1$};
  \draw (.5,-.3+1.6) node[] {\small$R$};
  \draw (4-.3,.5) node[] {\small$R1$};
  \draw (4-.3+1.5,.5) node[] {\small$0$};
  \draw (4.5,-.3) node[] {\small$0$};
  \draw (4.5,-.3+1.6) node[] {\small$1$};
  \draw (4.2,.17) node[] {\tiny$R$};
  \end{tikzpicture}
  \end{center}
  at $(2,2)$ and $(3,2)$, with words $W = \emptyset$ and $W = R$, respectively. 
  The second $K$-DS pipe dream contains an equivariant 1-1 elbow at $(4,2)$. 
  Therefore, 
  \begin{eqnarray*}
    \left[ X^\lambOneOne \oplus X^\lambOneNaught \right]
    & = & \left[ X^\lambTwoOneNaughtNaught \right]
    + \left[ X^\lambOneOneOneNaught \right] 
    + (1 - \exp(t)) \left[ X^\lambTwoOneOneNaught \right] 
     -  \exp(t)\left[ X^\lambOneOneNaughtNaught \right] 
    - (1 - \exp(t))\exp(t) \left[ X^\lambOneOneOneNaught \right] \\
    & = & \left[ X^\lambTwoOneNaughtNaught \right]
    + (1 - \exp(t) + \exp(2t) )\left[ X^\lambOneOneOneNaught \right] 
     +  (1 - \exp(t)) \left[ X^\lambTwoOneOneNaught \right] 
    - \exp(t) \left[ X^\lambOneOneNaughtNaught \right] .
  \end{eqnarray*}
  as elements of $K_0^S(\Gr{4}{7})$. 

  When expanding the equivariant $K$-class of $X^\lambOneOne \oplus X^\lambOneNaught$ in the basis of Schubert classes, 
  we find the three DS pipe dreams from example \ref{example:H}, 
  and in addition, two DS pipe dreams
  \begin{center}
  \begin{tikzpicture}[scale=.6]
  \foreach \xoffset in {0,12}
  {
    \begin{scope}[shift={(\xoffset,0)}]
  \fill[SkyBlue] (0,3) rectangle (3,1);
  \fill[SkyBlue] (3,1) rectangle (5,0);
  \draw (0,3) -- (0,0) -- (5,0) -- (5,1) -- (6,1) -- (6,2) -- (7,2) -- (7,3) -- (0,3);
  \draw (-.2,.5) node[] {\tiny$0$};
  \foreach \x in {0,1}
    \draw (\x+2.5,-.3) node[] {\tiny$1$};
  \foreach \x in {0,1}
    \draw (\x+6.2,1.5+\x) node[] {\tiny$1$};
  \draw (0.5,-.3) node[] {\tiny$R$};
  \draw (0-.2,1.5) node[] {\tiny$Q$};
  \draw (0-.2,2.5) node[] {\tiny$R$};
  \draw (5+.2,0.5) node[] {\tiny$0$};
  \foreach \x in {0,3}
  \draw (1.5 + \x,-.3) node[] {\tiny$0$};
  \draw (5.5,1-.3) node[] {\tiny$Q$};
  \draw (6.5,2-.3) node[] {\tiny$0$};
  \end{scope}
  }
  \draw[ultra thick,rounded corners=6] (.5,3) -- (.5,.5) -- (0,.5);
  \draw[ultra thick,rounded corners=6] (2.5,3) -- (2.5,0);
  \draw[ultra thick,rounded corners=6] (4.5,3) -- (4.5,2.5) -- (6.5,2.5) -- (6.5,2); 
  \draw[ultra thick,rounded corners=6] (5,.5) -- (4.5,.5) -- (4.5,0);
  \draw[Blue,ultra thick,rounded corners=6] (1.5,3) -- (1.5,0);
  \draw[Blue,ultra thick,rounded corners=6] (3.5,3) -- (3.5,2.5) -- (4.5,2.5) -- (4.5,.5) -- (3.5,.5) -- (3.5,0);
  \draw[Blue,ultra thick,rounded corners=6] (5.5,3) -- (5.5,1.5) -- (6,1.5); 
  \draw[Blue,ultra thick,rounded corners=6] (6.5,3) -- (6.5,2.5) -- (7,2.5); 
  \draw[Red,ultra thick,rounded corners=6] (0,2.5) -- (3.5,2.5) -- (3.5,.5) -- (.5,.5) -- (.5,0);
  \draw[Red,ultra thick,rounded corners=6] (3.5,1) -- (3.5,.5) -- (4,.5); 
  \draw[Red,ultra thick,rounded corners=6] (4,.5) -- (4.25,.25);
  \draw[Purple,ultra thick,rounded corners=6] (0,1.5) -- (5.5,1.5) -- (5.5,1);
  \foreach \x in {0,1,4}
    \draw (\x+.5,3.3) node[] {\tiny$0$};
  \foreach \x in {2,3,5,6}
    \draw (\x+.5,3.3) node[] {\tiny$1$};
  \draw[ultra thick,rounded corners=6] (12.5,3) -- (12.5,.5) -- (12,.5);
  \draw[ultra thick,rounded corners=6] (13.5,3) -- (13.5,0);
  \draw[ultra thick,rounded corners=6] (17.5,3) -- (17.5,2.5) -- (18.5,2.5) -- (18.5,2);
  \draw[ultra thick,rounded corners=6] (17,.5) -- (16.5,.5) -- (16.5,0);
  \draw[Blue,ultra thick,rounded corners=6] (14.5,3) -- (14.5,0);
  \draw[Blue,ultra thick,rounded corners=6] (15.5,3) -- (15.5,2.5) -- (16.5,2.5) -- (16.5,.5) -- (15.5,.5) -- (15.5,0);
  \draw[Blue,ultra thick,rounded corners=6] (16.5,3) -- (16.5,2.5) -- (17.5,2.5) -- (17.5,1.5) -- (18,1.5);
  \draw[Blue,ultra thick,rounded corners=6] (18.5,3) -- (18.5,2.5) -- (19,2.5);
  \draw[Red,ultra thick,rounded corners=6] (12,2.5) -- (15.5,2.5) -- (15.5,.5) -- (12.5,.5) -- (12.5,0);
  \draw[Red,ultra thick,rounded corners=6] (15.5,1) -- (15.5,.5) -- (16,.5); 
  \draw[Red,ultra thick,rounded corners=6] (16,.5) -- (16.25,.25);
  \draw[Purple,ultra thick,rounded corners=6] (12,1.5) -- (17.5,1.5) -- (17.5,1);
  \foreach \x in {0,1,5}
    \draw (\x+12.5,3.3) node[] {\tiny$0$};
  \foreach \x in {2,3,4,6}
    \draw (\x+12.5,3.3) node[] {\tiny$1$};
  \end{tikzpicture}
  \end{center}
  having the same genuine $K$-tiles and equivariant tiles. 
\end{example}

\subsection{Outline of the paper}

\junk{
While we have attempted to make the statements in this paper self-contained, 
the proofs rely heavily on concepts developed in \cite{K} and we admit
that this paper would be difficult to read without familiarity with \cite{K}.
On the other hand, the only new geometry in this paper (beside invocation
of results from \cite{K}) is the use of direct sum and Grassmannian duality,
the rest being combinatorics of pipe dreams and associated objects.
}

In \S \ref{ssec:positroidvars} we recall the definitions of and
around positroid varieties, and compute how Grassmannian duality acts on them.
In \S \ref{ssec:KTformula} we recall the expansion formula 
from \cite{K} that we will twice have to use.
The real work is in \S \ref{sec:degen}, 
where we set up the direct sum variety in \S \ref{ssec:tostart}, 
apply the formula the first time in \S \ref{ssec:bottombdcrows},
use Grassmannian duality in \S \ref{ssec:midsort},
and apply the formula the second time in \S \ref{ssec:finalrows},
with summary in \S \ref{ssec:DSProof}.

\section{Interval positroid varieties and IP pipe dreams}

\subsection{Positroid varieties, interval positroid varieties, and duality}
\label{ssec:positroidvars}

Our reference for this section is \cite{KLS}.

Each of the expansion formul\ae\ relates a direct sum variety 
$X^\lambda \oplus X^\mu$ with Schubert varieties $\{X^\nu\}$.
To prove them, we make use of a class of varieties that 
interpolates between them, {\em interval positroid varieties}
(studied in \cite{K} and implicitly, in \cite{BC}).
While \cite{K} gives expansion formul\ae\ for classes of general interval
positroid varieties, its equivariant formulae are not positive in the senses
required in this paper, as will be explained in \S \ref{ssec:bottombdcrows}.

First define a \defn{bounded juggling pattern $J :\integers\to \integers$}
as a bijection satisfying $J(i+n) = J(i)+n$ for all $i\in \integers$
(making it an \defn{affine permutation}), with each $J(i)-i \in [0,n]$.
The average value of $J(i)-i$ is necessarily an integer $k\in [0,n]$,
called the \defn{ball number}. 
See Figure \ref{fig:goodVariety} for an example; 
the picture only shows the union of all $\{i\} \times (-i+[0,n])$ in $\integers\times\integers$, 
which suffices to characterize $J$. 

\begin{center}
\begin{figure}[ht]
    \begin{tikzpicture}[scale=.25]
    \foreach \offset in {0,1,2,3,4,5,6,7,8,9,10,11,12,13,14,15,16,17,18,19,20,21,22,23,24,25}
    {
      \begin{scope}[shift={(\offset,-\offset)}]
        \draw (0,0) -- (0,-1) -- (1,-1);
        \draw (12,0) -- (12,-1) -- (13,-1);
      \end{scope}
    }
    \foreach \offset in {0,11}
    {
      \begin{scope}[shift={(\offset,-\offset)}]
        \fill (3.5,-2.5) circle (0.25);
        \fill (8.5,-7.5) circle (0.25);
        \fill (9.5,-8.5) circle (0.25);
        \fill (10.5,-5.5) circle (0.25);
        \fill (11.5,-6.5) circle (0.25);
        \fill (12.5,-10.5) circle (0.25);
        \fill (13.5,-3.5) circle (0.25);
        \fill (15.5,-4.5) circle (0.25);
        \fill (16.5,-9.5) circle (0.25);
        \fill (17.5,-11.5) circle (0.25);
        \fill (18.5,-12.5) circle (0.25);
      \end{scope}
    }
    \fill (6.5,-.5) circle (0.25);
    \fill (7.5,-1.5) circle (0.25);
    \fill (25.5,-24.5) circle (0.25);
    \fill (35.5,-25.5) circle (0.25);
    \draw[Red] (2,0) -- (2,-2) -- (13,-2) -- (13,-13) -- (24,-13) -- (24,-24) -- (35,-24) -- (35,-26);
    \end{tikzpicture}
  \caption{A bounded juggling pattern}
  \label{fig:goodVariety}
\end{figure}
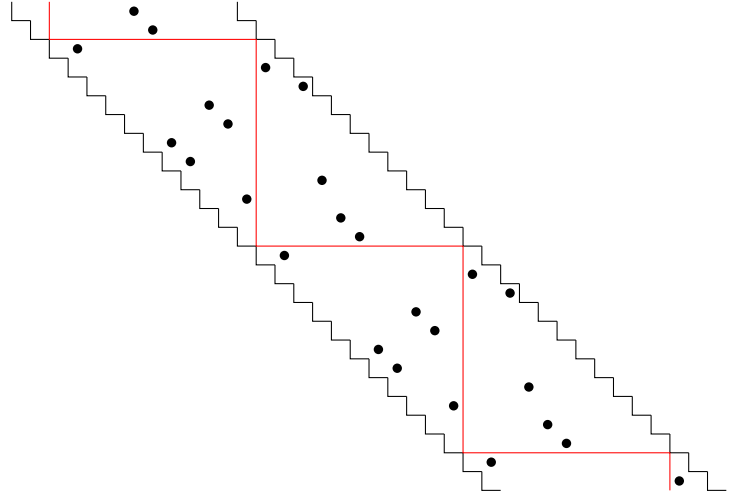
\end{center}

Given a matrix $M \in M_{k \times n}$, let $(\vec v_i)_{i\in \integers}$ be the 
infinite periodic list of column vectors, 
$\vec v_i :=$ column $i\bmod n$ of $M$,
and associate a bounded juggling pattern $J_M$ defined by
$$ J_M(i) := \min\ \{j\geq i \ 
:\ \vec v_i \in span(\vec v_{i+1},\ldots,\vec v_j) \} $$
The boundedness and periodicity properties of $J_M$ are obvious, the bijectivity
less so but not difficult. 

For $J$ a bounded juggling pattern of period $n$ and ball number $k$, let
$$ \Pi_J 
:= \overline{ \left\{rowspan(M) \ :\ M \in M_{k\times n},\ J_M = J \right\} }
\quad\subseteq \Gr{k}{n} $$
be the associated \defn{positroid variety}. Scheme-theoretically, 
it is given by the rank conditions
$$ rank\left(M_{[i,j]} := \text{columns $[i,j]\bmod n$ in $M$}\right) 
\quad\leq\quad \big|\ [i,j] \setminus J([i,j])\ \big|,\qquad \forall i\leq j.
$$
Some of the rank conditions defining a positroid variety
$\Pi_J$ follow from others.  A box $(i, j)$ is called
\defn{essential} for $j$ if its rank condition 
${\rm rank}(M_{[i,j]}) \leq r_{i,j}$ is not implied by the rank
condition for any of $(i \pm 1, j)$, $(i, j \pm 1)$.  For defining
$\Pi_J$, it suffices to only impose essential rank conditions ${\rm
  rank}(M_{[i,j]}) \leq r_{i,j}$.

The periodicity condition on $J$ lets us reconstruct it from $J(1),\ldots,J(n)$,
or equivalently from the parallelogram with top $(1,1)-(1,n+1)$ 
and bottom $(n,n)-(n,2n)$ in the $\infty\times\infty$ permutation matrix.
Cut this parallelogram in half along a vertical line into 
$J$'s \defn{West triangle} and \defn{East triangle}. 
In the example of Figure \ref{fig:goodVariety}, 
the two triangles are delimited by red lines. 
By \cite[\S 2.2]{K}, the positroid variety $\Pi_J$ is 
\begin{itemize}
\item an \defn{interval positroid variety,} so called because
  the only essential conditions are on honest intervals $[i,j] \subseteq [1,n]$
  (not cyclic intervals), 
  iff the $k$ dots in the East triangle are Northwest to Southeast, 
\item a \defn{Richardson variety} iff in addition, the $n-k$ dots in the
  West triangle are Northwest to Southeast,
\item a \defn{Schubert variety}\footnote{We remind the reader that 
    the convention used here is opposite the cohomological one
    used in \cite{K}.}
  iff it is a Richardson variety and in addition, 
  the $n-k$ dots in the West triangle are flush North.
\end{itemize}

Most of the varieties considered in this paper are actually
{\em interval} positroid varieties, which by \cite[proposition 2.1]{K}
can be specified by giving just their West triangles,
upper triangular partial permutation matrices of rank $n-k$.
The juggling pattern from Figure \ref{fig:goodVariety} defines an interval positroid variety, 
provided the row indices in the West triangle below the red lines are $1,\ldots,n$. 

This subclass contains the class (depending also on a parameter $i\in [0,n]$) 
that will actually be of central interest in the paper, 
and we will call \emph{$i$-sorted}.
If the dots below row $i$ in the West triangle are in the
top rows $i+1,i+2,\ldots$ and run NW/SE, call this being \defn{$i$-sorted}.
Note that we sort from the back, so $i$-sorted implies $j$-sorted
when $i\leq j$, not $i\geq j$.
Call the West triangle or the bounded juggling pattern
it minimally extends to \defn{$i$-sorted}. 

\junk{
  The essential boxes appearing the juggling pattern of interval positroid 
  varieties are found at $(i,j)$ such that $1 \leq i \leq j \leq n$.  
  So the rank conditions necessary to define interval positroid
  varieties only involve honest intervals.  The essential boxes
  appearing the juggling pattern of Schubert varieties take the shape
  $(1,j)$; those appearing in opposite Schubert varieties the shape
  $(i,n)$; each Richardson variety is the intersection of a Schubert and
  an opposite Schubert variety.
}

In \cite{K}, the class of an arbitrary
interval positroid variety in $T$-equivariant $K$-homology is
expressed as a linear combination of Schubert classes. 
However, due to some reindexing involved in our choice of circle $S\leq T$,
that expansion is not positive in the sense required in this paper,
so our use of it will be slightly indirect.

\begin{Proposition}\label{prop:duality}
  Use the standard bilinear form on $\complexes^n$ to define a 
  ``duality'' isomorphism $D : \Gr{k}{n} \ \widetilde\to\ \Gr{n-k}{n}$. 
  Let $J : \integers\to \integers$ be a bounded juggling pattern. Then
  $$ D(\Pi_J) = \Pi_{J^{-1} \circ (i\mapsto i+n)}. $$
  This exchanges the West and East triangles of $J$, rotating $180^\circ$.
\end{Proposition}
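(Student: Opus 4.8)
The plan is to verify the identity on the dense open positroid cell and then pass to closures. Since $D$ is an isomorphism of varieties (in Pl\"ucker coordinates it is, up to signs, the linear map $p_I \mapsto p_{[n]\setminus I}$), it is a homeomorphism and so commutes with closure; and it sends $\rowspan(M)$ to $\rowspan(M^\perp)$, where $M^\perp$ denotes any $(n-k)\times n$ matrix with $\rowspan(M^\perp) = (\rowspan M)^\perp$. So it suffices to show $J_{M^\perp} = J_M^{-1}\circ(i\mapsto i+n)$ for every rank-$k$ matrix $M$, and then intersect with $\overline{\phantom{x}}$. As a preliminary consistency check, $J_M^{-1}\circ(i\mapsto i+n)$ is indeed a bounded juggling pattern of ball number $n-k$: from $J_M(i)-i\in[0,n]$ with average $k$ one reads off $J_M^{-1}(i)+n-i\in[0,n]$ with average $n-k$; and since $J_M$ commutes with translation by $n$, this candidate also equals $(i\mapsto i+n)\circ J_M^{-1}$, so the asserted formula is symmetric under the involution $D$.

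The computational engine is an elementary identity relating a subspace to its orthogonal complement under coordinate projections. For $V\subseteq\complexes^n$ and $S\subseteq[n]$, writing $\complexes^S$ for the spanned coordinate subspace and $\pi_S$ for the projection onto it, the orthogonal complement of $\pi_S(V)$ taken inside $\complexes^S$ equals $V^\perp\cap\complexes^S$ (because the standard form stays nondegenerate on $\complexes^S$), hence $\dim\pi_S(V) = |S| - \dim(V^\perp\cap\complexes^S)$. Applying this to $V=\rowspan M$, then again to $V^\perp$ on the complementary index set $\bar S := [n]\setminus S$, and combining, I would obtain $\rank\big((M^\perp)_{\bar S}\big) = \rank(M_S) + |\bar S| - k$. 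Because the mod-$n$ complement of a cyclic interval is again a cyclic interval, this immediately translates the cyclic-interval rank conditions cutting out $\Pi_J$ into cyclic-interval rank conditions on $M^\perp$; so $D(\Pi_J)$ is automatically a positroid variety, and all that remains is to name its juggling pattern.

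To name it, I would translate the ``dependency'' condition appearing in the definition of $J_M$. Using the displayed identity for $S=[i,j]$ and for $S=[i+1,j]$, whose complements are the cyclic intervals $[j+1,i-1]$ and $[j+1,i]$, one checks that $\vec v_i\in\mathrm{span}(\vec v_{i+1},\ldots,\vec v_j)$ (columns of $M$) is equivalent to $\vec w_i\notin\mathrm{span}(\vec w_{j+1},\ldots,\vec w_{i-1})$ (columns of $M^\perp$, periodically extended). Feeding this into $J_M(i)=\min\{j\ge i : \vec v_i\in\mathrm{span}(\vec v_{i+1},\ldots,\vec v_j)\}$, and using the dual description $J_{M^\perp}^{-1}(m)=\max\{\ell\le m : \vec w_m\in\mathrm{span}(\vec w_\ell,\ldots,\vec w_{m-1})\}$ (a routine consequence of the definition, since spans only grow as the window is enlarged leftward, so this set is down-closed), I expect to arrive at $J_{M^\perp}^{-1}(i+n)=J_M(i)$ for all $i\in\integers$, i.e.\ $J_{M^\perp}=J_M^{-1}\circ(i\mapsto i+n)$. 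Taking closures then gives $D(\Pi_J)=\Pi_{J^{-1}\circ(i\mapsto i+n)}$. Finally, the assertion about triangles is a direct reading of the permutation matrix: $J\mapsto J^{-1}$ transposes it, post-composing with the shift by $n$ translates its columns by $n$, and the combined effect on the parallelogram of rows $1,\ldots,n$ is the $180^\circ$ rotation interchanging the West and East halves.

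The step I expect to be most delicate is this last identification. One has to keep the cyclic (mod $n$) index arithmetic and the periodic extension $\vec v_i := \text{(column $i\bmod n$)}$ perfectly consistent, so that the off-by-ones relating $[i+1,j]$ to its complement, and the boundary cases $J_M(i)\in\{i,\,i+n\}$ (a zero column, or a column with no dependency within one full period), all come out right. This bookkeeping is also exactly what pins down the direction of composition in $J^{-1}\circ(i\mapsto i+n)$.
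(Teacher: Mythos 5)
Your proof is correct, but it takes a genuinely different route from the paper's. The paper never leaves the level of defining equations: it writes $\Pi_J$ as an intersection of meet-irreducible positroid varieties (one per essential rank condition), dualizes each single condition $\rank(M_{[i,j]})\leq r$ into a single condition on the complementary cyclic interval $[j+1,i-1]$, and verifies the new rank bound by a count of the four identity blocks $I_1,\dots,I_4$ of dots. You instead work with a generic point of the open positroid cell: the identity $\rank\bigl((M^\perp)_{\bar S}\bigr)=\rank(M_S)+|\bar S|-k$ lets you compute $J_{M^\perp}$ directly from $J_M$ and then pass to closures using that $D$ is an isomorphism. Your approach has the advantage of producing the juggling pattern of the dual intrinsically (and of making the exchange of the ``forward'' and ``backward'' dependency descriptions visible), while the paper's stays closer to the scheme-theoretic definition by rank conditions that it needs elsewhere. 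Your rank identity is in fact the same arithmetic as the paper's block count, just packaged as linear algebra on $V$ and $V^\perp$ rather than as dot-counting.

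One step deserves more care than you give it: the ``dual description'' $J_N^{-1}(m)=\max\{\ell\leq m:\vec u_m\in\mathrm{span}(\vec u_\ell,\dots,\vec u_{m-1})\}$. Your parenthetical justification only shows that the set on the right is down-closed, hence has a well-defined maximum; it does not show that this maximum is $J_N^{-1}(m)$, i.e., that the assignment ``each column to the first later window containing it'' and ``each column to the last earlier window containing it'' define mutually inverse maps. This is a genuine (if standard) lemma, essentially equivalent to the bijectivity of $J_N$ that the paper itself declares ``less obvious but not difficult,'' and it is proved in \cite{KLS}; you should either cite it or prove it, since without it your chain of equivalences only identifies $J_M$ with the backward dependency function of $M^\perp$, not yet with $J_{M^\perp}^{-1}\circ(i\mapsto i+n)$. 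With that reference supplied, the argument is complete.
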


\begin{proof}
  Each $\Pi_J$ is the intersection of ``meet-irreducible'' positroid varieties 
  given by single (essential) rank conditions
  $$ \{ M \in M_{k\times n}\ :\ rank(M_{[i,j]}) \leq |[i,j]|-
  \#\{\text{dots SW of $(i,j)$ in $J$} \}. $$
  This says that dots in $J$ come in four blocks of identity matrices: 
  \begin{itemize}
    \item Dots in the triangle SW of $(i,j)$ come in an identity block $I_1$
    sitting flush North and flush East in that triangle. 
    \item Dots in the remainder of the West triangle $[i,i+n]$ come in an identity block $I_2$
    sitting due South of $I_1$ and flush East in that triangle. 
    \item Dots in the complementary East triangle come in the unique NW-SE shape, 
    thus in identity blocks $I_3$ and $I_4$. 
  \end{itemize}
  \begin{center}
    \begin{tikzpicture}[scale=.34]
      \foreach \x in {0,10}
        {
        \fill[Melon] (3+\x,20-\x) -- (3+\x,18-\x) -- (5+\x,18-\x) -- (5+\x,20-\x) -- (3+\x,20-\x);
        \fill[Melon] (10+\x,18-\x) -- (7.5+\x,18-\x) -- (7.5+\x,15.5-\x) -- (10+\x,15.5-\x) -- (10+\x,18-\x);
        \draw (3+\x,20-\x) -- (3+\x,18-\x) -- (5+\x,18-\x) -- (5+\x,20-\x);
        \draw (10+\x,18-\x) -- (7.5+\x,18-\x) -- (7.5+\x,15.5-\x) -- (10+\x,15.5-\x);
        \draw (4+\x,19-\x) node[] {\tiny $I_1$};
        \draw (8.75+\x,16.75-\x) node[] {\tiny $I_2$};
        \draw (4.5+\x,19.5-\x) node[Red] {\tiny $e$};
        \draw[densely dashed] (\x,20-\x) -- (-1+\x,20-\x);
        \draw[densely dashed] (5+\x,18-\x) -- (5+\x,14-\x);
        }
      \draw (-1.5,19.5) node[] {\tiny $i$};
      \draw (4.5,13.5) node[] {\tiny $j$};
      \fill[Melon] (10,15.5) -- (13,15.5)  -- (13,12.5) -- (10,12.5) -- (10,15.5);
      \fill[Melon] (15,10) -- (15,12.5) -- (17.5,12.5) -- (17.5,10) -- (15,10);
      \draw (10,15.5) -- (13,15.5)  -- (13,12.5) -- (10,12.5);
      \draw (15,10) -- (15,12.5) -- (17.5,12.5) -- (17.5,10);
      \draw (11.5,14) node[] {\tiny $I_3$};
      \draw (16.25,11.25) node[] {\tiny $I_4$};
      \draw (15.5,10.5) node[Blue] {\tiny $e$};
      \draw (10,20) -- (0,20) -- (20,0) -- (20,10) -- (10,20) -- (10,10) -- (20,10);
    \end{tikzpicture}
  \end{center}
  The essential condition defining $\Pi_J$ is indicated by an ${\color{Red}e}$.
  The dual of $\Pi_J$ 
  is also defined by just one rank condition, 
  $$ \{ M \in M_{n-k\times n}\ :\ rank(M_{[j+1,i-1]} \leq n-k-rank(I_1) \}. $$
  (Perhaps surprisingly, the size of $[i,j]$ does not show up on the right-hand side.)
  However, we want the rank bound to be $(n-|[i,j]|) - \#\{\text{dots strictly NE of $(i,j)$}\}$.
  
  Four equalities on the respective sizes of the blocks of $J$ are easy to check,
  \begin{eqnarray*}
  rank(I_1) + rank(I_2) & = & n-k , \\
  rank(I_3) + rank(I_4) & = & k , \\
  rank(I_1) + rank(I_3) & = & [i,j] , \\
  rank(I_2) + rank(I_4) & = & n-[i,j] .
  \end{eqnarray*}
  By induction over $rank(I_1)$ 
  (or equivalently, by induction over $J$ in the affine Bruhat order) 
  they imply a fifth equality, 
  \[
    rank(I_4) = rank(I_1) + k - |[i,j]| 
  \]
  giving the desired rank bound. 
  The corresponding essential condition is indicated by the letter ${\color{Blue}e}$ in the picture. 
%
\junk{
  We count the dots in different regions of $J$: \\
  \centerline{\epsfig{file=duality2.eps,height=2in}}
  \begin{eqnarray*}
    a+b+d+c =& i-1 &= A+D+G+J \\
    E+F+G+H =& |[i,j]| &= H+K+B+E \\
    I+J+K+L =& n-j &= C+F+I+L    
  \end{eqnarray*}
  The blue region with $E$ dots in it is Southwest of position $[i,j]$,
  and the pink region (the one relevant in the dual) has
  $C+D+L$ dots, which we want to be $(n-|[i,j]|)-E$.
  \begin{eqnarray*}
    n-|[i,j]| 
    &=& (n-j) + (i-1) \\
    &=& 
  \end{eqnarray*}
}
\end{proof}

\subsection{The $K^T$-formula from \cite[\S 4.4]{K}}
\label{ssec:KTformula}

We recall several definitions from \cite{K}, to state (almost) its 
most precise formula, which involves cutting a triangle
$\{ (a \leq b) \} \subseteq [n]\times [n]$ into the squares
lexicographically before and after a position $(i,j)$. 
We simplify very slightly in the translation, in that we only need to
cut into the \defn{top half} consisting of rows $[1,i]$ and
\defn{bottom half} of rows $[i+1,n]$.

\subsubsection{Slices and their bounded juggling patterns}

The basic combinatorial objects in this paper (not the DS pipe dreams,
which are visibly composite) are
\begin{enumerate}
\item {\em $[h,j]$-partial pipe dreams}, which for each $i\in [h,j]$ give
\item {\em $i$-slices}, which give
\item $i$-sorted upper triangular partial permutations, which give
\item bounded juggling patterns with East triangle running NW/SE.
\end{enumerate}
We first address (2)$\to$(3)$\to$(4).
We have already discussed the last map; \cite[proposition 2.1]{K}
says that there is a unique such extension.

Define an \defn{$i$-slice} as a labeling with $0,1,R,Q$
of the following edges of the top half:
\begin{itemize}
\item the $n+1-i$ horizontal edges below row $i$ (above row $i+1$),
\item the $i$ vertical edges on the East side of rows $[1,i]$, and
\item the $i-1$ horizontal edges below the diagonal blocks.
\end{itemize}

(Notice that none of these edges are vertical and interior, which will
be why we don't have to consider the more complicated multiple labels 
that $K$-pieces can have.)

To an $i$-slice $s$, we attempt to associate an upper triangular
partial permutation matrix $\pi(s)$,
and if successful we call $s$ \defn{viable}. 

Draw rays perpendicular to the $L$-edges (for $L \in \{R,Q\}$), 
into the top half. There
should be the same number $m$ of vertical as horizontal rays -- or
else $s$ is not viable.  They should meet in $m^2$ locations inside
the top half, or else $s$ is not viable. Put dots along the diagonal
$m$ of these intersections.

For $1$-edges, much the same occurs, except there may be $m'$
horizontal $1$s and only $m < m'$ vertical $1$s. In this case ignore
the left $m'-m$ horizontal $1$s and only draw $m$ dots.
This completes the filling of the top half.

If there are $p$ $0$-labels on the South edge of $s$, 
put dots NW/SE in rows $[i+1,i+p]$ below those $0$s. 
Notice that the first dot will not land inside the bottom half
if the label under the $(i,i)$ square is $0$; this is the
last way to be non-viable. {\bf Hereafter all slices are
  assumed to be viable unless stated otherwise.}

The result is richer than an upper triangular partial permutation: its dots
come in $R$-type and $Q$-type, each group of which separately runs NW/SE.
We leave the reader to convince herself that

\begin{Lemma}\label{lem:slicetoperm}
  This constuction gives a bijection between $i$-slices, and
  $i$-sorted upper triangular partial permutations in which one has
  chosen a decomposition of the upper dots into the $R$-dots and $Q$-dots,
  each group of which separately runs NW/SE.
\end{Lemma}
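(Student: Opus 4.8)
The plan is to construct the inverse of $\pi$ explicitly and check that the two round-trips are identities. The inverse is simply ``projection of dots onto the boundary.'' Given an $i$-sorted upper triangular partial permutation $\sigma$, together with the chosen decomposition of its top-half dots into an $R$-chain and a $Q$-chain (and, in general, whatever NW/SE chain the vertical $1$-edges are to be responsible for), define a candidate $i$-slice $s(\sigma)$ by: label a horizontal edge of the top half by $0$ if its column carries a dot of $\sigma$ lying below row $i$, by $R$ or $Q$ if its column carries an $R$-dot or a $Q$-dot, and by $1$ otherwise; label the East-side vertical edge of row $r\in[1,i]$ by $R$, $Q$ or $1$ according to the type of the dot of $\sigma$ in row $r$, and by $0$ if row $r$ is dotless. (Here one uses that each of the $n$ columns hosts exactly one horizontal edge of the top half, either below a diagonal block or below row $i$.) The substance of the proof is that $s(\sigma)$ is viable with $\pi(s(\sigma))=\sigma$ (decompositions included), and that $s(\pi(s))=s$ for every viable $s$.

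It helps to note that $\pi$ is built from four sub-constructions acting on disjoint sets of edges — the $R$-edges, the $Q$-edges, the $1$-edges, and the $0$-labeled South edges — producing dots of four visibly distinguishable kinds; whether the resulting dots avoid sharing rows and columns (so that their union is a genuine partial permutation) is precisely one of the things that ``viability'' buys us, and in the reverse direction it is automatic since one starts from an honest $\sigma$. Three of the four sub-constructions are then bijections for a single elementary reason: a NW/SE chain of $m$ dots is determined by, and determines, its set of occupied rows and its set of occupied columns, both of which are automatically $m$-element sets by strict monotonicity. Thus sending the $R$-edges, which sit in some column set $C$ and some row set of the same size (equality being guaranteed by viability), to the NW/SE chain with that row set and column set $C$, is inverted by projecting the $R$-dots to the two axes; the $Q$-edges and — using the $i$-sorted hypothesis to locate the resulting rows as exactly $i+1,\dots,i+p$ — the $0$-labeled South edges are handled verbatim.

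The delicate sub-construction is the $1$-edges, because of its asymmetry: when there are $m'$ horizontal and $m<m'$ vertical $1$-labels, $\pi$ discards the $m'-m$ leftmost horizontal $1$-rays before forming the NW/SE chain of $m$ dots. In $s(\sigma)$ the $m$ rows come from the $1$-dots, the $m$ kept columns come from the $1$-dots, and the $m'-m$ discarded columns are recovered as exactly the dotless horizontal-edge columns of $\sigma$; so the crux is to show that re-running $\pi$ on $s(\sigma)$ discards precisely those dotless columns, equivalently that every dotless horizontal-edge column of $\sigma$ lies strictly West of every column occupied by a $1$-dot. I expect to extract this from the triangle condition $c\ge r$ together with $i$-sortedness: a column West of the Westmost $1$-dot cannot be matched in a NW/SE way with any horizontal $1$-ray, so viability of $s(\sigma)$ forces it to be dotless; conversely a dotless column lying East of some $1$-dot would break the monotonicity of the $1$-chain together with the partial-permutation constraint on $\sigma$. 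Pinning down this West/East comparison rigorously — and simultaneously verifying that $s(\sigma)$ is always viable, so that the stated target set is exactly the image of the viable slices with no residual condition — is the main obstacle; once that is in hand, the rest is only a matter of tracking which label sits on which edge.
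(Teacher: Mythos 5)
The paper offers no proof of this lemma---it is explicitly ``left to the reader''---so there is nothing of the authors' to compare your argument against. Your framework is surely the intended one: invert $\pi$ by projecting dots onto the boundary edges of the top half, split the verification into the four sub-constructions ($R$, $Q$, $1$, and the South $0$s), and observe that a NW/SE chain is determined by its row set and its column set. Your treatment of the $R$-, $Q$- and $0$-pieces is correct.

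But the obstacle you flag for the $1$-piece is not merely unfinished; the statement you need is false as you have set things up. Take $n=i=3$ and let $\sigma$ consist of a single dot at $(1,1)$ declared to be a $1$-dot. This $\sigma$ is upper triangular, vacuously $3$-sorted, and its (empty) $R$- and $Q$-chains run NW/SE, yet the dotless columns $2$ and $3$ lie \emph{East} of the $1$-dot's column. Your inverse labels all three horizontal edges and row $1$'s vertical edge with $1$, and re-running $\pi$ then discards columns $1$ and $2$ and places the dot at $(1,3)$; neither the triangle condition $c\ge r$ nor $i$-sortedness forbids this configuration, so no argument of the kind you sketch can exist. The fix is to read the lemma's target set more restrictively than its wording (and your proposal) suggests: in both places the lemma is invoked (theorem \ref{thm:bpartial} and proposition \ref{prop:finalarows}) the chosen decomposition covers \emph{all} of the upper dots, so there are no $1$-dots, no vertical edge is labeled $1$, one has $m=0$, and every horizontal $1$ is discarded---the delicate case disappears and your three easy sub-constructions are the whole proof. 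If one insists on allowing $1$-dots, then ``the $1$-dots run NW/SE'' and ``every dotless column lies West of every $1$-dot column'' must be imposed as part of the data being put in bijection, not derived. A smaller point of the same kind: $\pi$ silently ignores a $0$ placed on a horizontal edge below a diagonal block (column $<i$), so injectivity on all viable slices also requires such labelings to be excluded or declared non-viable.
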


Now that we have filled the triangle with corners $(1,1),(1,n),(n,n)$
with a partial permutation of dots, there is a unique way to extend it
to the permutation matrix of a bounded juggling pattern
that is NW/SE in its East triangle \cite[proposition 2.1]{K}.
This we declare to be $g(s)$. 

The most important special case is a $0$-slice. 
Since it has no vertical edges, it can have no letters at all,
only $0$s and $1$s. Then (as explained in \S \ref{ssec:positroidvars},
based on \cite[\S 2.2]{K}) the resulting $\Pi_{g(s)}$ is just a
Schubert\footnote{in the convention opposite to that of \cite{K}} variety.
At the other extreme, the $\Pi_{g(s)}$ for $n$-slices $s$ are 
Richardson varieties. (With more letters made available, as in \cite{K}, 
they are arbitrary interval positroid varieties.)

\subsubsection{Partial pipe dreams}

Define an \defn{$(i,j)$-partial pipe dream} $P$ (for $i\leq j$)
to be a viable $i$-slice and $j$-slice,
together with a filling with lower $K$-tiles of the trapezoid having corners 
$(i+1,i+1),(i+1,n),(j,n), (j,j)$, 
compatibly in the sense that any edge is labeled at most once.  
Note that $P$ has an associated $k$-slice $s_k(P)$ for each $k \in [i,j]$,
but instead of writing $g(s_k(P))$ we'll just write $g_k(P)$.

\begin{Theorem}\label{thm:partialKIP}
  \cite[special case of \S 4.4]{K}
  Fix a $j$-slice $s$, hence an interval positroid variety $\Pi_{g(s)}$.
  Let $\calP$ be the set of $(i,j)$-partial pipe dreams $P$ with $s_j(P)=s$.
  Then as classes in $K^{T}(\Gr{k}{n})$, 
  $$ [\Pi_{g(s)}] = \sum_{P\in \calP} (-1)^{\fusing(P)} \weight_K(P)\ [\Pi_{g_i(P)}] $$
  \junk{  Apparently we {\em do} need $fusing()$.}
  where 
  \[
  \weight_K(P) := \prod_{\text{tiles } t }\begin{cases}
    1 - \exp(y_j-y_i) & \begin{array}{l} \text{if } t \text{ is the equivariant tile (it has all 0s) at }(i,j) \end{array} \\
    \exp(y_j-y_i) & \begin{array}{l} \text{if } t \text{ is the lower fusor tile (it appears in the lower half} \\
    \text{and has 0s on its South and East) at }(i,j) \end{array} \\
    1 & \begin{array}{l} \text{otherwise} \end{array}
  \end{cases}
  \]
  in $K_0^T(\pt) = \Z[\exp(\pm y_1),\ldots,\exp(\pm y_1)]$, 
  and $\fusing(P)$ is the sum of the sizes of words $W$ appearing in fusor tiles of $P$. 
\end{Theorem}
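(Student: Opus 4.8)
The plan is to deduce this statement directly from the master expansion formula of \cite[\S 4.4]{K} by running its recursion only partway. Recall the shape of that formula: the class of an interval positroid variety $\Pi_{g(s)}$ is computed by a chain of $T$-equivariant degenerations, one per row, starting from the bottom row $j$ of the relevant triangle and working upward. At each row the current $T$-invariant subscheme either stays irreducible, splits into two pieces, or acquires an equivariantly weighted lower-dimensional correction, and in each case the local combinatorial record of the move is a single lower $K$-tile placed in that row. Carrying the recursion all the way to the top expresses $[\Pi_{g(s)}]$ as a signed $K_0^T(\pt)$-weighted sum of Schubert classes indexed by $0$-slices. The identity we want is the \emph{intermediate} stage reached after processing rows $j,j-1,\ldots,i+1$ and before touching rows $[1,i]$.

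First I would identify the combinatorial record of exactly those $j-i$ rows. By construction it is precisely an $(i,j)$-partial pipe dream in the sense defined above: a filling of the trapezoid with corners $(i+1,i+1),(i+1,n),(j,n),(j,j)$ by lower $K$-tiles whose South boundary is the prescribed $j$-slice $s$ and whose North boundary is some viable $i$-slice $s_i(P)$. By Lemma~\ref{lem:slicetoperm} together with \cite[Proposition 2.1]{K}, each viable $i$-slice $s_i(P)$ extends uniquely to an $i$-sorted bounded juggling pattern and hence determines an interval positroid variety $\Pi_{g_i(P)}$, and this variety is exactly the $T$-invariant subscheme present at that stage of the degeneration. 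Summing over all fillings of the trapezoid compatible with the fixed South boundary $s$ therefore yields
\[
  [\Pi_{g(s)}] \;=\; \sum_{P\in\calP} (-1)^{\fusing(P)}\,\weight_K(P)\,[\Pi_{g_i(P)}]
\]
with one term for each $P\in\calP$.

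It then remains to read off the coefficients, and here I would simply quote \cite{K}. Since the weight attached to a pipe dream in \cite{K} is a product over its tiles, restricting the pipe dream to the trapezoid restricts the product to the tiles actually occurring there, so $\weight_K(P)$ is exactly the stated product: each equivariant tile at grid position $(i,j)$ contributes $1-\exp(y_j-y_i)$, each lower fusor tile contributes $\exp(y_j-y_i)$, and every other tile contributes $1$. The sign is $(-1)^{\fusing(P)}$ for the reason it is in \cite{K}: in a $K$-theoretic degeneration the lower-dimensional components enter with a sign equal to the parity of the drop in dimension, and this cumulative drop is bookkept by the lengths of the words $W$ on the fusor tiles, whose total is $\fusing(P)$.

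The step I expect to absorb the most care is the translation of conventions. In \cite{K} the recursion is cut at a two-dimensional position $(i,j)$ inside the triangle $\{a\le b\}\subseteq[n]\times[n]$, whereas I have simplified to a cut along whole rows, separating rows $[1,i]$ from rows $[i+1,n]$; I would check that this is the instance of \cite{K}'s construction in which the cut position is taken at the right-hand end of a row, so that \cite{K}'s ``top half'' and ``bottom half'' become our rows $[1,i]$ and $[i+1,n]$, and that under this identification the viability conditions for $i$-slices, the admissible set of lower $K$-tiles, and the torus weights $y_\bullet$ indexing the columns all match \cite{K} verbatim --- in particular that a tile in grid row $i$ and column $j$ really carries the weight difference $y_j-y_i$. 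Given these routine compatibility checks, the theorem follows.
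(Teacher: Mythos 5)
Your proposal is correct and matches the paper's approach exactly: the paper offers no independent proof of this theorem, stating it as a special case of the master formula of \cite[\S 4.4]{K}, obtained by halting that recursion after rows $j,\ldots,i+1$ and reading off the intermediate interval positroid varieties via the slice-to-permutation correspondence of Lemma \ref{lem:slicetoperm}. Your closing caveat about translating \cite{K}'s lexicographic cut at a position $(i,j)$ into a cut along whole rows is precisely the "very slight simplification" the paper itself flags in \S\ref{ssec:KTformula}, so nothing further is needed.
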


\junk{Reminder to AK: reupload \cite{K} with this further improved version
  of \S 4.4}

This inductive theorem was developed for the case $(i,j) = (0,n)$,
in which case (in homology) it gives a Littlewood-Richardson rule.

\junk{
\begin{Proposition}\label{prop:uniquebottom}
  Let $s$ be a viable $n$-slice with $m$ dots in $g(s)$'s West half,
  rows $i+1$ to $n$. If they are in rows $i+1$ to $i+m$, and run NW/SE,
  then there is a unique $i$-partial pipe dream $P$ with $s_n(P) = s$.
  In each column, $s_i(P)$'s and $s_n(P)$'s horizontal labels agree.  
\end{Proposition}

\begin{proof}
  We will start with the $n$-slice, fill in the tiles one by one 
  in the area below row $i$, and see that the filling is unique.

  The labels on the vertical edges of $s$ in rows $i+1\ldots n$ 
  are $m$ nonzero labels, then $n-i-m$ $0$ labels.

  First we consider the last $n-i-m$ rows.
  Look at the rightmost column of tiles. It begins with a label 
  on the bottom (necessarily nonzero) and all $0$s on the right.
  So we must put in all crossing tiles in that column. 
  The result is a similar triangle, of size one smaller, 
  again with all $0$s on the right. Consequently these
  last $n-i-m$ rows are completely full of crossing tiles.
  In particular $s_n(P) = s_{i+m}(P)$.

  Now look at rows $i+1 \ldots i+m$. 
  \comment{row by row from bottom?}
\end{proof}
}

\section{Degeneration in stages, of direct sum varieties}
\label{sec:degen}

As a way to wrap up the multi-step proof in this section, 
we provide a road map in \S \ref{ssec:DSProof};
the reader may want to look ahead to there on a first reading.

\subsection{The positroid variety to start with}\label{ssec:tostart}

\begin{Proposition}\label{prop:bigStartingMatrix}
  Let $J$ be the (upper triangular) square matrix from $(1,1)$ to
  $(b+a,b+a)$ in the affine permutation matrix whose associated
  positroid variety is $X^\lambda$, and similarly $K$ the $(d+c)\times(d+c)$
  one for $X^\mu$. Construct an affine permutation matrix $\sigma$ as follows:
  transpose $J$ to $w_0 J^T w_0$ (again upper triangular),
  direct sum $K$ with that (again upper triangular),
  extend that as the West half of an affine permutation matrix
  whose East half runs NW/SE, and (as the bounding
  squares indicate) rotate forward by $a$.
  
%
  
  \begin{center}
  \begin{tikzpicture}[scale=.34]
  \draw (15,2.5) node[] {\Large$\mapsto$};
  \foreach \offset in {(7,0),(25,0)}
  {
    \begin{scope}[shift={\offset}]
    \fill[Turquoise] (0,5) -- (2,3) -- (5,3) -- (5,5) -- (0,5);
    \draw (0,5) -- (5,0) -- (5,5) -- (0,5);
    \draw (2,3) -- (5,3);
    \draw (2.7,4) node[] {\tiny$K$};
    \draw (.2,3.8) node[] {\tiny$d$};
    \draw (2.7,1.3) node[] {\tiny$c$};
    \end{scope}
  }
  \fill[Melon] (0,5) -- (2,3) -- (5,3) -- (5,5) -- (0,5);
  \draw (2.7,4) node[] {\tiny$J$};
  \draw (.2,3.8) node[] {\tiny$b$};
  \draw (2.7,1.3) node[] {\tiny$a$};
  \draw (0,5) -- (5,0) -- (5,5) -- (0,5);
  \draw (2,3) -- (5,3);
  \foreach \offset in {(20,5),(30,-5)}
  {
    \begin{scope}[shift={\offset}]
    \fill[Melon] (3,2) -- (3,5) -- (5,5) -- (5,0) -- (3,2);
    \draw (0,5) -- (5,0) -- (5,5) -- (0,5);
    \draw (3,2) -- (3,5);
    \draw (4,2.7) node[rotate=-90] {\reflectbox{\tiny$J$}};
    \draw (.5,3.5) node[] {\tiny$a$};
    \draw (3.5,.5) node[] {\tiny$b$};
    \end{scope}
  }
  \foreach \offset in {(30,5),(40,-5)}
  {
    \begin{scope}[shift={\offset}]
    \fill[Gray] (3,2) -- (3,0) -- (0,0) -- (0,5) -- (3,2);
    \draw (0,5) -- (0,0) -- (5,0) -- (0,5);
    \draw (3,2) -- (3,0);
    \draw (1.4,1.5) node[rotate=-90] {\reflectbox{\tiny$J'$}};
    \end{scope}
  }
  \foreach \offset in {(35,0)}
  {
    \begin{scope}[shift={\offset}]
    \fill[Gray] (2,3) -- (0,3) -- (0,0) -- (5,0) -- (2,3);
    \draw (0,5) -- (0,0) -- (5,0) -- (0,5);
    \draw (2,3) -- (0,3);
    \draw (1.8,1.3) node[] {\tiny$K'$};
    \end{scope}
  }
  \draw (25,10) -- (30,10);
  \draw (35,-5) -- (40,-5);
  \draw[ultra thick] (33,7) -- (23,7) -- (23,-3) -- (33,-3) -- (33,7) -- (43,7) -- (43,-3) -- (33,-3);
  \end{tikzpicture}
  \end{center}

  Then $\Pi_\sigma$ (which is only a positroid variety, not interval
  positroid) is $K^S$-homologous to $X^\lambda \oplus X^\mu$.

  In each shaded region (namely $J,K,J',K'$) in this figure, the dots
  run NW/SE, and only these regions have dots.
\end{Proposition}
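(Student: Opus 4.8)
Recall that $X^\lambda\oplus X^\mu$ denotes the image of $X^\lambda\times X^\mu$ under the reindexed direct sum map $\Gr{a}{b+a}\times\Gr{c}{d+c}\to\Gr{a+c}{b+d+c+a}$, the columns of the ambient space being grouped, in order, as the $b$ weight-one columns of the first factor, the $d$ weight-one and the $c$ weight-zero columns of the second, and the $a$ weight-zero columns of the first. This map is $S$-equivariant by the Theorem of \S1.2, and it is a closed embedding: it is injective (one recovers $V$ and $W$ from $V\oplus W$ by intersecting with the coordinate subspaces spanned by the corresponding blocks), and on tangent spaces $\operatorname{Hom}(V,\C^{b+a}/V)\oplus\operatorname{Hom}(W,\C^{d+c}/W)$ includes into $\operatorname{Hom}(V\oplus W,\C^{b+d+c+a}/(V\oplus W))$ as the two ``diagonal'' summands. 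Hence $X^\lambda\oplus X^\mu$ is an irreducible, reduced, $S$-stable closed subvariety of $\Gr{a+c}{b+d+c+a}$. Since positroid varieties are reduced and $T$-stable (hence $S$-stable) by \cite{KLS}, and both $\Pi_\sigma$ and $X^\lambda\oplus X^\mu$ carry their reduced structures, it suffices to prove the equality of subvarieties $\Pi_\sigma=X^\lambda\oplus X^\mu$; equality of $K^S$-classes then follows at once.

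To prove $\Pi_\sigma=X^\lambda\oplus X^\mu$ I will compute the bounded juggling pattern of a point of the dense open cell of $X^\lambda\oplus X^\mu$. Choose $V=\rowspan(M)$ in the Schubert cell of $X^\lambda$ and $W=\rowspan(N)$ in that of $X^\mu$, so that $J_M$ equals the affine permutation with window $J$ and $J_N$ that with window $K$. Let $\widehat M$ be the $(a+c)\times(b+d+c+a)$ matrix obtained from $\begin{pmatrix}M&0\\0&N\end{pmatrix}$ by reordering its columns into the grouping above; then $\rowspan(\widehat M)=V\oplus W$ in the reindexed coordinates. Because the rows of $\widehat M$ coming from $M$ vanish on the columns coming from $N$ and vice versa, every linear dependence among columns of $\widehat M$ involves only $M$-columns or only $N$-columns, so the computation of $J_{\widehat M}(i)=\min\{j\ge i:\vec v_i\in\operatorname{span}(\vec v_{i+1},\dots,\vec v_j)\}$ splits into two independent parts.

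On the $d+c$ indices carrying the columns of $N$ --- which form one contiguous block, in their original cyclic order $d$ then $c$ --- the juggling pattern restricts to exactly that of $N$, i.e.\ (the relevant part of) $K$. The $b+a$ columns coming from $M$ instead occupy the front $b$ positions and the back $a$ positions of the window: read cyclically, starting just after the $a$-block, one meets the $M$-columns in the order ``$a$-block then $b$-block'', the reverse of their order in $\C^{b+a}$. Reversing a cyclic word is the operation $J\mapsto w_0J^Tw_0$ on affine permutations (the $w_0$ absorbing the endpoint bookkeeping), so on these indices the juggling pattern is $w_0J^Tw_0$. Assembling the two halves inside a single window, forming the unique extension whose East triangle runs NW/SE (\cite[Proposition~2.1]{K}), and applying the rotation by $a$ that restores the standard window of $\Gr{a+c}{b+d+c+a}$, one obtains exactly the affine permutation $\sigma$ of the statement; in particular the only dots lie in the four blocks $J,K$ (West half) and their NW/SE completions $J',K'$ (East half), each running NW/SE, while after the rotation the East triangle of $\sigma$ is no longer NW/SE --- this is why $\Pi_\sigma$ is a positroid but not interval positroid variety. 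Since this holds for every point of the open cell, $X^\lambda\oplus X^\mu\subseteq\Pi_\sigma$; as $X^\lambda\oplus X^\mu\cong X^\lambda\times X^\mu$ is irreducible of dimension $\dim X^\lambda+\dim X^\mu$, which a dimension count identifies with $\dim\Pi_\sigma$, the two varieties coincide.

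\textbf{Main obstacle.} The conceptual picture --- the second summand sits contiguously and appears untouched, while the first summand wraps around the cyclic window and so appears cyclically reversed, i.e.\ transposed --- is transparent; the work is the precise index bookkeeping verifying that the $w_0$-conjugation, the forward rotation by $a$, and the NW/SE East-triangle completion fit together so that no spurious dots appear and the four blocks occupy exactly the positions drawn. This is the step where one genuinely needs the uniqueness of the East-triangle extension from \cite{K} rather than an ad hoc check.
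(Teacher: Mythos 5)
Your proposal asserts an equality of subvarieties $\Pi_\sigma = X^\lambda\oplus X^\mu$, but the proposition only claims they are $K^S$-homologous, and for good reason: the equality is false in general. The paper's proof identifies $\Pi_\sigma$ with $\overleftarrow{X}^\lambda\oplus X^\mu$, where $\overleftarrow{X}^\lambda$ is the image of $X^\lambda$ under the $S$-equivariant automorphism of $\Gr{a}{b+a}$ that reverses the column order \emph{within} each of the two blocks $B$ and $A$; since that automorphism lies in the connected centralizer of $S$, the two direct-sum varieties have the same $K^S$-class without coinciding. A minimal counterexample to your stronger claim: take $a=1$, $b=2$, $\lambda=010$, and $\mu$ trivial. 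Then $X^\lambda=\{[0:*:*]\}\subset\Gr{1}{3}$ has bounded juggling pattern $1\mapsto 1,\ 2\mapsto 3,\ 3\mapsto 5$, whereas running the construction of the proposition gives $\sigma = (1\mapsto 3,\ 2\mapsto 2,\ 3\mapsto 4)$, so $\Pi_\sigma=\{[*:0:*]\}=\overleftarrow{X}^\lambda\neq X^\lambda\oplus X^\mu$.

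The source of the error is the sentence claiming that the $M$-columns, read cyclically in the big window, appear ``in the reverse of their order in $\C^{b+a}$,'' together with the assertion that reversal of a cyclic word is the operation $J\mapsto w_0J^Tw_0$. Neither half is right. The reindexing places the $B$-block in positions $1,\dots,b$ and the $A$-block in the last $a$ positions, each block keeping its internal order; as a cyclic word the $M$-columns are therefore a \emph{cyclic rotation} (by $a$, equivalently by $b$) of their original order, not a reversal. A cyclic rotation of columns merely rotates the bounded juggling pattern; it never produces the transposition $w_0J^Tw_0$. (And even an honest reversal would not act by $w_0J^Tw_0$ alone, since $J^T=J^{-1}$ contributes an inversion as well.) Carried out correctly, your generic-point computation yields the juggling pattern of $X^\lambda\oplus X^\mu$ itself, which, as the example shows, is not $\sigma$. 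To reach $\sigma$ one must first replace $X^\lambda$ by $\overleftarrow{X}^\lambda$ --- it is this within-block reversal, composed with the cyclic placement, that manufactures $w_0J^Tw_0$ --- and then supply the separate argument that this replacement preserves the $K^S$-class. Your proof is missing both steps, and its concluding equality of varieties cannot be repaired; only the weaker ``$K^S$-homologous'' statement survives.
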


\begin{proof}
  First we consider the $\sigma$ pictured, then show how to arrive 
  at it geometrically.

  Since $J$ and $K$ have dots in all their top rows ($b$ and $d$ dots
  respectively), when we use $w_0 J^T w_0 \oplus K$ as the West half
  of an affine permutation matrix, we know that the East half will
  have no dots East of the North part of $K$ nor North of the 
  East part of $w_0 J^T w_0$. But also, the gray region $J'$
  East of the East part of $w_0 J^T w_0$ must have $a$ dots in it
  (since $w_0 J^T w_0$ misses that many rows out of $a+b$), 
  so has a dot in every column. Hence there are no dots South of $J'$.
  Similarly $K'$ (above $K$) has $c$ dots, and none West of it.
  We have shown that $\sigma$ has the form pictured.

  On the geometric side, Schubert varieties $X^\lambda$ and $X^\mu$ are defined by rank inequalities
  on points $\rowspan \left[ \begin{array}{c|c} B & A \end{array} \right] \in \Gr{a}{b+a}$ 
  and $\rowspan \left[ \begin{array}{c|c} D & C \end{array} \right] \in \Gr{c}{d+c}$ 
  where $B$, $A$, $D$ and $C$ are block matrices of sizes $b \times a$, $a \times a$, $d \times c$ and $c \times c$, respectively. 
  Since the weight of the $S$-action on $\Gr{c}{d+c}$ is 1 on block $B$ and 0 on block $A$, 
  flipping the numbering of columns within blocks $B$ and $A$ is an $S$-equivariant automorphism 
  \[
    \rowspan \left[ \begin{array}{c|c} B & A \end{array} \right] \mapsto 
    \rowspan \left[ \begin{array}{c|c} \overleftarrow{B} & \overleftarrow{A} \end{array} \right]
  \]
  of $\Gr{a}{b+a}$. We denote by $\overleftarrow{X}^\lambda$ the image of $X^\lambda$ under this automorphism. 
  Essential conditions on points in $X^\lambda$ only involve initial intervals columns starting in the first column. 
  Essential conditions on their flipped counterparts  
  therefore involve periodic intervals going from column $b$ to the left, 
  then wrapping around the matrix and going from the last column to the left. 
  
  We obtain the variety $\overleftarrow{X}^\lambda \oplus X^\mu$ 
  by imposing the rank inequalities defining $\overleftarrow{X}^\lambda$ and $X^\mu$, respectively, on the columns of matrix
  \[
    \left[ \begin{array}{c|c|c|c}
    \overleftarrow{B} & 0 & 0 & \overleftarrow{A} \\
    \hline
    0 & D & C & 0
    \end{array}\right] .
  \]
  Essential conditions on the blocks $\left[ \begin{array}{c|c} D & C \end{array}\right]$ of the matrix 
  are implemented by the block 
  \begin{center}
  \begin{tikzpicture}[scale=.34]
    \fill[Turquoise] (0,5) -- (2,3) -- (5,3) -- (5,5) -- (0,5);
    \draw (0,5) -- (5,0) -- (5,5) -- (0,5);
    \draw (2,3) -- (5,3);
    \draw (2.7,4) node[] {\tiny$K$};
    \draw (.2,3.8) node[] {\tiny$d$};
    \draw (2.7,1.3) node[] {\tiny$c$};
    \draw [ultra thick] (-2,7) -- (-2,-3) -- (8,-3) -- (8,7) -- (-2,7);
    \draw (-2.5,7) -- (-3.5,7); 
    \draw (-2.5,5) -- (-3.5,5); 
    \draw (-3,6) node[] {\tiny$b$};
  \end{tikzpicture}
  \end{center}
  of $\sigma$. They only concern honest intervals ranging in columns $b+1,\ldots,b+d+c$ in the matrix. 
  Essential conditions on the complementary blocks of the matrix also involve periodic intervals. 
  They are implemented by the block 
  \begin{center}
  \begin{tikzpicture}[scale=.34]
  \foreach \offset in {(0,5),(10,-5)}
  {
    \begin{scope}[shift={\offset}]
    \fill[Melon] (3,2) -- (3,5) -- (5,5) -- (5,0) -- (3,2);
    \draw (0,5) -- (5,0) -- (5,5) -- (0,5);
    \draw (3,2) -- (3,5);
    \draw (4,2.5) node[rotate=-90] {\reflectbox{\tiny$J$}};
    \draw (.5,3.5) node[] {\tiny$a$};
    \draw (3.5,.5) node[] {\tiny$b$};
    \end{scope}
  }
  \draw[ultra thick] (13,7) -- (3,7) -- (3,-3) -- (13,-3) -- (13,7);
  \end{tikzpicture}
  \end{center}
  of $\sigma$. 
  Therefore, 
  \[
    \Pi_\sigma = \overleftarrow{X}^\lambda \oplus X^\mu 
  \]
  is $K^S$-homologous to $X^\lambda \oplus X^\mu$, and a positroid variety, not interval positroid. 
\end{proof}

Though it is not obvious why yet, 
it will turn out also to be useful to backward-rotate by $b$, thus working with matrices 
\[
  \left[ \begin{array}{c|c|c|c}
      \overleftarrow{A} & \overleftarrow{B} & 0 & 0 \\
  \hline
  0 & 0 & D & C
  \end{array}\right] .
\]
defining points in the Richardson variety associated to the affine permutation matrix
\begin{center}
\begin{tikzpicture}[scale=.34]
  \foreach \offset in {(25,0)}
  {
    \begin{scope}[shift={\offset}]
    \fill[Turquoise] (0,5) -- (2,3) -- (5,3) -- (5,5) -- (0,5);
    \draw (0,5) -- (5,0) -- (5,5) -- (0,5);
    \draw (2,3) -- (5,3);
    \draw (2.7,4) node[] {\tiny$K$};
    \draw (.2,3.8) node[] {\tiny$d$};
    \draw (2.7,1.3) node[] {\tiny$c$};
    \end{scope}
  }
  \foreach \offset in {(20,5)}
  {
    \begin{scope}[shift={\offset}]
    \fill[Melon] (3,2) -- (3,5) -- (5,5) -- (5,0) -- (3,2);
    \draw (0,5) -- (5,0) -- (5,5) -- (0,5);
    \draw (3,2) -- (3,5);
    \draw (4,2.5) node[rotate=-90] {\reflectbox{\tiny$J$}};
    \draw (.5,3.5) node[] {\tiny$a$};
    \draw (3.2,.8) node[] {\tiny$b$};
    \end{scope}
  }
  \foreach \offset in {(30,5)}
  {
    \begin{scope}[shift={\offset}]
    \fill[Gray] (3,2) -- (3,0) -- (0,0) -- (0,5) -- (3,2);
    \draw (0,5) -- (0,0) -- (5,0) -- (0,5);
    \draw (3,2) -- (3,0);
    \draw (1.4,1.8) node[rotate=-90] {\reflectbox{\tiny$J'$}};
    \end{scope}
  }
  \foreach \offset in {(35,0)}
  {
    \begin{scope}[shift={\offset}]
    \fill[Gray] (2,3) -- (0,3) -- (0,0) -- (5,0) -- (2,3);
    \draw (0,5) -- (0,0) -- (5,0) -- (0,5);
    \draw (2,3) -- (0,3);
    \draw (1.8,1.3) node[] {\tiny$K'$};
    \end{scope}
  }
  \draw (25,10) -- (30,10);
  \draw (30,0) -- (35,0);
\end{tikzpicture}
\end{center}
\noindent which we call $\sigma'$.
Of course this only gives the same class in $K^S(\Gr{a+c}{b+d+c+a})$ if we also
change the $S$-action, now to be weight $0$ on the first $a$
and last $c$ coordinates, and weight $1$ on the $b+d$ coordinates in the middle. 

\subsection{The bottom $b+d+c$ rows, of which $d+c$ are trivial}
\label{ssec:bottombdcrows}

Our goal now is to calculate the class 
$[\Pi_{\sigma'}] \in K^S(\Gr{a+c}{a+b+d+c})$,
where $S$ acts with weight $0$ on the first $b$
and last $d$ coordinates, and weight $1$ on the $a+c$ coordinates in the middle.
If the weights were monotonic, we could use theorem \ref{thm:partialKIP}
directly; since they instead go 
$0^a 1^{b+d} 0^c$ it will take three steps, this being the first.
To emphasize: this nonmonotonicity is the reason that this paper is not a
straightforward corollary of \cite{K}.

\begin{Theorem}\label{thm:bpartial}
  Let $\sigma'$ be constructed from $J$ and $K$ as pictured above,
  and $S$ act on $\complexes^{a+b+d+c}$ with weight $1$ on the $b$ and
  $d$ groups of coordinates. 
  Then $\sigma'$ is $(a+b)$-sorted, 
  so there exists a unique $(a+b)$-slice $s$ with no $Q$-labels
  such that $g(s) = \sigma'$.
  Let $\calP := \{$the $(a+1,a+b)$-partial pipe dreams $P$ 
  with $s_{a+b}(P) = s\}$ (which will also have no $Q$s). Then
  $$ [\Pi_{\sigma'}] = \sum_{P \in \calP} \weight_S(P)\ [\Pi_{g(s_a(P))}] $$
  as classes in $K^S$, where we insist too that any equivariant tiles 
  only occur in the rightmost $c$ columns.
\end{Theorem}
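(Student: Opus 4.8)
The plan is to derive the formula by restricting to the circle $S$ the $K^T$-expansion of Theorem~\ref{thm:partialKIP}, run from the $(a+b)$-slice $s$ down to level $a$ (so that the tiled region is the $b$ rows $[a+1,a+b]$, the remaining $d+c$ rows below row $a+b$ being forced trivial, which is why starting from an $(a+b)$-slice suffices). Before any of that, one must exhibit the starting slice, i.e. show $\sigma'$ is $(a+b)$-sorted. Unwinding the rotated-direct-sum construction of $\sigma'$ from \S\ref{ssec:tostart}: its West triangle carries $n-(a+c)=b+d$ dots, namely the $b$ dots of $w_0J^Tw_0$ (the transpose of $X^\lambda$'s pattern) and the $d$ dots of $K$ (the pattern of $X^\mu$); since $\Pi_{\sigma'}$ is a Richardson variety all of these run NW/SE, and since $X^\mu$ is a Schubert variety its $d$ dots are flush north in $K$'s triangle, which places them inside $\sigma'$ exactly in rows $a+b+1,\dots,a+b+d$. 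That is the definition of $(a+b)$-sortedness.

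By Lemma~\ref{lem:slicetoperm} and the unique-extension statement \cite[Prop.~2.1]{K}, an $(a+b)$-sorted bounded juggling pattern with NW/SE East triangle equals $g$ of a unique $(a+b)$-slice once one splits its upper dots into $R$-dots and $Q$-dots each running NW/SE; as all dots of $\sigma'$ above row $a+b$ already run NW/SE, the all-$R$ split is legal and gives the unique $(a+b)$-slice $s$ with $g(s)=\sigma'$ and no $Q$-labels. Inspecting the lower $K$-tiles one sees no tile can introduce a $Q$ on an edge unless a $Q$ already feeds it; hence every partial pipe dream in $\calP$, and in particular each $s_a(P)$, is $Q$-free.

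Next, apply Theorem~\ref{thm:partialKIP} over the big torus $T$ to $\Pi_{g(s)}=\Pi_{\sigma'}$ with cut at level $a$:
\[
  [\Pi_{\sigma'}]=\sum_{P\in\calP}(-1)^{\fusing(P)}\,\weight_K(P)\,[\Pi_{g(s_a(P))}]\qquad\text{in }K^T(\Gr{a+c}{a+b+d+c}),
\]
and restrict along $S\hookrightarrow T$. Positroid (in particular Schubert) classes restrict to the corresponding $S$-equivariant classes, while $K_0^T(\pt)\to K_0^S(\pt)$ sends $\exp(y_\ell)\mapsto\exp(w_\ell t)$ for $w=0^a1^{b+d}0^c$ the $S$-weight string on $\C^{a+b+d+c}$ attached to $\sigma'$; under it the fusor factor $\exp(y_{a+b}-y_a)$ becomes $\exp(t)$ and the equivariant factor $1-\exp(y_{a+b}-y_a)$ becomes $1-\exp(t)$, so $\weight_K(P)$ specializes to $\weight_S(P)$. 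One then checks from the tile list, using that $s$ is $Q$-free and that the weight string is constant $1$ on the columns $[a+1,a+b+d]$, that no fusor or displacer tile can occur when filling rows $[a+1,a+b]$ — the only $K$-flavoured tile available there is the equivariant elbow — so $\fusing(P)=0$ throughout and the sign disappears, leaving the asserted identity.

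Finally, an equivariant elbow (all $0$ labels) is legal, in the bookkeeping of \cite{K}, only across a spot where the two straddled coordinates carry different $S$-weights, and in an illegal location it contributes $0$ by the convention recalled with $\weight_S$; since rows $[a+1,a+b]$ all lie in the weight-$1$ block and the only weight-$0$ columns among $[a+1,n]$ are the last $c$ (the $C$-block of $\mu$), surviving equivariant tiles lie only there. The same feature of $\sigma'$ — no West-triangle dots in the $D$- or $C$-blocks — forces the $d+c$ rows below row $a+b$ to be tiled entirely by crossings, which both justifies starting from an $(a+b)$-slice and is the assertion ``of which $d+c$ are trivial''. I expect the main obstacle to be the reindexing: tracking the $S$-weight string (hence the sign of $t$) through the transpose, the direct sum, and the forward-then-backward rotations producing $\sigma'$, and at the same time pinning down which tile positions are legal, so that the specialization $\weight_K\mapsto\weight_S$ and the ``rightmost $c$ columns'' statement are mutually consistent; everything else is bookkeeping inside \cite{K}.
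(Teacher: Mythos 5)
Your overall route is the same as the paper's: check $(a+b)$-sortedness from the block structure of $\sigma'$, extract the unique $Q$-free slice via Lemma \ref{lem:slicetoperm}, apply Theorem \ref{thm:partialKIP} over the big torus $T$, and then specialize $y_\ell \mapsto t$ on the $b,d$ blocks and $y_\ell\mapsto 0$ on the $a,c$ blocks, which kills equivariant tiles outside the rightmost $c$ columns and turns $\weight_K$ into $\weight_S$. All of that is fine, and your verification of sortedness is more explicit than the paper's ``manifest.''

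There is, however, one genuinely false step: the claim that ``no fusor or displacer tile can occur when filling rows $[a+1,a+b]$,'' so that $\fusing(P)=0$ and the sign $(-1)^{\fusing(P)}$ disappears. Displacer tiles and fusor tiles with nonempty words are purely combinatorial $K$-theoretic tiles; their admissibility has nothing to do with the $S$-weight string being constant on those columns or with the absence of $Q$-labels (note the displacer/fusor pair in the paper's second example carries only $R$'s and $1$'s). Indeed the paper's own worked example for $X^{\lambOneOne}\oplus X^{\lambOneNaught}$ exhibits $K$-DS pipe dreams whose \emph{lower} halves --- which are exactly the elements of $\calP$ in this theorem --- contain a displacer and a fusor with $W=R$, hence $\fusing(P)=1$, and the resulting coefficient $-\exp(t)$ in $K^S_0(\Gr{4}{7})$ shows the sign really is there. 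So your purported check ``from the tile list'' would fail, and the sign cannot be discharged this way. (The statement of the theorem as printed also omits $(-1)^{\fusing(P)}$, which is most plausibly a typo given Theorem \ref{thm:PSPipeDreams} and the examples; but either way your justification for its absence is not valid.) Separately, be aware that your index bookkeeping $\exp(y_j-y_i)\mapsto\exp(t)$ has the sign of $t$ backwards for a tile with row in the weight-$1$ block and column in the weight-$0$ block (one gets $\exp(-t)$); you flagged this reindexing issue yourself, and the paper is equally cavalier about it, but it does need to be pinned down for the specialization to literally produce $\weight_S$.
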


\begin{proof}
  The $(a+b)$-sortedness is manifest, and we apply lemma \ref{lem:slicetoperm}
  to obtain $s$, then apply theorem \ref{thm:partialKIP} to get the sum.

  However, that theorem gives the $T^n$-equivariant $K$-class of $\Pi_{\sigma'}$,
  not the coarser $S$-equi\-variant class.
  To get the $K^S$-class, we specialize the $y_i$ for the $b$ and $d$
  groups of coordinates to $t$, and those for the $a$ and $c$ to $0$.
  That kills any summand with an equivariant piece outside the 
  rightmost $c$ columns, and otherwise specializes $\weight_K(P)$ to $\weight_S(P)$.
\end{proof}

We could have applied theorem \ref{thm:partialKIP} all the way to $i=0$, 
rather than filling in tiles only below row $i=a$, but the
specialization of that formula from $K^T$ to $K^S$ would not be
positive in the sense of \cite{AGM}, because of this nonmonotonicity.

These $(a+1,a+b)$-partial pipe dreams from this theorem
will be the lower halves of the DS pipe dreams appearing in our main theorem
\ref{thm:PSPipeDreams}.

\subsection{Duality at mid-sort}\label{ssec:midsort}

Before going to $i=0$ in theorem \ref{thm:partialKIP} we make use
of Grassmannian duality.

\begin{Proposition}\label{prop:midsortduality}
  Consider the $(a+1,a+b)$-partial pipe dreams $\calP$ from theorem
  \ref{thm:bpartial}.  For each $P \in \calP$, not only is
  $\Pi_{g(s_a(P))}$ interval positroid, but its rotation by $a$ is
  \emph{dual} interval positroid, and that dual is $a$-sorted.
\end{Proposition}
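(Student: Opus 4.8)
The plan has three stages: dispatch the interval‑positroid part, then use Proposition~\ref{prop:duality} to reduce the two rotational assertions to explicit statements about the West and East triangles of the rotated pattern, and finally verify those statements.

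\textbf{Stage 1.} By construction $g(s_a(P))$ is the unique extension with NW/SE East triangle of the $a$-sorted upper‑triangular partial permutation attached to the $a$-slice $s_a(P)$ (Lemma~\ref{lem:slicetoperm} together with \cite[Proposition 2.1]{K}); thus $\Pi_{g(s_a(P))}$ is interval positroid. The same construction shows $g(s_a(P))$ is \emph{itself} $a$-sorted: the West‑triangle dots strictly below row $a$ are precisely the dots laid NW/SE beneath the $0$-labels on the South edge of $s_a(P)$, hence occupy rows $a{+}1, a{+}2, \dots$ flush to the top and run NW/SE, while the remaining West‑triangle dots, in rows $1, \dots, a$, are the $R$-, $Q$- and $1$-dots produced by the top‑half filling.

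\textbf{Stage 2.} First I would note that $\mathrm{rot}_a$ commutes with the duality $D$ of Proposition~\ref{prop:duality}, since $(\mathrm{rot}_a J)^{-1} = \mathrm{rot}_a(J^{-1})$. By Proposition~\ref{prop:duality}, passing to the dual sends each East‑triangle dot $(j, J(j))$ of a pattern $J$ to the West‑triangle dot $(J(j)-n,\, j)$, and symmetrically; this ``transpose‑and‑shift'' is monotone in each coordinate, so it carries NW/SE chains to NW/SE chains and flush families to flush families, exchanging ``flush to the top'' with ``flush to the bottom.'' Writing $G := \mathrm{rot}_a\, g(s_a(P))$, the two remaining claims become: (i) the West triangle of $G$ runs NW/SE (equivalent to ``$\Pi_G$ is dual interval positroid''), and (ii) the dots of the East triangle of $G$ sitting in the columns just right of the West/East cut form a flush NW/SE family, so that transpose‑and‑shift turns them into the West‑triangle dots of $D(\Pi_G)$ in rows $a{+}1, a{+}2, \dots$ (equivalent to ``$D(\Pi_G)$ is $a$-sorted'').

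\textbf{Stage 3, and the main obstacle.} Rotating $g(s_a(P))$ by $a$ carries its rows $1, \dots, a$ to the last $a$ rows of $G$ (via periodicity), its rows $a{+}1, \dots, n$ to the first $n{-}a$ rows, and slides the West/East cut over by $a$ columns. Tracking the dots, the West triangle of $G$ is the union of the $a$-sorted $0$-dots of $g(s_a(P))$ (flush, NW/SE by Stage~1), the dots of the NW/SE East triangle of $g(s_a(P))$ lying in columns $n{+}1, \dots, n{+}a$ (wrapped around by the rotation), and any dots in rows $1, \dots, a$ of $g(s_a(P))$ whose columns do not exceed $a$; the East triangle of $G$ is assembled symmetrically from the remaining East‑triangle dots and the $R/Q/1$-dots of rows $1, \dots, a$. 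The crux is to see that these unions are honest NW/SE chains, flush where (ii) requires. For the $0$-dots together with the wrapped portion of the East triangle this follows because the latter is the \emph{unique} NW/SE completion of the former, so after rotation they splice into one chain; the delicate part is the contribution of the rows‑$1,\dots,a$ dots, and here I would use the hypotheses on $\sigma'$ — it is a Richardson variety built from $w_0 J^T w_0 \oplus K$, so its West triangle runs NW/SE with shape pinned down by $\lambda$ and $\mu$, and the $(a{+}1, a{+}b)$-partial pipe dream $P$ of Theorem~\ref{thm:bpartial} never touches rows $\le a$, so the $R/Q/1$-data of $s_a(P)$ in those rows is that of $s = s_{a+b}$ and is constrained accordingly. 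The hardest step, where I expect the real work to lie, is precisely this bookkeeping: deciding which dots of $g(s_a(P))$ fall West and which fall East of the shifted cut, and then deducing the required NW/SE and flushness from Richardson‑ness of $\sigma'$ together with the locality of the partial pipe dream expansion. Matching index ranges across the rotation by $a$ and the $180^\circ$ duality rotation is routine but must be carried out carefully, so that ``flush at the top of rows $1, \dots, a$'' for $g(s_a(P))$ lands as ``flush at rows $a{+}1, a{+}2, \dots$'' for the dual.
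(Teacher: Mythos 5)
Your Stages 1 and 2 track the paper's proof: one first pins down the shape of $g(s_a(P))$, then observes that the West square of the $a$-rotated window running NW/SE is exactly ``the dual is interval positroid'' (via Proposition \ref{prop:duality}), and that a flushness property of the remaining dots gives $a$-sortedness of the dual. But Stage 3 is where the entire content of the proposition lives, and you do not carry it out --- you write that you ``would use'' certain hypotheses and that you ``expect the real work to lie'' in the bookkeeping. That is a genuine gap, not a routine verification: without it, neither the NW/SE property of the rotated West square nor the flushness needed for $a$-sortedness has been established.

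Concretely, what is missing is the paper's decomposition of the dots of $g(s_a(P))$ into four NW/SE blocks $L,M,N,U$ ($L$ = West-triangle dots in rows $1,\dots,a$; $N$ = the flush-top NW/SE dots below row $a$ coming from the $0$-labels of the slice; $M,U$ = the East-triangle dots above and below $N$'s rows), together with the observation that \emph{every} row of $N$ and $U$ carries a dot. The two levers for this are not the ones you reach for. First, Theorem \ref{thm:bpartial} guarantees the slices carry no $Q$-labels and that the labels of $s_a(P)$ below the diagonal squares $(1,1),\dots,(a,a)$ are all $0$s; by Lemma \ref{lem:slicetoperm} a slice with only $R$-letters produces a \emph{single} NW/SE chain of dots in the top $a$ rows, which is what makes $L$ (and hence the rotated West square, spliced with $N$ and part of $U$) run NW/SE. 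Your appeal to ``Richardson-ness of $\sigma'$'' does not directly control the rows-$\le a$ dots of $g(s_a(P))$, since $g(s_a(P))$ is not $\sigma'$ but the output of the degeneration through rows $a+b,\dots,a+1$; it is the no-$Q$ property of the slice that does the work. Second, the definition of $g$ as the unique NW/SE East-triangle completion forces $M$ to sit West of $U$ and forces a dot in every row of $N\cup U$; it is this ``dot in every row of $U$, running NW/SE'' that, after the $180^\circ$ duality rotation of the East square, lands flush in rows $a+1,a+2,\dots$ and gives $a$-sortedness. Until these two facts are proved and the splicing across the rotated cut is checked against them, the proposition has not been established.
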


\begin{proof}
  First we look at $s_a(P)$, 
  whose labels below the squares $(1,1)\ldots (a,a)$ agree
  with those on $s_{a+b}(\sigma')$, and hence are all $0$s.
  We claim $g(s_a(P))$ has the form
%
%
  \begin{center}
  \begin{tikzpicture}[scale=.34]
    \fill[Turquoise] (10,10) -- (3,10) -- (3,7) -- (10,7) -- (10,10);
    \fill[Turquoise] (20,0) -- (16,4) -- (12,4) -- (12,0) -- (20,0);
    \fill[Melon] (10,7) -- (3,7) -- (6,4) -- (10,4) -- (10,7);
    \fill[Melon] (10,7) -- (12,7) -- (12,8) -- (10,10) -- (10,7);
    \draw (10,10) -- (0,10) -- (10,0) -- (20,0) -- (10,10) -- (10,0);
    \draw (3,10) -- (3,7) -- (12,7);
    \draw [densely dashed] (12,7) -- (12,8);
    \draw [densely dashed] (12,4) -- (12,0);
    \draw [densely dashed] (12,4) -- (16,4);
    \draw [densely dashed] (6,4) -- (10,4);
    \draw (6,8.3) node[] {\tiny$L$};
    \draw (10.7,8.3) node[] {\tiny$M$};
    \draw (7,5.5) node[] {\tiny$N$};
    \draw (14.5,2) node[] {\tiny$U$};
    \draw (-.5,10) -- (-1.5,10); 
    \draw (-.5,7) -- (-1.5,7); 
    \draw (-1,8.5) node[] {\tiny$a$};
  \end{tikzpicture}
  \end{center}
  \noindent (where $L$ and $M$ have height $a$ and the dashed lines are to indicate that the heights of
  $N$ and $U$ and the widths of $M$ and $U$ are not determined by $a,b,c,d$), with dots NW/SE in each of the
  shaded regions $L,M,N,U$ (and only there). Why?
  \begin{itemize}
  \item $g($an $a$-slice$)$ is $a$-sorted, 
    so $N$ is NW/SE and there are no dots East of $N$.
  \item The $a$-slice only has one kind of letter, $R$, so the dots 
    in $L$ must run NW/SE.
  \item The East half is NW/SE by the definition of $g()$. Hence if
    $M$ is the region above $N$'s rows, and $U$ the region below,
    then $M$ must be West of $U$ and $M,U$ must each run NW/SE.
  \end{itemize}
  Moreover, every row of $N$ and $U$ have dots, since those are the
  only places where the dots could be in those rows.

  Now consider the window on $g(s_a(P))$ rotated forward by $a$:


  \begin{center}
  \begin{tikzpicture}[scale=.34]
    \foreach \offset in {(0,0),(10,-10)}
    {
    \begin{scope}[shift={\offset}]
      \fill[Turquoise] (10,10) -- (3,10) -- (3,7) -- (10,7) -- (10,10);
      \fill[Melon] (10,7) -- (12,7) -- (12,8) -- (10,10) -- (10,7);
      \draw (6,8.3) node[] {\tiny$L$};
      \draw (10.7,8.3) node[] {\tiny$M$};
    \end{scope}
    }
    \fill[Turquoise] (20,0) -- (16,4) -- (12,4) -- (12,0) -- (20,0);
    \fill[Melon] (10,7) -- (3,7) -- (6,4) -- (10,4) -- (10,7);
    \draw (10,10) -- (0,10) -- (10,0) -- (20,0) -- (10,10) -- (10,0);
    \draw (20,0) -- (23,-3);
    \draw (10,0) -- (13,-3);
    \draw (3,10) -- (3,7) -- (12,7);
    \draw [densely dashed] (12,7) -- (12,8);
    \draw [densely dashed] (12,4) -- (12,0);
    \draw [densely dashed] (12,4) -- (16,4);
    \draw [densely dashed] (6,4) -- (10,4);
    \draw [densely dashed] (22,-3) -- (22,-2);
    \draw (20,-3) -- (20,0);
    \draw (7,5.5) node[] {\tiny$N$};
    \draw (14.5,2) node[] {\tiny$U$};
    \draw (-.5,10) -- (-1.5,10); 
    \draw (-.5,7) -- (-1.5,7); 
    \draw (-1,8.5) node[] {\tiny$a$};
    \draw [ultra thick]  (13,7) -- (3,7) -- (3,-3) -- (13,-3) -- (13,7) -- (23,7) -- (23,-3) -- (13,-3);
  \end{tikzpicture}
  \end{center}

  The dots in the West square run NW/SE, which says that the {\em dual}
  (obtained by rotating the two-square domino by $180^\circ$,
  as explained in proposition \ref{prop:duality}) is an interval
  positroid variety. Moreover, since $U$ runs NW/SE with a dot in
  every row, when we rotate (to dualize) that East square to $\sigma''$,
  \begin{center}
  \begin{tikzpicture}[scale=.34]
    \fill[Turquoise] (3,10) -- (3,7) -- (7,3) -- (11,3) -- (11,7) -- (10,7) -- (10,10) -- (3,10);
    \fill[Melon] (1,9) -- (1,10) -- (3,10) -- (3,7) -- (1,9);
    \draw (3,10) -- (3,7) -- (11,7);
    \draw [densely dashed] (11,3) -- (7,3); 
    \draw [densely dashed] (11,3) -- (11,7); 
    \draw [densely dashed] (1,10) -- (1,9);
    \draw (7,8.7) node[rotate=180] {\tiny$L$};
    \draw (2.3,8.7) node[rotate=180] {\tiny$M$};
    \draw (8.5,5) node[rotate=180] {\tiny$U$};
    \draw (0,10) -- (10,0);
    \draw [ultra thick] (0,10) -- (0,0) -- (10,0) -- (10,10) -- (0,10);
    \draw (-.5,10) -- (-1.5,10); 
    \draw (-.5,7) -- (-1.5,7); 
    \draw (-1,8.5) node[] {\tiny$a$};
  \end{tikzpicture}
  \end{center}
  \noindent  we get something $a$-sorted.
\end{proof}

The duality isomorphism $\Gr{a+c}{b+d+c+a} \iso \Gr{b+d}{b+d+c+a}$
is $S$-equivariant if in the latter coordinates, $S$ acts now with 
weight $-1$ on the $b$ and $d$ groups of coordinates (and $0$ on
the others). It remains to compute the classes of
these $\{\Pi_{s''}\}$ in $K^S(\Gr{b+d}{b+d+c+a})$.

\subsection{The final $a$ rows}\label{ssec:finalrows}

\begin{Proposition}\label{prop:finalarows}
  Fix an $(a+1,a+b)$-partial pipe dream $P\in \calP$ from theorem
  \ref{thm:bpartial}, giving the rotated dual $\sigma''$ from the end
  of proposition \ref{prop:midsortduality}.  Split the dots in its
  upper half into $Q$-dots in $M$, $R$-dots in $L$, and let $s$ be
  the corresponding $a$-slice (as in lemma \ref{lem:slicetoperm}).

  Let $\calP'$ be the set of $(0,a)$-partial pipe dreams $P''$
  such that $s_a(P'') = s$, 
  where we use $180^\circ$ rotations of the lower $K$-tiles
  and we insist that any equivariant tiles only occur in the rightmost $b+d$ columns. 

  If we flip the rows of $P'' \in \calP'$ left-right while trading $0$s 
  for $1$s, then the labels below the $a$th row of $P''$ match
  the labels below the $a$th row of the original $P$.
\end{Proposition}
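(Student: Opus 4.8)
The plan is to treat this as a bookkeeping identity about how the South-edge labels of an $a$-slice transform under the ``rotate forward by $a$, then dualize'' operation of Propositions \ref{prop:duality} and \ref{prop:midsortduality}, using the explicit slice-to-juggling-pattern dictionary recalled in \S\ref{ssec:KTformula}. Conceptually, $g(s_a(P))$ (produced by the lower half $P$) and $\sigma''$ (the source of the upper half $P''$) describe the same variety transported by the duality isomorphism $D$ together with a rotation, so the interface labels at the cut at row $a$ are determined on both sides by the same dot pattern, read in the two orientations; the claim is exactly that $D$ plus rotation acts on this boundary datum by ``reverse and complement'', which on $\{0,1\}$-labels is the left--right flip together with the $0\leftrightarrow 1$ trade.

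First I would make the dictionary explicit at row $a$. By the construction preceding Lemma \ref{lem:slicetoperm}, the labels on the $n+1-a$ horizontal edges below row $a$ of $s_a(P)$ record, column by column, the West-triangle dot pattern of $g(s_a(P))$, which by Proposition \ref{prop:midsortduality} consists of the NW/SE blocks $L,M$ in rows $[1,a]$ and $N,U$ in rows $>a$. A column sitting under a diagonal block (columns $\le a$) carries $0$, as already observed in the proof of Proposition \ref{prop:midsortduality}; among the remaining columns, one carries $0$ exactly when its West-triangle dot lies in $N$ or $U$ (below row $a$), and otherwise carries the label ($R$, or $1$) recording whether the dot lies in $L$ or in $M$. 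Since $P$ inherits no $Q$-labels from $\sigma'$, these are the only possibilities.

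Next I would run the same reading on $\sigma''$. By Proposition \ref{prop:duality}, the dual after rotating forward by $a$ exchanges the West and East triangles of the (rotated) juggling pattern and rotates $180^\circ$ about a point; on the row-$a$ cut this reverses the column order and complements the predicate ``column $j$ has its West-triangle dot in rows $[1,a]$'', the complementation being the Grassmann-rank complementation in Proposition \ref{prop:duality} exchanging the $(b+d)$-dimensional West triangle of $g(s_a(P))$ with the $(a+c)$-dimensional one of $\sigma''$ (this is where the size of the interval dropped out). Inspecting the two figures, $L$ is carried to the $R$-block of $\sigma''$ and $M$ to the other block of rows-$[1,a]$ dots, which the hypothesis of the Proposition pins down as the $Q$-block, while the $a$ all-$0$ columns under the diagonal blocks are carried to the rightmost $a$ columns, which become all-$1$. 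Reading off the South edge of $s=g^{-1}(\sigma'')$ by the dictionary of the previous paragraph therefore produces precisely the left--right reversal of the South edge of $s_a(P)$ with $0$ and $1$ interchanged, the $R/Q$-markings matched through the prescribed $L\mapsto R$, $M\mapsto Q$ identification; this is exactly ``flip the rows left--right while trading $0$s for $1$s''. The same figure-chase, applied to the trapezoid tiles rather than to the boundary, converts the $180^\circ$-rotated lower $K$-tiles of $P''$ into upper $K$-tiles after the flip and turns ``equivariant tiles only in the rightmost $b+d$ columns'' into the corresponding condition inherited from $P$.

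The step I expect to be the genuine obstacle is the middle one: keeping the column bijection induced by ``rotate by $a$, then dualize'' consistent with the earlier rotations of \S\ref{sec:degen}, and in particular confirming that the $M$-block behaves as claimed, so that the $R/Q$-markings produced by the prescribed split of $\sigma''$'s upper dots flip onto exactly the $R$'s and $1$'s of $s_a(P)$ with no leftover $Q$'s (recall $s_a(P)$ carries none). Once that dot-block bijection is organized, each remaining assertion is a one-line inspection, so I would write the proof as a short, figure-driven lemma anchored on the pictures in Propositions \ref{prop:duality} and \ref{prop:midsortduality}.
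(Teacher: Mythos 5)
Your overall strategy is the same as the paper's: compare the South edges of the two $a$-slices column by column, using the blocks $L,M,N,U$ and the fact that duality swaps West and East triangles with a $180^\circ$ rotation. (The paper organizes the bookkeeping you worry about in your last paragraph by rotating $s_a(\sigma'')$ back by $180^\circ$ to a slice $\tau$ whose dots literally coincide with the blocks $L,M,U$ of the second figure in Proposition \ref{prop:midsortduality}; the column correspondence is then simply $C\mapsto a+C$, with the left--right flip reappearing only at the very end.)

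However, the dictionary in your first step is wrong, and the error sits exactly at the crux of the proposition. For $s_a(P)$, a column whose dot lies in $U$ carries the label $1$, not $0$: $U$ is part of the \emph{East} triangle of $g(s_a(P))$, so such a column has no West-triangle dot at all, and the slice construction of \S\ref{ssec:KTformula} assigns $0$ only to columns whose West-triangle dot lies below row $a$, i.e.\ in $N$. (Similarly, the $M$-dots correspond to the columns $\le a$ under the diagonal blocks, not to interface columns, so ``$1$ iff the dot lies in $M$'' is misplaced.) The correct three-way matching is: $L$-columns give $R$ on both sides; $N$-columns give $0$ on $s_a(P)$ and $1$ on $s_a(\sigma'')$ (no dot there, since $N$ becomes $\sigma''$'s East triangle); $U$-columns give $1$ on $s_a(P)$ and $0$ on $s_a(\sigma'')$ (where $U$ \emph{is} a West-triangle dot below row $a$). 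The whole point is that $N$ and $U$ sit on opposite sides of the two West triangles, which is what produces the $0\leftrightarrow 1$ trade. With your dictionary a $U$-column gets $0$ on both sides, so after trading $0$s for $1$s you would get a mismatch rather than the claimed equality. A smaller inaccuracy of the same kind: the $a$ all-$0$ diagonal-block columns do not go to ``all $1$'' on the other side --- they become $Q$s followed by $1$s (the number of $Q$s being $|M|$, the number of empty rows of $L$); this lies outside the trapezoid interface the proposition compares, but it is needed for the $\lambda_2$ boundary labels, and getting it wrong is another symptom of the same confusion about which triangle each block lives in.
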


\begin{proof}
  Rather than flipping each time, it will be easier to rotate
  $\sigma''$ to better match the East half of the second figure in
  proposition \ref{prop:midsortduality}. So for short, let $\tau$ be
  this $180^\circ$ rotation of $s_a(\sigma'')$.
  We also rotate the partial permutation $\pi(s_a(\sigma''))$ by $180^\circ$, 
  thus obtaining a partial permutation $\theta$ 
  living in the lower triangle of the ambient square of $\tau$. 

  \begin{center}
  \begin{tikzpicture}[scale=.34]
    \fill[Turquoise] (20,0) -- (13,0) -- (13,-3) -- (20,-3) -- (20,0);
    \fill[Melon] (20,-3) -- (22,-3) -- (22,-2) -- (20,0) -- (20,-3);
    \draw (16,-1.7) node[] {\tiny$L$};
    \draw (20.7,-1.7) node[] {\tiny$M$};
    \draw (24,-1.7) node[WildStrawberry] {\small$\tau$};
    \draw (36,0) node[] {\small$\theta$};
    \fill[Turquoise] (20,0) -- (16,4) -- (13,4) -- (13,0) -- (20,0);
    \draw (13,0) -- (20,0) -- (13,7);
    \draw [densely dashed] (13,4) -- (16,4);
    \draw [densely dashed] (22,-3) -- (22,-2);
    \draw (20,-3) -- (20,0) -- (23,-3);
    \draw (14.5,2) node[] {\tiny$U$};
    \draw [ultra thick]  (13,7) -- (23,7) -- (23,-3) -- (13,-3) -- (13,7);
    \draw [WildStrawberry,ultra thick] (13,-3) -- (13,0) -- (20.25,0);
    \foreach \x in {0,.5,1,1.5,2,2.5}
      \draw [WildStrawberry,ultra thick] (20.25+\x,-.5-\x) -- (20.75+\x,-.5-\x);
    \draw [ultra thick]  (33,7) -- (43,7) -- (43,-3) -- (33,-3) -- (33,7);
    \draw (33,7) -- (43,-3); 
  \end{tikzpicture}
  \end{center}

  We have to inspect the duality operation to see how the labels relate.
  For each column $C \leq b+d+c$ of the rotated slice $\tau$, there is either 
  \begin{itemize}
  \item a dot in the last $a$ rows of $\theta$ (the $L$ part),
    in which case the label in column $C$ of $\tau$ is $R$,
  \item a dot higher up (the $U$ part),
    in which case the label in column $C$ of $\sigma''$ is $0$,
  \item no dot in $C$,
    in which case the label in column $C$ of $\sigma''$ is $1$.
  \end{itemize}
  Correspondingly, in column $a+C$ of $\sigma$, we have
  \begin{itemize}
  \item a dot in row $\leq a$ (the $L$ part), 
    in which case the label in column $a+C$ of $s_a(P)$ is $R$,
  \item no dot in column $a+C$ (since thinking of $U$
    above $N$, they can't both have a dot),
    in which case the label in column $a+C$ of $s_a(P)$ is $1$,
  \item a dot below row $a$ (the $N$ part),
    in which case the label in column $a+C$ of $s_a(P)$ is $0$.
  \end{itemize}

  Now consider the last $a$ columns of $\tau$ (above $M$ in the figure),
  i.e. on the diagonal. These are all $Q$s, then all $1$s.
  The number of $Q$s is the number of dots in $M$, 
  This is also the number of empty rows of $L$, each of
  which comes from a $0$ or $1$ in the left $b+d+c$ columns of $\tau$.
  The labels on the left edge of $\tau$ are $R$s and $Q$s
  the former sitting in the same rows as $L$'s dots, the latter in the same rows as $M$'s. 
  
  Finally, filling the $a$ rows below $\tau$ with lower $K$-tiles
  amounts to filling the $a$ rows above $s_a(\sigma'')$ with $180^\circ$ rotations of them. 
  When doing so, we only place equivariant pieces into the leftmost $b+d$ columns of $\tau$ 
  (i.e., the rightmost $b+d$ columns of $s_a(\sigma'')$)
  since the coordinates in group $a$ have the same weight as those in group $c$, 
  but a different weight from those in groups $b$ and $d$. 
\end{proof}

\subsection{Proof of theorem \ref{thm:PSPipeDreams}}
\label{ssec:DSProof}

This is just a recapitulation of the rest of this section, 
interpreted in terms of DS pipe dreams.

From $\lambda$ and $\mu$, we construct the rotated Richardson variety
in proposition \ref{prop:bigStartingMatrix}, which is $K^S$-homologous
to the direct sum variety of interest. Then we flip and rotate again
to deal with the Richardson variety $\Pi_{\sigma'}$ directly, which to be
equivariant, requires changing the $S$-action. 

Now we can apply theorem \ref{thm:bpartial}, obtaining
$[\Pi_{\sigma'}]$ as a sum over $(a,b)$-partial pipe dreams $P\in \calP$,
which we will take for the lower half of the DS pipe dreams.

Rotate by $a$ and dualize, obtaining bounded juggling patterns
that (by proposition \ref{prop:midsortduality}) are $a$-sorted.
Apply theorem \ref{thm:partialKIP} again with $i=0$,
obtaining each $[\Pi_{\sigma''}]$ as a sum over $(0,a)$-partial pipe dreams.
By proposition \ref{prop:finalarows}), if we flip those 
left-right while exchanging $0 \leftrightarrow 1$,
the bottom of the resulting tiled trapezoid agrees with the top
of the previous tiled trapezoids.

Finally, we need to be sure that the labels on the boundary of the
DS pipe dreams match those specified in theorem \ref{thm:PSPipeDreams};
this is the last part of proposition \ref{prop:finalarows}.

We provide a mnemonic picture explaining the labels 
on the region to be filled by DS pipe dreams. 
The framed region of the juggling pattern of interest gets ripped out, 
its upper half cut off, flipped upside down, and glued back on. 
  \begin{center}
  \begin{tikzpicture}[scale=.34]
  \foreach \offset in {(25,0)}
  {
    \begin{scope}[shift={\offset}]
    \fill[Turquoise] (0,5) -- (2,3) -- (5,3) -- (5,5) -- (0,5);
    \draw (0,5) -- (5,0) -- (5,5) -- (0,5);
    \draw (2,3) -- (5,3);
    \draw (2.7,4) node[] {\tiny$K$};
    \draw (.2,3.8) node[] {\tiny$d$};
    \draw (2.7,1.3) node[] {\tiny$c$};
    \end{scope}
  }
  \foreach \offset in {(20,5),(30,-5)}
  {
    \begin{scope}[shift={\offset}]
    \fill[Melon] (3,2) -- (3,5) -- (5,5) -- (5,0) -- (3,2);
    \draw (0,5) -- (5,0) -- (5,5) -- (0,5);
    \draw (3,2) -- (3,5);
    \draw (4,2.7) node[rotate=-90] {\reflectbox{\tiny$J$}};
    \draw (.5,3.5) node[] {\tiny$a$};
    \draw (3.5,.5) node[] {\tiny$b$};
    \end{scope}
  }
  \foreach \offset in {(30,5),(40,-5)}
  {
    \begin{scope}[shift={\offset}]
    \fill[Gray] (3,2) -- (3,0) -- (0,0) -- (0,5) -- (3,2);
    \draw (0,5) -- (0,0) -- (5,0) -- (0,5);
    \draw (3,2) -- (3,0);
    \draw (1.4,1.5) node[rotate=-90] {\reflectbox{\tiny$J'$}};
    \end{scope}
  }
  \foreach \offset in {(35,0)}
  {
    \begin{scope}[shift={\offset}]
    \fill[Gray] (2,3) -- (0,3) -- (0,0) -- (5,0) -- (2,3);
    \draw (0,5) -- (0,0) -- (5,0) -- (0,5);
    \draw (2,3) -- (0,3);
    \draw (1.8,1.3) node[] {\tiny$K'$};
    \end{scope}
  }
  \draw (25,10) -- (30,10);
  \draw (35,-5) -- (40,-5);
    \draw[ultra thick] (23,7) -- (25,5) -- (30,5) -- (30,7) -- (33,7) -- (30,10) -- (23,10) -- (23,7);
  \begin{scope}[shift={(45,10)}]
    \fill[Melon] (0,0) -- (0,-6) -- (4,-10) -- (4,0) -- (0,0);
    \fill[Turquoise] (4,-10) -- (8,-14) -- (14,-14) -- (14,-10) -- (4,-10);
    \fill[Gray] (14,0) -- (14,-6) -- (20,0) -- (14,0);
    \draw[Gray,ultra thick] (0,0) -- (0,-6) -- (4,-10) -- (14,-10) -- (14,-6) -- (20,0) -- (0,0);
  \fill (1.5,-3) circle (0.25);
  \draw[->] (1.5,-3) -- (.25,-3);      
  \draw[->] (1.5,-3) -- (1.5,-7.2);
  \fill (3,-7) circle (0.25);
  \draw[->] (3,-7) -- (13.75,-7);
  \draw[->] (3,-7) -- (3,-8.7);
  \fill (10,-12) circle (0.25);
  \draw[->] (10,-12) -- (10,-10.35);
  \fill (16,-1) circle (0.25);
  \draw[->] (16,-1) -- (.35,-1);
  \draw[->] (16,-1) -- (16,-3.6);
  \draw (0,-3) node[] {\tiny$R$};
  \draw (1.5,-7.5) node[] {\tiny$R$};
  \draw (3,-9) node[] {\tiny$R$};
  \draw (14,-7) node[] {\tiny$R$};
  \draw (10,-10) node[] {\tiny$0$};
  \draw (16,-4) node[] {\tiny$Q$};
  \draw (0,-1) node[] {\tiny$Q$};
  \end{scope}
  \end{tikzpicture}
  \end{center}
\begin{itemize}
  \item Each dot in $K$ sees a $0$ on the horizontal edge North of it. 
  All other horizontal labels there are $1$s, hence the word $\mu$. 
  \item Each dot in the lower half of $J$ sees an $R$ on the vertical edge East of it. 
  All other labels there are $0$s, hence the word $\lambda_1$. 
  \item Each dot in the upper half of $J$ sees an $R$ on the vertical edge West of it. 
  Each dot in $J'$ sees a $Q$ on the vertical edge West of it. 
  Every row in the upper half contains a dot, hence the word $\lambda_2$.
  \item Each dot in $J$ sees an $R$ on the horizontal edge South of it. 
  Since every column in $J$ contains a dot, there are only horizontal $R$s South of $J$. 
  \item The labels on vertical edges South of $J$ are all $0$s. 
  \item Each dot in $J'$ sees a $Q$ on the horizontal edge South of it. 
  Since only the first few columns on the West of $J'$ contain dots, 
  this makes for a few $Q$s, followed by $0$s, on horizontal edges South of $J$. 
  \item The labels on vertical edges South of $J'$ are all $1$s. 
\end{itemize}

\junk{

Proposition \ref{prop:bigStartingMatrix} implies that $\overleftarrow{X}^\lambda \oplus X^\mu \subseteq \Gr{a+c}{a+b+d+c}$ 
is a good rotated interval positroid variety with offset $m = b$. 
Theorem \ref{thm:goodRotatedIPvariety} therefore says that the classes
$[\overleftarrow{X}^\lambda \oplus X^\mu] = [X^\lambda \oplus X^\mu]$
in $H^\star(\Gr{a+c}{a+b+d+c})$, $H^\star_S(\Gr{a+c}{a+b+d+c})$ and $K^0_S(\Gr{a+c}{a+b+d+c})$, respectively, 
are given by pipe dreams of the skew region from figure \ref{fig:DSskewRegion}. 
Let's translate pipe dreams of this region into pipe dreams as in figure \ref{fig:DSPipeDream}
subject to the restrictions specified there. 
The computation we are about to do holds in equivariant ($K$-)cohomology. 
To pass from this to equivariant ($K$-)homology, we EXPLOIT THE USUAL ISOMORPHISM BETWEEN HERE AND THERE. 
PLEASE WRITE SOMETHING. 

The slice of the partial pipe dream we start with---the union of the slices in the triangle below 
and in the trapezoid above---is drawn in turquoise. 
Labels on the slice are determined by the dots in the region. 
A partition of the set of dots into Northwest to Southeast subsets obviously needs only two parts, hence the letters $R$ and $S$. 

First the triangle below gets filled with tiles. 
Since the dots in the triangle containing the Southernmost $c+d$ rows are Northwest to Southeast and flush North, 
we may revisit an argument from the proof of theorem \ref{thm:goodRotatedIPvariety}:
The only essential conditions in that triangle are in its Northernmost row, 
so they are shift-invariant under the Vakil degenerations from that triangle. 
The tiles in that triangle are therefore unique, so it suffices to start with the slice that cuts off that triangle. 
Figure \ref{fig:DSRegion} shows this shortened slice and its labels. 
The now-irrelevant triangle below is drawn in gray. 

\begin{center}
\begin{figure}[ht]
  \begin{picture}(220,220)
    \multiput(40,180)(10,-10){7}{
      \put(0,0){\line(0,-1){10}}
      \put(0,-10){\line(1,0){10}}
    }
    \put(40,180){\line(0,1){30}}
    \put(40,210){\line(1,0){200}}
    \put(40,180){\line(1,0){170}}
    \put(210,180){\line(0,-1){170}}
    \multiput(210,180)(0,4){8}{\line(0,1){2}}
    \multiput(210,180)(10,10){3}{
      \put(0,0){\line(1,0){10}}
      \put(10,0){\line(0,1){10}}
    }
    {\color{Blue}
    \put(80,140){\line(0,1){70}}
    \put(80,140){\line(1,0){130}}
    \put(110,110){\line(0,1){100}}
    \put(110,110){\line(1,0){100}}
    \put(170,110){\line(0,1){100}}
    \multiput(35,210)(0,-70){2}{\line(-1,0){15}}
    \multiput(35,110)(0,-60){2}{\line(-1,0){15}}
    \put(35,10){\line(-1,0){15}}
    \put(23,172){$a$}
    \put(23,122){$b$}
    \put(23,77){$d$}
    \put(23,27){$c$}
    {\color{Orange}
    \multiput(20,210)(0,-30){2}{\line(-1,0){15}}
    \put(8,192){$b$}
    {\color{Purple}
    \multiput(225,205)(-10,-20){2}{\circle*{4}}
    \put(40,205){\vector(1,0){182}}
    \put(40,185){\vector(1,0){172}}
    \put(225,190){\vector(0,1){12}}
    {\color{Red}
    \multiput(85,195)(10,-20){2}{\circle*{4}}
    \put(105,125){\circle*{4}}
    \put(40,195){\vector(1,0){42}}
    \put(210,175){\vector(-1,0){112}}
    \put(210,125){\vector(-1,0){102}}
    \put(85,130){\vector(0,1){62}}
    \put(95,120){\vector(0,1){52}}
    \put(105,110){\vector(0,1){12}}
    {\color{Gray}
    \multiput(110,110)(10,-10){10}{
      \put(0,0){\line(0,-1){10}}
      \put(0,-10){\line(1,0){10}}
    }
    \multiput(115,105)(30,-20){2}{\multiput(0,0)(10,-10){2}{\circle*{4}}}
    \multiput(185,65)(10,-10){2}{\circle*{4}}
    \put(125,110){\vector(0,-1){12}}
    \put(145,110){\vector(0,-1){22}}
    \put(155,110){\vector(0,-1){32}}
    \put(185,110){\vector(0,-1){42}}
    \put(195,110){\vector(0,-1){52}}
    \multiput(130,85)(10,-10){2}{\vector(1,0){12}}
    \multiput(150,65)(10,-10){2}{\vector(1,0){32}}
    {\color{Turquoise}
    \linethickness{.5mm}
    \multiput(40,170)(10,-10){6}{\line(1,0){10}}
    \put(100,110){\line(1,0){110}}
    \put(210,180){\line(0,-1){70}}
    \multiput(210,180)(10,10){3}{\line(1,0){10}}
    \put(40,180){\line(0,1){30}}
    \thinlines
    {\color{Black}
    \multiput(36,181)(0,20){2}{$S$}
    \put(36,191){$R$}
    \multiput(211.5,176)(10,10){2}{$S$}
    \put(232,196){0}
    \multiput(207,131)(0,10){4}{0}
    \multiput(206,121)(0,50){2}{$R$}
    \put(207,111){0}
    \multiput(132,106)(40,0){2}{\multiput(0,0)(30,0){2}{1}}
    \multiput(112,106)(10,0){2}{0}
    \multiput(142,106)(40,0){2}{\multiput(0,0)(10,0){2}{0}}
    \multiput(42,166)(10,-10){4}{1}
    \multiput(80,126)(10,-10){3}{$R$}
    }
    }
    }
    }
    }
    }
    }
  \end{picture}
  \caption{\ldots and its trimmed version}
  \label{fig:DSRegion}
\end{figure}
\end{center}

Proposition \ref{prop:bigStartingMatrix} says that the gray triangle in figure \ref{fig:DSRegion} 
is identical to its counterpart in the juggling pattern of $X^\mu$. 
When reading the labels on the Southern horizontal part of the slice (minus its Westernmost edge) left to right, 
we get the bit string of $\mu$. 
This explains all but the Westernmost label on the Southern end of the DS pipe dream in figure \ref{fig:DSPipeDream}.

The same proposition also says that the red dots in figure \ref{fig:DSRegion} make for an upright version of the trapezoid 
between rows 1 and $b$ and West of column $a+b$ in the juggling pattern of $\lambda$. 
Reading the pattern of dots and blanks from bottom to top, putting down a $0$ when meeting a dot and a $1$ when meeting a blank, 
we obtain the bit string of $\lambda$. 
The Eastern vertical part of the slice, read bottom to top, 
encodes the first $a$ letters of the same bit string up to relabeling $0 \mapsto R$, $1 \mapsto 0$. 
This explains the labels $\lambda_1$ of the DS pipe in the cited figure. 

For the labels $\lambda_2$, we have to distinguish two cases. 
The simpler one is $a \geq b$, as shown in the figure. 
Here $\lambda_2$ encodes the last $b$ letters of the bit string $\lambda$ up to relabeling $0 \mapsto R$, $1 \mapsto S$.
The slightly more intricate case is $a \geq b$, in which the first $a - b$ letters of $\lambda_2$ 
start out with the relabeling $0 \mapsto R$, $1 \mapsto 1$, 
followed by the last $b$ letters with the relabeling $0 \mapsto R$, $1 \mapsto S$. 

As for the horizontal labels in the West, $R$s are found on edges which see red dots above them. 
This makes for $b$ East-aligned $R$s; all other horizontal labels in the West are 1. 
As for the horizontal labels in the the East, $S$s are found on edges which see purple dots above them. 
Their number equals the number of $s$ on the Western vertical part of the slice, 
which in turn equals the number of rows in the Northernmost $b$ rows not taken by red dots. 

Finally, the specialization statement in theorem \ref{thm:goodRotatedIPvariety} (ii) implies that tiles 
need only be placed in the shaded region of figure \ref{fig:DSPipeDream}, 
since the torus $T'$ of choice is $S$ scaling only the first $a+c$ columns of matrices defining points in $\Gr{a+c}{a+b+d+c}$. 
}

\end{document}